\newcommand{\eps}{\varepsilon}
\newcommand{\commentout}[1]{}
\newcommand{\ff}{\mathfrak f}
\newcommand{\bbE}{\mathbb{E}}
\newcommand{\bbP}{\mathbb P}
\newcommand{\bbV}{V}
\newcommand{\la}{\lambda}
\newcommand{\jed}{\textbf{1}}
\newcommand{\jj}{\mathbf{j}}
\newcommand{\ba}{\mathbf{a}}
\newcommand{\bb}{\mathbf{b}}
\newcommand{\bA}{\mathbf{A}}
\newcommand{\fa}{\mathfrak{a}}
\newcommand{\aipq}{a_\jj}
\newcommand{\bipq}{b_\jj}
\newcommand{\baraipq}{a_\jj}
\newcommand{\barbipq}{b_\jj}
\newcommand{\partchia}{\chi^{(\ep)}_{q,a_i}}
\newcommand{\partchib}{\chi^{(\ep)}_{q,b_i}}
\newcommand{\tpartchiaB}{\tilde\chi_{q,a_i}}
\newcommand{\tpartchibB}{\tilde\chi_{q,b_i}}
\newcommand{\lambab}{\fa}
\newcommand{\cZ}{\mathcal{Z}}
\newcommand{\Xep}{X^{(\ep)}}
\newcommand{\Vep}{\bbV^{(\ep)}}
\newcommand{\tep}{\frac{t}{\ep^2}}
\newcommand{\bbW}{W}
\newcommand{\bbU}{U}
\newtheorem{twr}[theorem]{Theorem}
\newtheorem{twrg}{Theorem$\vphantom{1}^\star$}[section]
\newtheorem{lem}[theorem]{Lemma}
\newtheorem{Propozycja}[theorem]{Proposition}
\newtheorem{corollary}[theorem]{Corollary}
\newtheorem{definicja}[theorem]{Definition}
\newtheorem{remark}[theorem]{Remark}
\newcommand{\B}{{\@Bbb B}}
\newcommand{\C}{{\@Bbb C}}
\newcommand{\F}{{\@Bbb F}}
\renewcommand{\P}{{\@Bbb P}}
\newcommand{\Q}{{\@Bbb Q}}
\newcommand{\bQ}{{\@Bbb Q}}
\newcommand{\N}{{\@Bbb N}}
\newcommand{\R}{{\mathbb R}}
\newcommand{\W}{{\@Bbb W}}
\newcommand{\al}{\alpha}
\newcommand{\si}{\sigma}
\newcommand{\Om}{\Omega}
\newcommand{\ep}{\varepsilon}
\newcommand{\cA}{\@s A}
\newcommand{\cC}{\@s C}
\newcommand{\cD}{\@s D}
\newcommand{\cF}{\@s F}
\newcommand{\cG}{\@s G}
\newcommand{\cI}{\@s I}
\newcommand{\cJ}{\@s J}
\newcommand{\cK}{\@s K}
\newcommand{\cL}{\@s L}
\newcommand{\cN}{\@s N}
\newcommand{\cM}{\@s M}
\newcommand{\cO}{\@s O}
\newcommand{\cP}{\@s P}
\newcommand{\cR}{\@s R}
\newcommand{\cS}{\@s S}
\newcommand{\cT}{\@s T}
\newcommand{\cW}{\@s W}
\newcommand{\cX}{\@s X}
\newcommand{\cY}{\@s Y}
\newcommand{\sigipq}{\sigma_\jj}
\newcommand{\bma}{\@bm a}
\newcommand{\bmb}{\@bm b}
\newcommand{\bmc}{\@bm c}
\newcommand{\bmd}{\@bm d}
\newcommand{\bme}{\@bm e}
\newcommand{\bmf}{\@bm f}
\newcommand{\bmg}{\@bm g}
\newcommand{\bmh}{\@bm h}
\newcommand{\bmi}{\@bm i}
\newcommand{\bmj}{\@bm j}
\newcommand{\bmk}{\@bm k}
\newcommand{\bml}{\@bm l}
\newcommand{\bmm}{\@bm m}
\newcommand{\bmn}{\@bm n}
\newcommand{\bmo}{\@bm o}
\newcommand{\bmp}{\@bm p}
\newcommand{\bmq}{\@bm q}
\newcommand{\bmr}{\@bm r}
\newcommand{\bms}{\@bm s}
\newcommand{\bmt}{\@bm t}
\newcommand{\bmu}{\@bm u}
\newcommand{\bmw}{\@bm w}
\newcommand{\bmv}{\@bm v}
\newcommand{\bmx}{\@bm x}
\newcommand{\bx}{\@bm x}
\newcommand{\bmy}{\@bm y}
\newcommand{\bmz}{\@bm z}
\newcommand{\bbN}{\mathbb N}
\newcommand{\by}{\@bm y}
\newcommand{\bmzero}{\@bm 0}
\newcommand{\ga}{\gamma}
\newcommand{\gA}{\@g A}
\newcommand{\gD}{\@g D}
\newcommand{\gJ}{\@g J}
\newcommand{\gF}{\@g F}
\newcommand{\gM}{\@g M}
\newcommand{\gR}{\@g R}
\newcommand{\bbR}{\R}
\newcommand{\mbR}{\mathbb{R}}
\newcommand{\bbC}{\mathbb{C}}
\begin{document}
\author{Tomasz Komorowski\thanks{Polish Academy of Science, ul. \'{S}niadeckich 8
00-656 Warszawa}\\Tymoteusz Chojecki\thanks{Institute of Mathematics,  UMCS,
pl. Marii Curie-Sk\l odowskiej 1, 20-031, Lublin}}

\title{Homogenization of an advection equation with locally stationary
 random coefficients}
\maketitle
\begin{abstract}
In the  paper we consider the solution of an advection equation with rapidly changing coefficients
$\partial_t u_\eps+(1/\eps)V(t/\eps^{2},x/{\eps})\cdot\nabla_x
u_\eps=0$ for $t<T$ and  $u_\eps(T,x)=u_0(x)$, $x\in\bbR^d$. Here $\eps>0$ is some small parameter and the drift term $\left(V(t,x)\right)_{(t,x)\in \bbR^{1+d}}$ is assumed to be  a $d$-dimensional, vector valued random field with incompressible spatial realizations.
We prove that when the field is  Gaussian, locally stationary, quasi-periodic in the $x$ variable and strongly mixing in time  the solutions $u_\eps(t,x)$ converge in law, as $\eps\to0$, to $  u_0(x(T;t,x))$, where  $\left(x(s;t,x)\right)_{s\ge t}$ is a diffusion satisfying $x(t;t,x)=x$. The averages of $u_\eps(T,x)$ converge then to the solution of the corresponding Kolmogorov backward equation.
\end{abstract}
\numberbysection

\section{Introduction}

In the present paper, we consider solutions of linear advection equations with rapidly oscillating
random coefficients
of the form
\begin{equation}\label{e.maineq}
\begin{aligned}
&\partial_t u_\eps(t,x)+\frac{1}{\eps}V\Big(\frac{t}{\eps^2},\frac{x}{\eps}\Big)\cdot\nabla_x
u_\eps(t,x)=0,  \\
&u_\eps(T,x)=u_0(x), \quad  t<T, x\in\R^d.
\end{aligned}
\end{equation}
Here, $\left(V(t,x)\right)_{(t,x)\in\bbR^{1+d}}$ is a random, zero-mean, incompressible, Gaussian,
vector-valued random field and $\eps>0$. We are interested in the diffusive scaling limit of
the solutions, as the parameter $\eps$ tends to $0$. Equation
\eqref{e.maineq} appears e.g. in  the {\em passive scalar} model  that
  describes a concentration of particles drifting in a time-dependent, incompressible
  random  flow
  and has applications in both turbulent diffusion and stochastic homogenization, see e.g. \cite{krama,kraichnan,warhaft,sreenivasan} and the references therein.
The model has been extensively studied, both in the mathematics and physics  literature, under various assumptions
on the advection term $V(t,x)$. 
A typical result states that, if the field is stationary and sufficiently strongly mixing, then   the underlying random characteristics  (that correspond to the trajectory realizations of the drifting particle) converge in law to
a zero mean Brownian motion $(\beta_t)_{t\ge0}$ whose  covariance matrix $[a_{p,q}]_{p,q=1,\ldots,d}$ is determined by the statistics of $V(\cdot,\cdot)$, see e.g. \cite{Prof,bp,cf,carmona-xu,fk,kola,kom}. 
In that case the laws of the solutions of \eqref{e.maineq}
 converge, as $\eps\to0$, to $u_0(x+\beta_{T-t})$. Its expectation
$\bar u(t,x)$  satisfies
\begin{equation}
\label{bheat-11}
\begin{aligned}
&\partial_t \bar u(t,x)+\frac12\sum_{p,q=1}^d a_{pq}\partial_{x_p,x_q}^2\bar
u(t,x)=0,\quad t\le T,
\\
&\bar u(T,x)=u_0(x).
\end{aligned}
\end{equation}
Since the coefficients of  equation \eqref{bheat-11} do not depend on the spatial variable, the limiting procedure is sometimes  referred to as homogenization.
Stationarity and ergodicity of the velocity field play a   crucial role in substantiating the existence of the limit in homogenization, as  the argument relies
 on an application of some form of an ergodic
theorem.
%

The main purpose of the present article is to investigate the
situation when the coefficients of the advection equation
\eqref{e.maineq} are no longer  stationary. We  assume instead that  the velocity
can be written as $V(t,x,\eps x)$, for some random vector field $V(t,x,y)$, where
for a fixed $y$ the field
 is assumed to be stationary and ergodic in the variables $(t,x)$. 
  The variable $y$ represents a 'slow' parameter i.e. when
 $\ep\ll 1$ then the statistics of the field $V(t,x,\eps x)$
 suffer a significant change only when
$|x|\sim1/\ep$.
For technical reasons we shall also assume that $V(t,x,y)$ is quasi-periodic in the $x$ variable. A more precise description of the fields considered in
the paper
is given in Section \ref{mod1}.
In our  main result,  see Theorem \ref{glowne} below, we show that, as
 $\ep\to0$,  the limit of $u_\eps(t,x)$, in the law, is given by $u_0(x(T;t,x))$, where  $\left(x(t;s,x)\right)_{t\ge s}$ is the diffusion, starting at $s$ at position $x$ with the generator given by the differential operator defined in \eqref{gen-diff}. Then  $\bar u(t,x)$ - the  expectation  of $u_0(x(T;t,x))$ - is the
 solution of the respective  Kolmogorov backward parabolic equation 
\begin{equation}
\label{bheat-11a}
\begin{aligned}
&\partial_t \bar u(t,x)+\sum_{p=1}^d B_{p}(x)\partial_{x_p}\bar
u(t,x)+\frac12\sum_{p,q=1}^d A_{pq}(x)\partial_{x_p,x_q}^2\bar
u(t,x)=0,\quad t< T,
\\
&\bar u(T,x)=u_0(x)
\end{aligned}
\end{equation}
with the respective coefficients appearing in the definition of the generator.

Homogenization  of parabolic  and elliptic equations with locally periodic coefficients has been considered in
Chapter 6 of the
book \cite{BPL}. The generalization to the case of random  parabolic
equations in divergence
form with locally stationary and ergodic coefficients,  has been done in
\cite{Rodes}. An anologous question in  the case of difference
equations in  divergence form in dimension one
has been considered in
\cite{Olla-Siri}. The notion of local ergodicity used in ibid. differs
from the one in \cite{Rodes} and is conceptually closer to the one considered in the present paper.
Homogenization
 of linear
parabolic equations  in  non-divergence form with non-stationary
coefficients has been treated in \cite{boko}. Somewhat related problem of averaging with two scale (fast and slow) motion,
but under a scaling different from ours,
 has been
also considered in the literature, see e.g. \cite{Kifer,Kelly} and
references therein. 

 Our proof is based on 
an analysis of the asymptotics of the random characteristics corresponding to the advection equation  \eqref{e.maineq}.
We apply  the corrector method to eliminate the large amplitude terms that arise in the description of the characteristics. This requires showing  regularity
of the correctors with respect to the parameter that corresponds to the slow variable of the velocity field.
{In Section \ref{pochodna} we prove several results
  concerning the regularity properties of the corrector, which seem to
  be of independent interest. They are obtained by a technique
based on an application of the Malliavin calculus, which is related to the method used in \cite{hm}
to establish asymptotic strong Feller property for the solutions of
stochastic Navier-Stokes equations in two dimensions. It is essentially the only place in our argument that requires the hypothesis of quasi-periodicity of the flow.}  To show the existence of the limit (in law) of the processes corresponding to the random characteristics we apply an averaging lemma, see Lemma \ref{ergodyczne} below, which is a version of a suitable ergodic theorem.

{ The organization of the paper is as follows. In Section
\ref{piaty} we present more detailed description of the
model, which we are going to study and formulate some of its basic properties. The main result, see   Theorem  \ref{glowne} below, is formulated in Section  \ref{ProcLagran}.  Its proof is contained in Section \ref{sec5}. For the notational convenience we conduct the argument only for dimension $d=2$. 
Section \ref{sec4} contains a detailed description of the two dimensional case. It is clear from our proof that it can be easily generalized to the  case of an arbitrary dimension. However this can be done at the expense of a considerably heavier notation, see Section \ref{sec5.5} for the discussion of the general dimension situation.  Finally, Sections \ref{pochodna} -- \ref{sec8} are devoted to showing  some technical results needed for the proof of our main theorem. 

\subsubsection*{Acknowledgment}
This work was partially supported by the grant 346300 for IMPAN from the Simons Foundation and the matching 2015-2019 Polish MNiSW fund.
Both authors acknowledge the support of the National Science Centre:
NCN grant 2016/23/B/ST1/00492. {T.K. thanks prof. S. Peszat for
stimulating discussions
on the subject of the paper.}

\section{Preliminaries}

\commentout{
In this section we recall some concepts and
we introduce a notation.
\subsection{Basic notation} By $\mbR,\bbC$ we denote the sets of real and
complex numbers and by $*$ complex conjugation. Denote by $B_R(z)$
the open ball in normed vector space $(E,||\cdot||)$ with center in $z$
and radius $R>0$, i.e. set
$$
B_R(z):=\{x\in E: ||x-z||<R\}.
$$
In case, when $z=0$ we write $B_R$ instead of $B_R(0)$. By $\bar
B_R(z)$ we denote the closer of $B_R(z)$.

Usually $d$ will denotes the dimension. By $\mathcal{B}({\mathcal{E}})$ we will denote a Borel $\sigma$-field over metric
space ${\mathcal{E}}$. By $\mathcal{M}_{m\times n}$ we denote the
family of $m\times n$ dimensional real matrices. When coordinates of
matrices are complex, then we write $\mathcal{M}_{m\times n}(\bbC)$.
By $\mathcal{S}_{d,+}(\bbC)$ we denote a set of Hermitian, nonnegative
definite, $d\times d$ dimensional, complex matrices.

Usually we denote the probability space  by
$(\Omega,\mathcal{F},\bbP)$, where $\mathcal{F}$ is a $\sigma$-field
 over the set $\Omega$. The appropriate expected value we denote by
$\bbE$.

\textbf{Vector product} of $d-$dimensional vectors
$a=(a_1,\dots,a_d),b=(b_1,\dots,b_d)$ we define as antisymmetric,
 $d\times d$ dimensional matrix
  \begin{equation}\label{wek}a\times
b=[a_lb_m-a_mb_l]_{l,m=1,\ldots,d}.\end{equation} Whereas a
\textbf{scalar product} of $d-$dimensional vectors $a,b\in\bbC^d$ we
denote by
$$
 a\cdot b=\sum_{l=1}^da_l b_l^*.
$$
By the \textbf{Kronecker delta} $\delta_{x,y},\ x,y\in\mbR$ we
understand
\begin{equation}
\delta_{x,y}=\left\{\begin{array}{ccc}1, &\textrm{when} & x=y,\\
0, & \textrm{when} & x\neq y.\end{array}\right.
\end{equation}

\subsection{Some functional spaces and a priori estimates for elliptical equations}

\subsubsection{Some functional spaces}\label{sobolewy} Let ${\cal
O}\subset\mbR^d$ be an open set and $k\in\bbN_0$. $C^k(\cal{O})$ is
 a space of functions which has $k$ continuous derivatives. In particular $C^0(\cal{O})$,
 denoted also by $C(\cal{O})$,
is a space of continuous functions over a set $\cal{O}$. By
$C^k_b(\cal{O})$ we denote the space of bounded functions with
 $k$ continuous derivatives which are bounded. By $C^k_0(\cal{O})$ we understood space of functions with compact
 support, with
$0\leq k\leq+\infty$ continuous derivatives.

Let $p \geq1 $ and $(\cal{O},\mathcal{B}(\cal{O}), \mu)$ will denote
space with $\sigma$-finite measure, such that $\mu(dx)=w(x)dx$,
where $w(x)\in C_b(\mbR^d)$. Let $L^0(\mu)$ would be the set of
equivalence class in the family of all complex measurable functions
on $\cal{O}$ under the equivalence relation $f\sim g$ iff the set
$\{x \in {\cal O}: f(x)\neq g(x)\}$ has $\mu$-measure zero. Space
$L^p({\cal{O}},\mu)$ is defined as follows
$$
L^p({\cal{O}},\mu):=\left\{ f\in L^0(\mu):\
\int_{\cal{O}}|f(x)|^pd\mu(x)<\infty\right\}
$$
and the norm
$$
||f||_{L^p({\cal{O}},\mu)}:=\left(\int_{\cal{O}}|f(x)|^pd\mu(x)\right)^{1/p}.
$$
It is a Banach space for $p\geq1$. When measure $\mu$ is the
Lebesgue measure, then we denote the norm by
$||\cdot||_{p,\cal{O}}$, and the space by $L^p(\cal{O})$.
In case when $\cal{O}=\mbR$ or $\mbR^d$ we will omit writing a
symbol $\cal{O}$ in the norm designation.

$L^{\infty}({\cal{O}},\mu)$  denotes space of almost everywhere bounded functions
i.e. complex functions such that
 $\|f\|_{\infty}:= {\mbox{ess sup}}_{x\in {\cal
O}}|f(x)|<+\infty
$, where $\mbox{ess sup}$ is an essential supremum.

By $L^p_{loc}({\cal O}), p\geq1$ we denotes set of those $f\in
L^0(\mu)$, such that
$$
\int_K|f(x)|^pd\mu(x)<+\infty,
$$
for all compact sets  $K\subset {\cal O}$.

 In the Hilbert space
 $L^2(\mu)$ we define the scalar product as follows. Let $f,g\in L^2(\mu)$ then
$$
\langle f,g\rangle_{\mu}:=\int_{\mbR}f(x)g^*(x)\mu(dx).
$$

Let $g:\mbR^d\to\bbC$ and  multi-index
$\alpha=(\alpha_1,\dots,\alpha_d)$ with nonnegative, integer coordinates.
\begin{definicja}
$D^{\alpha}g$ is weak,  $\alpha$ order, derivative of a function  $g\in
L^1_{\textrm{loc}}(\cal{O})$  iff
$$
\int_{\cal{O}}D^{\alpha}g\phi dx=(-1)^{|\alpha|}\int_{\cal{O}}g
D^{\alpha}\phi dx,\quad \phi\in C^{\infty}_0(\cal{O}),
$$
where $
D^{\alpha}\phi:=\partial_{x_1}^{\alpha_1}\ldots\partial_{x_d}^{\alpha_d}\phi,
$ with designation $D^0\phi:=\phi$.
\end{definicja}

Recall the definition of the Sobolev space (see \cite{Adams}).
\begin{definicja}
For $p\in[1,\infty)$ and $m\in \bbN_0$, by $W^{m,p}(\cal{O})$
we denote Sobolev space of elements $g\in
L^p(\cal{O})$, which satisfies
$$
||g||_{m,p,\cal{O}}^p:=\sum_{|\alpha|\leq
m}||D^{\alpha}g||^p_{L^p(\cal{O})}<+\infty,
$$
where $|\alpha|=\sum_{i=1}^d\alpha_i$.
\end{definicja}

In case when $\cal{O}=\mbR$ or
$\mbR^d$, we will omit designation of a set.  We can also consider the case when
 $p=\infty$. Then
\begin{equation}\label{displej}
\|g\|_{m,\infty,\cal{O}}:=\sum_{|k|\le m}\mbox{ess sup}_{|x|\in
\cal{O}}|D^kg(x)|.
\end{equation}
In case when $m=0$ we write
\begin{equation}\label{d3}
\|g\|_{\infty,\cal{O}}:=\|g\|_{0,\infty,\cal{O}}.
\end{equation}
If $\cal{O}$ is a ball $B_R$, $R>0$, then the norms in
\eqref{displej},\eqref{d3} we denote respectively by
\begin{equation}\label{dysplej2}
\|g\|_{m,\infty}^{(R)}:=\|g\|_{m,\infty,B_R},\quad
\|g\|_{\infty}^{(R)}:=\|g\|_{\infty,B_R}.
\end{equation}
\begin{definicja}\label{schwartzdef}
Function  $f$ belongs to the \textbf{class of Schwartz functions}
$\mathcal{S}(\mbR^d)$, if $f:\mbR^d\to\bbC$ satisfies $f\in
C^{\infty}(\mbR^d)$ and
$$
\sup_{x\in\mbR^d}(1+|x|^2)^n|D^kf(x)|<+\infty,
$$
for all $n\geq0$ and multi-index $k$.
\end{definicja}
\subsubsection{A priori estimates for solution of a parabolic equations}
In this section we present a priori estimates which will be useful
in the next part of the paper. Our result is a generalization of
Theorem 9.11 from \cite{Gilbar} p. 235. We estimate Sobolev norms
$W^{m,p}$ on a ball with radius  $R$ for uniformly elliptic
equations and study how constans in this estimates depends on radius
and the rate of growth of the coefficients.


Assume that for $R\geq1$ we have complex function
$u\in W^{2,p}(B_R)$, $1<p<\infty$. Let
\begin{equation}\label{gtr}
f:=Lu+cu,
\end{equation}
where $c$ is a polynomial of order $k$ with complex coefficients, $k\in\bbN$, and $L$ is the differential operator given by
\begin{equation}\label{pam}
Lu(x)=\sum_{j,l=1}^da_{jl}\partial^2_{x_jx_l}u(x)+\sum_{l=1}^db_l(x)\partial_{x_l}u(x).
\end{equation}
Real functions $b_l(x),\ l=1,\dots,d$ are polynomials of order $k$. Coefficients of $a_{jl},\ j,l=1,\dots,d$
are real and constant, matrix $[a_{jl}]$ is {\em
symmetric}, i.e  $a_{jl}=a_{lj}$, $j,l=1,\dots,d$, nonnegative definite i.e. there exists $\lambda>0$ such that
\begin{equation}
\label{ajl} \sum_{j,l=1}^da_{jl}\xi_j\xi_l\geq\lambda|\xi|^2,\quad
\xi\in\mbR^d.
\end{equation}
Then with the above assumptions we have the following result.
\begin{twr}\label{apriori}
For any $1\leq p<+\infty$ there exists $C>0$ such that
\begin{equation}\label{twier}
||u||_{2,p,B_{R/2}}\leq
C\left(||f||_{p,B_{R}}+R^{2k}||u||_{p,B_R}\right),
\end{equation}
for all $R\geq1$ and complex valued function $u\in W^{2,p}(B_R)$, where $f$ is given by \eqref{gtr}.
\end{twr}
The proof of this theorem is a modification of the proof of Theorem 9.11 from \cite{Gilbar}.

\subsubsection{Some facts about semigroup theory}

\begin{twrg}[\cite{Ethier}, Proposition 3.3]
Let $A$ be a generator of strongly continuous semigroup of
contractions $(P_t)_{t\geq0}$ over $E$. Let $D_0$ and $D$ be a dense
subsets of $E$, such that $D_0\subset D\subset D(A)$, (usually
$D_0=D$). If $P_t(D_0)\subset D,\ t\geq 0$, then $D$ is a core for
$A$.
\end{twrg}

\subsection{Elements of Probability Theory and Stochastic Processes }
\subsubsection{Gaussian measures} Let $E$  be a separable Banach space. By $\mathcal{B}(E)$ we denote
$\sigma-$field of Borel sets over this space.
We say that a  measure $\mu$ defined on $(E,\mathcal{B}(E))$ is \textbf{Gaussian} iff distributions of any linear functional  $h\in E^*$-  space of bounded functionals over $E$, treated as random variable over
$(E,\mathcal{B}(E),\mu)$,  is given by Gaussian measures over
$(\mbR,\mathcal{B}(\mbR))$. If additionally distribution of any  $h\in
E^*$ is symmetric (zero mean) and Gaussian over
$\mbR$, then $\mu$ is called \textbf{symmetric Gaussian measure}.

Following result ensures the existence of
$\int_{E}\exp\{\lambda|x|^2\}\mu(dx)$, if $\lambda>0$ is
sufficiently small.

\begin{twr}[Borell-Fernique-Talagrand \cite{Zabczyk} Theorem 2.6, p.
37]\label{BFT}\mbox{}\newline Let $\mu$ be symmetric Gaussian measure over  $(E,\mathcal{B}(E)).$ Let $\lambda,R>0$
such that,
$$
\log\left(\frac{1-\mu(\bar B_R)}{\mu(\bar B_R)}\right)+32\lambda
R^2\leq-1.
$$
Then
$$
\int_Ee^{\lambda\|x\|^2}\mu(dx)\leq e^{16\lambda
R^2}+\frac{e^2}{e^2-1}.
$$
\end{twr}
\subsubsection{Tight family of distributions}
In this section we show a criterion for tightness of family of random elements. Assume that $(\mathcal{E},d)$ is a Polish space, i.e. complete and separable metric space.
\begin{definicja}
Family of random elements $\{\Xep\}_{\ep\in(0,1]}$ with values in
$\mathcal{E}$, is called \textbf{tight with} $\ep\to0$, when for any sequence
$\ep_n\to0,\ n\to+\infty$  distributions of sequence $\left(X^{(\ep_{n})}\right)$ are tight in sense of \cite{Bilingsley} p.
37.
\end{definicja}

\begin{lem}\label{ciasLem}
Assume that we have two families of random elements
$\{X^{(\ep)}\}_{\ep>0}$, $\{Y^{(\ep)}\}_{\ep>0}$ with values in the Polish space $(\mathcal{E},d)$. Assume also that distributions of $Y^{(\ep)}$ are
tight when $\ep\to 0$ and for any $\delta,T>0$ we have
\begin{equation}\label{ciasnosc}
\lim_{\ep\to0}\bbP\left(d(X^{(\ep)},Y^{(\ep)})\geq\delta\right)=0,
\end{equation}
then distributions of $X^{(\ep)}$ are tight. Moreover if $\mu_*$ is a limit of distributions
$(Y^{(\ep)})$, $\ep\to0$, then it is also a limit of distributions $(X^{(\ep)})$, $\ep\to0$.
\end{lem}
\begin{proof}
We use the Prokhorov Theorem (\cite{Ethier}, Theorem
2.2, p. 104). Since the family of the processes  $Y^{(\ep)}$ is tight when $\ep\to0$ we know that from any sequence of elements of this family we can choose a convergent subsequence  $(Y^{(\ep_n)})_{n\in\bbN},\
\lim_{n\to\infty}\ep_n=0$, which is weak convergent to the element $Y$. We show that from  $\eqref{ciasnosc}$ follows that the sequence $X^{(\ep_n)}$ is also weak convergent to $Y$. From Theorem 3.1, \cite{Ethier}, p.
108, we know, that weak convergence of a sequence  $(Y^{(\ep_n)})_{n\in\bbN}$ to
$Y$ is equivalent to the fact that for any uniform continuous function $f$ we have
$$
\lim_{n\to\infty}\bbE f(Y^{(\ep_n)})=\bbE f(Y).
$$
We show that
$$
\lim_{n\to\infty}\bbE f(X^{(\ep_n)})=\bbE f(Y).
$$
We choose an arbitrary $\rho>0$. For chosen $\rho$ we take $\delta$
such that $|f(x)-f(y)|<\rho$ for $x,y$ such that $d(x,y)<\delta$.
Then
\begin{align}
&\left|\bbE f(X^{(\ep_n)})-\bbE
f(Y^{(\ep_n)})\right|\leq\bbE\left|f(X^{(\ep_n)})-f(Y^{(\ep_n)})\right|\\
&\leq \bbE\left(\left|f(X^{(\ep_n)})-f(Y^{(\ep_n)})\right|,
d(X^{(\ep_n)},Y^{(\ep_n)})\geq\delta\right)\nonumber\\
&+\bbE\left(\left|f(X^{(\ep_n)})-f(Y^{(\ep_n)})\right|,
d(X^{(\ep_n)},Y^{(\ep_n)})<\delta\right)\leq
2||f||_{\infty}\bbP(d(X^{(\ep_n)},Y^{(\ep_n)})\geq\delta)+\rho.
\end{align}
Using $\eqref{ciasnosc}$ we can see that above expression tends, when $n\to\infty$, to $0$. It ends the proof.

\end{proof}
\subsubsection{Markov Processes} Assume that we have given a stochastic process $\{\eta(t),\ t\geq0\}$ with values in $\mathcal{E}$, defined over probabilistic space
$(\Omega,\mathcal{F},\bbP)$. Denote by $\{\mathcal{F}_t,\
t\geq0\}$ the natural filtration of the above process. We say that it is
\textbf{time homogeneous Markov process}, if there exists a family of probabilistic Borel measures $\{P_t(x,\cdot),\ t\geq0,
x\in\mathcal{E}\}$, called a \textbf{semigroup of transition probabilities}, such that,
\begin{itemize}
\item[1)]$P_0(x,\cdot)=\delta_x$,
\item[2)]map $x\mapsto P_t(x,A)$ is Borel measurable for any $A\in\mathcal{B}(\mathcal{E})$ and $t\geq0$,
\item[3)]$$\int_{\mathcal{E}} P_t(x,dy)P_s(y,A)=P_{t+s}(x,A)$$ for any $x\in\mathcal{E},
s,t\geq0$ and $A\in\mathcal{B}(\mathcal{E})$ (Chapman-Kolmogorov equations),
\end{itemize}
and for any bounded, Borel measurable function
$F:\mathcal{E}\to\mbR,\ t,h\geq0$
$$
\bbE[F(\eta(t+h))|\mathcal{F}_t]=\int_{\mathcal{E}}
F(y)P_h(\eta(t),y)dy.
$$
Distribution $\mu$ of a random element $\eta(0)$ over $\mathcal{E}$ is called a
\textbf{starting distribution}. If $\mu=\delta_x(\cdot),\
x\in\mathcal{E}$, we say that the process is starting from the point  $x$. Usually we denote this process by $(\eta^x(t))$.

Recall also a definition of the semigroup of Markov operators.
\begin{definicja}
Let $B_b(\mathcal{E})$ be the Banach space with norm
$$\|F\|_\infty:=\sup_{x\in\mathcal{E}}|F(x)|,$$ we define a family of operators $P_tF(x):=\int_{\mathcal{E}}F(y)P_t(x,dy)$. It has the following properties
\begin{itemize}
\item[i)]$P_t:B_b(\mathcal{E})\to B_b(\mathcal{E})$ is a linear operator which satisfies $P_t1=1,\ P_tF\geq0$ for any $F\geq0$
(such operators are called Markov),
\item[ii)]$P_0=I$,
\item[iii)](Chapman-Kolmogorov equations) $P_tP_s=P_{t+s}$ for
all $t,s\geq0.$
\end{itemize}
Such family is called a \textbf{transition semigroup}.
\end{definicja}

\begin{definicja}
\textbf{Invariant measure} $\nu$ of the Markov semigroup
$(P_t)_{t\geq0}$ is a Borel probabilistic measure  $\nu$, which satisfies
$$
\int_{\mathcal{E}}
P_tF(x)\nu(dx)=\int_{\mathcal{E}}F(x)\nu(dx),\quad t\geq0,F\in
B_b(\mathcal{E}).
$$
\end{definicja}

Markov process is called \textbf{stationary}  if a starting distribution $\mu$ is invariant.  Then for all
$t_1,\dots,t_n,h\in\mbR$ distributions
\begin{equation}\label{stac}\left(\eta(t_1+h),\dots,\eta(t_n+h)\right)\textrm{ and
 }\left(\eta(t_1),\dots,\eta(t_n)\right)\textrm{ are the same.}
\end{equation}
Generally process $\{\eta(t),t\in\mbR\}$ (not necessarily Markov) with property \eqref{stac} is called \textbf{stationary}.

 If the measure $\nu$ is invariant then the semigroup $(P_t)_{t\geq0}$ extends to a semigroup of
\textbf{Markov operators} over $L^p(\nu),\ p\geq1$, i.e. it satisfies the following properties
\begin{itemize}
\item[1)]$P_tF\geq0,\quad F\geq0,t\geq0,$
\item[2)]$P_t\jed=\jed,$
\item[3)]$||P_t||_{L^p}\leq1\quad p\in[1,+\infty]$.
\end{itemize}
\begin{definicja}
Assume that $(P_t)_{t\ge0}$ is a semigroup of Markov operators over
$L^p(\nu)$ for some $1\leq p\leq +\infty$, where $\nu$ is an invariant Borel measure on ${\mathcal{E}}$. We say that the stationary process $\{\eta(t),t\geq0\}$, with values in ${\mathcal{E}}$,
has the \textbf{Markov property with respect to the semigroup} $(P_t)_{t\geq0}$ on
$L^p(\nu)$, where $\nu$ is a distribution of $\eta(0)$, if the following equality holds
\begin{equation}\label{Markow2}
\bbE\left[F(\eta_{t+h})|\mathcal{V}_t\right]=P_hF(\eta_t),\quad
h\geq0,t\geq0,\ F\in\mathcal{B}_b(\mathcal{E}),
\end{equation}
where $\mathcal{V}_t:=\sigma(\eta_s,s\leq t)$ is the natural filtration of the process.
\end{definicja}
\subsubsection{Spectral gap}
We say that the Markov semigroup $(P_t)_{t\geq0}$ on $L^p(\nu)$, where
$p\in(1,+\infty)$, has the \textbf{spectral gap property}, if there exists $\lambda_0>0$ such that
\begin{equation}\label{przerSpec2}
\|P^tF\|_{L^p(\nu)}\leq e^{-\lambda_0t}\|F\|_{L^p(\nu)},\quad F\in
L^p(\nu), \int_{\mathcal{E}}Fd\nu=0,\ t\geq0.
\end{equation}
\begin{remark}Observe that
\begin{itemize}
\item[1)] in a special case $p=2$ it can be shown, see Theorem 1.10
p. 302 \cite{Engel}, that $(P_t)_{t\geq0}$ has the spectral gap property iff, there exists $\lambda_0>0$ such that
\begin{equation}\label{przerSpec}
\lambda_0\left[\int_{\mathcal{E}}F^2d\nu-\left(\int_{\mathcal{E}}Fd\nu\right)^2\right]\leq-\langle
LF,F\rangle_{\nu},\quad F\in D(L),
\end{equation}
where $L$ is the generator of the semigroup $(P_t)_{t\geq0}$ on $L^2(\nu)$.
\item[2)]  If for some  $p_0\in(1,+\infty)$ we have
\eqref{przerSpec2} then by the interpolation it can be shown that this estimate holds for all   $p\in(1,+\infty)$.
\end{itemize}
\end{remark}

\subsubsection{It\^{o} Stochastic Differential}\label{0245} Assume that a probability space with a filtration $(\Omega,{\cal
F}, ({\cal F}_t) ,\bbP)$ is given. We say that the filtration $({\cal F}_t)$
fulfils \textbf{usual conditions}, when it is right-continuous i.e.
$$
{\cal F}_t={\cal F}_{t+}:=\bigcap_{s>t}{\cal F}_s,
$$
and when ${\cal F}_0$ contains all $\bbP$-negligible sets i.e. all
sets of measure $0$ and their subsets. Assume that all filtrations
which occurs below satisfy usual conditions. Recall the following
definitions.
\begin{definicja}
Stochastic process $\{X_t,\ t\geq0\}$ is called
\textbf{adapted} to $(\mathcal{F}_t)_{t\geq0}$, if for all $t\geq0$, $X_t$ is a $\mathcal{F}_t$-measurable random variable.
\end{definicja}
\begin{definicja}
Stochastic process $\{X_t,\ t\geq0\}$ is called 
\textbf{progressively measurable} with respect to filtration
$(\mathcal{F}_t)_{t\geq0}$, if for all  $t\geq0$, the function
$(s,\omega)\to X_s(\omega)$ treated as a function acting from
$[0,t]\times\Omega$ into $\mbR^d$ is measurable with respect to
$\sigma$-field ${\cal B}([0,t])\otimes{\cal F}_t$.
\end{definicja} Brownian motion $\{w_t,t\geq0\}$ is called \textbf{non-anticipative} with respect to $({\cal
 F}_t)_{t\ge0}$ if it is adapted to this filtration and for all
 $t\geq0$ process $\{w_{t+s}-w_t,\ s\geq0\}$ is independent from ${\cal
 F}_t$.

Assume the we have given a standard, $n$-dimensional Brownian motion
$w_t=(w_t^{(1)},\dots,w_t^{(n)})$ non-anticipative with respect to the filtration  $({\cal F}_t)$. Moreover let
$\{\sigma_t=[\sigma_t^{(ij)}],\ t\geq0\}$ be a process with values in  $d\times n$ matrices, with progressively measurable entries which satisfy
$$
\bbE\int_0^t[\sigma_s^{(ij)}]^2ds<+\infty,\quad
t\geq0,i=1,\dots,d,j=1,\dots,n.
$$
Moreover let $\{b_t=(b_t^{(1)},\dots,b_t^{(d)}),\ t\geq0\}$
be $d$-dimensional process with progressively measurable entries such that
$$
\bbE\int_0^t[b_s^{(i)}]^2ds<+\infty,\quad t\geq0, i=1,\dots,d.
$$
Let $\xi$ be a ${\cal
F}_0$ measurable, $d-$dimensional random vector.
\begin{definicja}
Process ${X_t,\ t\geq0}$ given by
$$
X_t^{(i)}=\xi^{(i)}+\int_0^tb_s^{(i)}ds+\sum_{j=1}^n\int_0^t\sigma_s^{(ij)}dw_s^{(j)},\quad
i=1,\dots,d
$$
is called \textbf{It\^{o} process} with \textbf{drift} $b$ and
\textbf{diffusivity} $\sigma$.
\end{definicja}
Integral over $ds$ is the Lebesgue integral, stochastic integral is understood in the It\^{o} sense. Formula
$$
\left\{\begin{array}{ll} dX_t&=b_tdt+\sigma_tdw_t,\quad
t\geq0,\\
X_0&=\xi_0 \end{array}\right.
$$
is called the \textbf{It\^{o} stochastic differential} of a process $\{X_t,\
t\geq0\}$.
\subsubsection*{Leibniz rule}
If we have two It\^{o} processes $X_{t,1}$ and $X_{t,2}$ given by
$$
X_{t,\ell}=\xi_\ell+\int_0^tb_{s,\ell}ds+\int_0^t\sigma_{s,\ell}dw_s,\quad
\ell=1,2,
$$
where $w_t,b_{t,\ell},\sigma_{t,\ell},\ell=1,2$ satisfy assumptions from
Section \ref{0245}, then we can introduce the following  \textbf{Leibniz rule} (see \cite{Karatzas}, p. 155 formula (3.8))
\begin{equation}\label{Leibnitz}
d(X^{(i)}_{t,1}X^{(j)}_{t,2})=X^{(i)}_{t,1}dX^{(j)}_{t,2}+X^{(j)}_{t,2}dX^{(i)}_{t,1}+\left[\sigma_{t,1}\sigma_{t,2}^T\right]_{ij}dt,\quad
i,j=1,\dots,d.
\end{equation}
\subsubsection{Stochastic equations} Let  $\sigma_{ij},b_i:\mbR^{1+d}\to\mbR$ for
$i=1,\dots,d,$ $j=1,\ldots,r,$ are Borel measurable functions, $r$-dimensional Brownian motion $(w_t)_{t\geq0}$ with natural filtration  $({\cal F}_t)_{t\ge0}$. $\xi_0$ is a ${\cal F}_0$-measurable $d$-dimensional random vector.
Then we can define a
 $d$-dimensional process $\{X_t,\ t\geq0\}$ given by the formula
\begin{equation}\label{stoch2}
X_t^{(i)}=\xi^{(i)}_0+\int_0^tb_i(s,X_s)ds+\sum_{j=1}^r\int_0^t\sigma_{ij}(s,X_s)dw_s^{(j)},\quad
i=1,\dots,d.
\end{equation}

\begin{definicja}\label{mocne}
\textbf{Strong solution} of the stochastic differential equation
\eqref{stoch2} over a given probability space $(\Omega,{\cal
F},\bbP)$ and with a given Brownian motion $(w_t)$ and an initial condition $\xi_0$ is a process $X=\{X_t, t\geq0\}$ with continuous trajectories which satisfies
\begin{itemize}
\item[i)] $X$ is adapted to  $({\cal F}_t)$- natural filtration of $(w_t)$,
\item[ii)] $\bbP[X_0=\xi_0]=1,$
\item[iii)]
$\bbE\left[\int_0^t\left\{|b_i(s,X_s)|+\sigma_{ij}^2(s,X_s)\right\}ds\right]<+\infty$
for all $1\leq i\leq d,\ 1\leq j\leq n, t\geq0$,
\item[iv)] formula \eqref{stoch2} holds.
\end{itemize}
\end{definicja}

Symbolically equation \eqref{stoch2} can be written in a differential form as follows
\begin{equation}\label{stoch}
\left\{\begin{array}{ll} dX_t&=b(t,X_t)dt+\sigma(t,X_t)dw_t,\quad
t\geq0,\\
X_0&=\xi_0 \end{array}\right.
\end{equation}

Basic existence and uniqueness theorem is the following.
 \begin{twrg}[\cite{Karatzas} Theorem 5.2.5, 5.2.9]\label{ist}
 Assume that coefficients $b(t,x)$, $\sigma(t,x)$ are locally Lipschitz continuous
 in a space variable i.e. for any $n\geq1$ there exists a constant $K_n>0$ such that for any $t\geq0, \|x\|\leq
 n$ and $\|y\|\leq n$:
 $$
\|b(t,x)-b(t,y)\|+\|\sigma(t,x)-\sigma(t,y)\|\leq K_n\|x-y\|
 $$
and $\bbE|\xi_0|^2<+\infty$. Then the solution of equation
\eqref{stoch} is unique. If coefficients are globally
Lipschitz-continuous then the solution of  \eqref{stoch} exists globally.
\end{twrg}

\ Now we introduce the concept of a weak solution of the equation
\eqref{stoch}.

\begin{definicja}\label{slabe2}
\textbf{Weak solution} of the equation \eqref{stoch} is a triple $(X,(w_t))$, $(\Omega,{\cal F},({\cal F}_t),\bbP)$, where
\begin{itemize}
\item[i)] $X=\{X_t, t\geq0\}$ is a continuous adapted process with values in $\mbR^d$, and $(w_t)$ is $({\cal F}_t)$ non-anticipative
$r$-dimensional Brownian motion,
\item[ii)] points iii), iv) from Definition \ref{mocne} holds.
\end{itemize}
\end{definicja}
Process $X_t$, which is a weak solution of $\eqref{stoch}$ is called
\textbf{diffusion}.

 If there exists a weak solution and we have the uniqueness of the strong solution, then by
  \cite{ikeda-watanabe} Theorem 1.1,
p.149 we have existence and uniqueness of the strong solution.
\subsubsection{Stroock-Varadhan martingale problem}\label{ProblemMartyn} Recall the definition of the martingale.
\begin{definicja}
Stochastic process $\{M_t,\ t\geq0\}$ which is adapted to $(\mathcal{F}_t)$ is called \textbf{martingale}, if
$\bbE|M_t|<+\infty$ for all $t\geq0$, and for $0\leq
s<t<\infty$ we have $\bbE[M_t|\mathcal{F}_s]=M_s.$
\end{definicja}

Define the following differential operator.
$$
Lf(x):=\frac{1}{2}\sum_{i,j=1}^da_{ij}(x)\partial^2_{x_ix_j}f(x)
+\sum_{i=1}^db_i(x)\partial_{x_i}f(x),
$$
where $a_{ij},b_j:\mbR^{d}\to\mbR, i,j=1,\ldots,d, f\in
C_b^{2}(\mbR^{d})$. Assume that $(X_t)$ fulfills \eqref{stoch}.
From the It\^{o}  formula (see \cite{Karatzas}, Section
3.3) we know, that for any function $f\in C_b^{2}(\mbR^{d})$ process
\begin{equation}\label{martyngal}
M_t(f):=f(X_t)-f(X_0)-\int_0^tLf(X_s)ds
\end{equation}
is a square integrable martingale with continuous trajectories,
if $a=\sigma\sigma^T.$

Recall that on $C([0,+\infty);\mbR^d)$ we have a canonical process
 $$
 x_t(\omega):=\omega(t),\ \omega\in C([0,+\infty);\mbR^d)
 $$
  and the corresponding natural filtration $\{\mathcal{M}_t,\ t\geq0\}$. Define ${\cal M}:=\sigma\left(\bigcup_{t\geq0}{\cal M}_t\right)$.
  It is a Borel $\sigma-$field on $C([0,+\infty);\mbR^d)$, see Section 1.3 of \cite{Stroock-Varadhan}.
  We say that the Borel probability measure
   $Q$ on $(C([0,+\infty);\mbR^d),{\cal M})$ satisfies the
\textbf{martingale problem} corresponding to $L$, if for any $f\in C_b^{2}(\mbR^{d})$ process
$$
M_t(f):=f(x_t)-f(x_0)-\int_0^tLf(x_s)ds
$$
is a martingale on $(C([0,+\infty);\mbR^d),{\mathcal{M}},Q)$
w.r.t. $(\mathcal{M}_t)$. We say that the martingale problem is \textbf{well posed}, if for any Borel, probabilistic measure $\nu$ on $\mbR^d$ there exists a unique measure $Q$ on $(C([0,+\infty);\mbR^d),{\cal M})$ such that
$x_0$ has distribution $\nu$ and $M_t(f)$ is a martingale for any function $f\in C_b^{2}(\mbR^{d})$.
\begin{twrg}[\cite{Stroock-Varadhan} Theorem 7.2.1]\label{lemprob}
If coefficients $ a:\mbR^d\to{\cal S}_{d,+}$ and
$b:\mbR^d\to\mbR^d$ are bounded, measurable functions and for $x\in\mbR^d$ we have
\begin{align*}
&\inf_{\theta\in \mbR^d-\{0\}}\frac{ \theta\cdot
a(x)\theta}{\theta^2}>0,\qquad\lim_{y\to x} ||a(y)-a(x)||=0.
\end{align*}
Then the martingale problem is well posed.
\end{twrg}
Above $||a(x)||:=\sum_{i,j=1}^d|a_{ij}(x)|$.

}
\label{piaty} 


\subsection{Quasi-periodic, locally stationary fields of coefficients}\label{mod1}


Given $\eps>0$ we let $V_\eps=(V_{1,\eps},\ldots,V_{d,\eps}):\bbR^{1+d}\times \Om\to\bbR^d$ be a random,  vector field. Here
$(\Omega,\mathcal{F},\bbP)$ is a probability space.  We let $\bbE$ be the expectation with respect to $\bbP$. To ensure  that
the field has divergence free realizations we let
\begin{equation}
\label{050705}
V_{m,\eps}(t,x)=\sum_{l=1}^d\partial_{x_l}H_{l,m}^{\eps}(t,x),\quad m=1,\ldots,d,
\end{equation}
where $H^\eps(t,x):=[H_{l,m}^\eps(t,x)]_{l,m=1,\ldots,d}$ is a $d\times d$ anti-symmetric matrix
valued random, quasi-periodic field of the form
$H_{l,m}^\eps(t,x)=H_{l,m}(t,x,\eps x)$, with
$$
H_{l,m}(t,x,y)=\sum_{i=1}^N\left[a_{\jj}(t,y)\cos(k_i\cdot
  x)+b_{\jj}(t,y)\sin(k_i\cdot x)\right],\quad (t,x,y)\in\R^{1+2d},
$$ 
where $N$ is fixed natural number and  $\jj$ denotes the multi-index
$(i,l,m)$ made of three components $i=1,\ldots,N$ and $l,m=1,\ldots,d$.
Here we let also  $k_i=(k_{i,1},\ldots,k_{i,d})\in\R^d$.

The random fields
$\left(a_{\jj}(t,y)\right)_{(t,y)\in\bbR^{1+d}}$,
$\left(b_{\jj}(t,y)\right)_{(t,y)\in\bbR^{1+d}}$ for
\begin{equation}
\label{Z}
\jj\in  Z:=\left\{(i,l,m):\ i=1,\dots,N,1\leq l<m\leq d\right\}
\end{equation}
are of the form
\begin{align}\label{roz}\begin{aligned}
&a_\jj(t;y)=\sqrt{2\alpha_\jj(y)}\sigma_\jj(y)\int_{-\infty}^te^{-\alpha_\jj(y)(t-s)}dw_{\jj,a}(s),\\
&b_\jj(t;y)=\sqrt{2\alpha_\jj(y)}\sigma_\jj(y)\int_{-\infty}^te^{-\alpha_\jj(y)(t-s)}dw_{\jj,b}(s).
\end{aligned}\end{align}
Here $w_{\jj,a}(t),w_{\jj,b}(t),\ \jj\in Z$ are  independent, two-sided
 one dimensional standard Brownian motions.
For the indices $\jj=(i,l,m)$, with  $m\ge l$   we  let 
$$
a_{i,l,m}(t;y)=-a_{i,m,l}(t;y),\quad
b_{i,l,m}(t;y)=-b_{i,m,l}(t;y),\quad\,i=1,\ldots,N.
$$
The above implies in particular that
$$
a_{i,m,m}(t;y)=b_{i,m,m}(t;y)\equiv0,\quad m=1,\ldots,d, \,i=1,\ldots,N.
$$
Functions
$\alpha_\jj(\cdot)$, $\sigma_\jj(\cdot)$ are assumed  to belong to
$C^2_b(\mbR^d)$ - the class of twice, continuously differentiable
functions with bounded derivatives and satisfy
\begin{align}\label{gamma}\begin{aligned}
&1/\sigma_*\geq\sigma_{\jj}(y)\geq \sigma_*,\quad
1/\gamma_0\geq\alpha_{\jj}(y)\geq \gamma_0\quad \mbox{ for }y\in\bbR^d
\end{aligned}\end{align}
and some
$\gamma_0,\sigma_*\in(0,1)$.
It is clear from  \eqref{roz} that for each $y$ the processes $\left(a_{\jj}(t,y)\right)_{t\in\bbR}$,
$\left(b_{\jj}(t,y)\right)_{(t,y)\in\bbR}$ are the stationary solutions of the It\^o stochastic differential equations
\begin{equation}\label{Ornstein}
\begin{aligned}
&da_\jj(t;y)=-\alpha_\jj(y)a_\jj(t;y)dt+\sqrt{2\alpha_\jj(y)}\sigma_\jj(y)dw_{\jj,a}(t),\\
&db_\jj(t;y)=-\alpha_\jj(y)b_\jj(t;y)dt+\sqrt{2\alpha_\jj(y)}\sigma_\jj(y)dw_{\jj,b}(t).
\end{aligned}
\end{equation}

\subsection{Markov property of the process }

 The
generator $\mathbb L^y$ of the $\R^{2S}$-valued process ${\frak
  a}(t,y):=\left(a_\jj(t;y),b_\jj(t;y)\right)_{\jj\in Z}$ equals 
 \eqref{Ornstein}
\begin{equation}\label{genOrnstein}
\mathbb L^yF(\lambab)=\sum_{\jj\in
Z}\left(L_{\jj, a_\jj}^y+L_{\jj,b_\jj}^y\right)F(\lambab),\quad F\in C^2(\R^{2S}),\,\fa :=\left(a_\jj,b_\jj\right)_{\jj\in Z}\in\bbR^{2S}.
\end{equation}
Here $S$
denotes the  cardinality of $Z$.
The one dimensional differential operators $L_{\jj,a_\jj}^y$,
$L_{\jj,b_\jj}^y$ act on the $a_\jj$ and $b_\jj$ variables, respectively, with
\begin{align}\label{generatorL}
&L_{\jj,a}^yf(a):=\alpha_\jj(y)\left[\si_\jj^2(y)f''(a)-
a f'(a)\right],\quad  f\in C^2(\R).
\end{align}
The Gaussian product measure 
\begin{equation}\label{miara}
\nu_{*}^y(d\fa)=\prod_{\jj\in
Z}\Phi_{\sigma_\jj(y)}(a_\jj)\Phi_{\sigma_\jj(y)}(b_\jj)d\aipq
d\bipq,
\end{equation}
where for $\sigma>0$
$$
\Phi_{\sigma}(a):=\frac{1}{\sigma\sqrt{2\pi}}\exp\left\{-\frac{a^2}{2\sigma^2}\right\},\quad
a\in\mbR,
$$
is invariant under the dynamics corresponding to the generator $\mathbb L^y$,
i.e.
\begin{equation}
\label{inv}
\int_{\R^{2S}}\mathbb L^yFd\nu_*^y=0
\end{equation}
for any $F\in C^2(\R^{2S})$ of at most polynomial growth.

The following result  is a consequence of 
 Propositions 12.4
and  12.14, part v) of
 \cite{Ksiazka}.
\begin{Propozycja}[Spectral gap property of the generator] Fix $y\in\mbR^d$. The   set ${\cal P}$ of
 polynomials on $\mbR^{2S}$ constitutes a core of $\mathbb L^y$.
 Assume that $F\in {\cal P}$ satisfies
\begin{equation}\label{calka}
\int_{\mbR^{2S}} Fd\nu_{*}^y=0.
\end{equation}
Then,
\begin{equation}\label{spectralgapOU2}
-\langle \mathbb L^y F,F\rangle_{\nu_{*}^y}\geq
\gamma_0||F||_{L^2(\nu_{*}^y)}^2,
\end{equation}
where $\gamma_0$ was introduced in $\eqref{gamma}$.
\end{Propozycja}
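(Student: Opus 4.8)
The plan is to exploit the product structure of both the generator $\mathbb L^y$ and the invariant measure $\nu_*^y$ in order to reduce everything to the one–dimensional Ornstein--Uhlenbeck operator. Throughout, fix $y$ and abbreviate $\alpha_\jj:=\alpha_\jj(y)$, $\sigma_\jj:=\sigma_\jj(y)$. That the set of polynomials ${\cal P}$ is a core of $\mathbb L^y$ is exactly the content of Propositions 12.4 and 12.14(v) of \cite{Ksiazka}, so the only thing to establish is the bound \eqref{spectralgapOU2} for $F\in{\cal P}$ with $\int F\,d\nu_*^y=0$.

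First I would record the spectral picture of a single factor. For $\sigma>0$ the operator $\ell_\sigma f(a):=\sigma^2 f''(a)-af'(a)$ is symmetric on $L^2(\Phi_\sigma)$, has $\Phi_\sigma$ as invariant density (a direct computation, or \eqref{inv} restricted to one coordinate), and the suitably normalised Hermite polynomials $h_n^{(\sigma)}$, $n\ge0$, form an orthonormal basis of $L^2(\Phi_\sigma)$ with $\ell_\sigma h_n^{(\sigma)}=-n\,h_n^{(\sigma)}$ (one checks this on $h_0=1$, $h_1\propto a$, $h_2\propto a^2-\sigma^2$ and in general). Since $L^y_{\jj,a}=\alpha_\jj\ell_{\sigma_\jj}$ by \eqref{generatorL}, and likewise for $L^y_{\jj,b}$, these one–dimensional operators have eigenvalues $-\alpha_\jj n$, $n\ge0$. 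Equivalently, integration by parts gives the Dirichlet form identity $-\langle L^y_{\jj,a}f,f\rangle_{\Phi_{\sigma_\jj}}=\alpha_\jj\sigma_\jj^2\int(f')^2\,d\Phi_{\sigma_\jj}$, so that by the one–dimensional Gaussian Poincar\'e inequality $\mathrm{Var}_{\Phi_{\sigma_\jj}}(f)\le\sigma_\jj^2\int(f')^2\,d\Phi_{\sigma_\jj}$ one obtains the gap $\alpha_\jj$ for a single factor.

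Next I would tensorise. Since $\nu_*^y=\bigotimes_{\jj\in Z}\big(\Phi_{\sigma_\jj}\otimes\Phi_{\sigma_\jj}\big)$ over the $a$– and $b$–coordinates, the products $e_{\mathbf n}:=\prod_{\jj\in Z}h_{n_{\jj,a}}^{(\sigma_\jj)}(a_\jj)\,h_{n_{\jj,b}}^{(\sigma_\jj)}(b_\jj)$, indexed by multi-indices $\mathbf n=(n_{\jj,a},n_{\jj,b})_{\jj\in Z}$ with finitely many nonzero entries, form an orthonormal basis of $L^2(\nu_*^y)$ and satisfy $\mathbb L^y e_{\mathbf n}=-\big(\sum_{\jj}\alpha_\jj(n_{\jj,a}+n_{\jj,b})\big)e_{\mathbf n}$. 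A polynomial $F$ has a \emph{finite} expansion $F=\sum_{\mathbf n}c_{\mathbf n}e_{\mathbf n}$; the condition $\int F\,d\nu_*^y=0$ forces $c_{\mathbf 0}=0$, and for every remaining $\mathbf n\ne\mathbf 0$ some entry is $\ge1$, whence $\sum_{\jj}\alpha_\jj(n_{\jj,a}+n_{\jj,b})\ge\min_\jj\alpha_\jj\ge\gamma_0$ by \eqref{gamma}. Therefore
\begin{equation*}
-\langle\mathbb L^y F,F\rangle_{\nu_*^y}=\sum_{\mathbf n\ne\mathbf 0}\Big(\sum_{\jj}\alpha_\jj(n_{\jj,a}+n_{\jj,b})\Big)|c_{\mathbf n}|^2\ge\gamma_0\sum_{\mathbf n\ne\mathbf 0}|c_{\mathbf n}|^2=\gamma_0\|F\|_{L^2(\nu_*^y)}^2,
\end{equation*}
which is \eqref{spectralgapOU2}. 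Alternatively the same conclusion follows from the explicit Dirichlet form $-\langle\mathbb L^yF,F\rangle_{\nu_*^y}=\sum_{\jj}\alpha_\jj\sigma_\jj^2\int[(\partial_{a_\jj}F)^2+(\partial_{b_\jj}F)^2]\,d\nu_*^y$ combined with the tensorised Gaussian Poincar\'e inequality $\mathrm{Var}_{\nu_*^y}(F)\le\sum_{\jj}\sigma_\jj^2\int[(\partial_{a_\jj}F)^2+(\partial_{b_\jj}F)^2]\,d\nu_*^y$ and $\alpha_\jj\ge\gamma_0$.

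There is no genuine obstacle in this argument; the only points demanding care are bookkeeping ones: checking that the Hermite functions really diagonalise $\ell_\sigma$ with eigenvalues $-n$, noting that polynomials correspond to finite Hermite expansions (so there are no convergence issues and term-by-term application of $\mathbb L^y$ is legitimate), and applying the uniform lower bound $\alpha_\jj(y)\ge\gamma_0$ from \eqref{gamma} coordinate-wise, which is precisely what makes the constant $\gamma_0$ independent both of $y$ and of the number $2S$ of coordinates.
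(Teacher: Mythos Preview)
Your argument is correct and complete. The paper does not supply its own proof of this proposition but simply cites Propositions~12.4 and~12.14(v) of \cite{Ksiazka}; you have instead written out the standard self-contained argument via the Hermite eigenbasis (or, equivalently, the tensorised Gaussian Poincar\'e inequality), which is exactly what those cited results encode.
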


\subsection{Homogeneous fields}

For $x\in\bbR^d$ and  ${\frak
  a}=\left((a_\jj,b_\jj)\right)_{\jj\in Z}$ we let $\tau_x:\bbR^{2S}\to\bbR^{2S}$ by the formula
$ \tau_x( {\frak a})=\left(a_\jj',b_\jj'\right)_{\jj\in Z}$, where
\begin{align}\begin{aligned}\label{pz} &
a'_\jj:=a_\jj\cos(k_i\cdot x)+b_\jj\sin(k_i\cdot x),\\
&  b'_\jj:=-a_\jj\sin(k_i\cdot x)+b_\jj\cos(k_i\cdot x),\quad \jj\in Z.
\end{aligned}\end{align}
  It is easy to check that $\left(\tau_x\right)_{x\in\bbR^d}$ forms a
  group of transformations with $\tau_x\tau_y=\tau_{x+y}$, $x,y\in\bbR^d$.
For the function $G:\mbR^{2N}\times\mbR^{d}\to\mbR$ we denote by $\tilde G:\bbR\times\mbR^{2d}\times\Omega\to\bbR$ the random field 
given by
\begin{equation}\label{tild}
\tilde G(t,x,y):=G(\tau_x\left( {\frak a}(t;y)\right),y),
\end{equation}
where $\left({\frak a}(t;y)\right)$ is the process given by \eqref{roz}.

\commentout{
\subsection{Process of derivatives w.r.t. parameter $y$}
In the next part of our paper we will use processes which are derivatives w.r.t. parameter
$y$ of the processes $a_\jj(\cdot;y),b_\jj(\cdot;y)$. We denote this derivatives for  fixed $n=1,\dots,d,$ as follows
$$a_{\jj,y_n}:=\frac{\partial a_\jj}{\partial{y_n}},\quad b_{\jj,y_n}:=\frac{\partial b_\jj}{\partial{y_n}},\quad \jj\in Z.
$$

Processes $a_{\jj,y_n}, b_{\jj,y_n}$ are semi-martingales which
satisfy following equations:
\begin{equation}\label{OrnsteinGrad}
\begin{aligned}
&da_{\jj,y_n}(t;y)=-\left[\partial_{y_n}\alpha_i(y)\aipq(t;y)+\alpha_i(y)a_{\jj,y_n}(t;y)\right]dt+\sqrt{2\alpha_i(y)}\tilde\sigma_\jj(y)dw_{\jj,a}(t),\\
&db_{\jj,y_n}(t;y)=-\left[\partial_{y_n}\alpha_i(y)\bipq(t;y)+\alpha_i(y)b_{\jj,y_n}(t;y)\right]dt+\sqrt{2\alpha_i(y)}\tilde\sigma_\jj(y)dw_{\jj,b}(t),\\
\end{aligned}
\end{equation}
where
$$
\tilde\sigma_\jj(y):=\partial_{y_n}\sigipq(y)+\frac{\partial_{y_n}\alpha_i(y)}{2\alpha_i(y)}\sigipq(y),
$$
$w_{\jj,a}(t),w_{\jj,b}(t)$  are standard, independent, $1$-dimensional Brownian motions.

Observe that if we differentiate \eqref{roz} by $y$, we obtain
\begin{align}\label{calkia}\begin{aligned}
&a_{\jj,y_n}(t;y)=\sqrt{2\alpha_i(y)}\tilde\sigipq(y)\int_{-\infty}^te^{-\alpha_i(y)(t-s)}dw_{\jj,a}(s)\\\
&-\sqrt{2\alpha_i(y)}\sigipq(y)\int_{-\infty}^t(t-s)\partial_{y_n}\alpha_i(y)e^{-\alpha_i(y)(t-s)}dw_{\jj,a}(s).
\end{aligned}\end{align} We have analogous formulas for $b_{\jj,y_n}(t;y)$.
By $a_{i,y_n},b_{i,y_n}$ we denote corresponding antisymmetric matrices $a_{i,y_n}=[a_{i,y_n}^{(l,m)}]_{l,m}, b_{i,y_n}=[b_{i,y_n}^{(l,m)}]_{l,m}$.

\subsection{Rotation of the field}
Let us introduce the \emph{rotation of the
field} $h:\mbR^{1+2d}\times\Omega\to\mbR^{d^2}$
$$
h(t;x,y)=[h_{l,m}(t;x,y)]_{l,m},\quad l,m=1,\dots,d,
$$
which is quasi-periodic in variable $x$, by formula
\begin{equation}\label{hfunkcja}
h(t;x,y):=\sum_{i=1}^N \left[a_i(t;y)\cos( k_i\cdot x)+b_i(t;y)\sin(
k_i\cdot x)\right],
\end{equation}
where $k_1,\dots,k_n\in \mbR^d.$
Now we introduce the field $\Vep(t,x)$ by formula
\begin{equation}\label{poleVep}
\Vep(t,x):=\nabla_x\cdot h(t;x,\ep x)=W(t,x,\ep x)+\ep U(t,x,\ep x),
\end{equation}
where $\ep>0$,  and
\begin{align}\label{wzory}\begin{aligned}
&W(t,x,y):=\nabla_x\cdot h(t;x,y)=\sum_{i=1}^N b_i(t;x,y)k_i,\\
&U(t,x,y):=\nabla_y\cdot h(t;x,y)=\sum_{i=1}^N\nabla_y\cdot
a_i(t;x,y)=\sum_{i=1}^N\sum_{m=1}^da_{i,y_m}^{(l,m)}(t;x,y).
\end{aligned}\end{align}
We introduce processes
$a_i(t;x,y)=[a_i^{(l,m)}(t;x,y)],b_i(t;x,y)=[b_i^{(l,m)}(t;x,y)],
l,m=1,\dots,d,$  in a following way: for $\jj\in Z$ define
\begin{align}\begin{aligned}\label{obroty}
&\baraipq(t;x,y):=\aipq(t;y)\cos( k_i\cdot
x)+\bipq(t;y)\sin(k_i\cdot
x),\\
&\barbipq(t;x,y):=-\aipq(t;y)\sin(k_i\cdot
x)+\bipq(t;y)\cos(k_i\cdot x).
\end{aligned}\end{align}

Let us also define derivatives w.r.t. $y$ of the above processes. Fix $n=1,\dots,d$, then
\begin{align}\begin{aligned}\label{pochodne}
&a_{\jj,y_n}(t;x,y):=a_{\jj,y_n}(t;y)\cos(k_i\cdot
x)+b_{\jj,y_n}(t;y)\sin(k_i\cdot
x),\\
&b_{\jj,y_n}(t;x,y):=-a_{\jj,y_n}(t;y)\sin(k_i\cdot
x)+b_{\jj,y_n}(t;y)\cos(k_i\cdot x).
\end{aligned}\end{align}
We will denote
\begin{equation}\label{gradgrad}
\partial_y
\baraipq(t;x,y)=\left[a^{(l,m)}_{i,y_n}\right]_{(i,l,m),\jj\in Z, n=1,\dots,d}.
\end{equation}

\begin{Propozycja}
Field $\Vep(t,x)$, given by \eqref{poleVep}, is incompressible i.e.
\begin{equation}\label{niesc}
\nabla_x\cdot\Vep(t,x)\equiv0.
\end{equation}
\end{Propozycja}
\begin{proof}
Observe that using an antisymmetricity of the function
 $h$ (it follows from the fact that matrices $a_i(t;y),b_i(t;y)$ are antisymmetric) we obtain
\begin{align*}
&\nabla_x\cdot\Vep(t,x)\\
&=\sum_{i,j=1}^d\partial^2_{x_ix_j}h_{ij}(t;x,\ep
x)=\sum_{i<j}\partial^2_{x_ix_j}h_{ij}(t;x,\ep
x)+\sum_{i>j}\partial^2_{x_ix_j}h_{ij}(t;x,\ep x)=0.
\end{align*}
\end{proof}
}

\section{Statement of the main result}

\label{ProcLagran}

Our main result concerns the diffusive scaling limit for the random characteristics of \eqref{e.maineq}. They are given by the trajectories of solutions of the ordinary
differential equation with the random right hand side given by the field
$V_\eps(t,x)$, defined in Section \ref{mod1}. More precisely, suppose that
 \begin{equation}\label{dynam22}
\left\{\begin{array}{ll}
&\dfrac{dX_\ep^{s,x_0}(t)}{dt}=V_\eps\left(t,X_\eps^{s,x_0}(t)\right),\\
&\\
&X_\eps^{s,x_0}(s)=x_0,\end{array} \right.
\end{equation}
where $\ep>0$ and $x_0\in\bbR^d$ and $s,t\in\bbR$.
The  diffusively scaled processes
$x_\eps(t;s,x_0):=\ep X_\eps^{s/\eps^2,x_0/\eps}\left(t/\ep^2\right)$ 
satisfy
\begin{equation}\label{poleweup}
\left\{\begin{array}{ll}
&\dfrac{d x_\eps(t;s,x_0) }{dt}=\dfrac{1}{\ep}W\left(\dfrac{t}{\ep^2},\dfrac{x_\eps(t;s,x_0) }{\ep}, x_\eps(t;s,x_0)\right)+U\left(\dfrac{t}{\ep^2},\dfrac{x_\eps(t;s,x_0)}{\ep},x_\eps(t;s,x_0)\right),\\
&x_\eps(s;s,x_0)=x_0,\end{array}\right.
\end{equation}
where $W=(W_1,\ldots,W_d)$, $U=(U_1,\ldots,U_d)$ are given by
$$
W_m(t,x,y)=\sum_{l=1}^d\partial_{x_l}H_{l,m}(t,x,y),\quad U_m(t,x,y)=\sum_{l=1}^d\partial_{y_l}H_{l,m}(t,x,y).
$$
The main result of the present paper can be formulated as follows.
\begin{twr}\label{glowne}
For a given $(s,x_0)\in\bbR^{1+d}$ the processes $\left(x_\eps(t;s,x_0)\right)_{t\geq s}$ converge in law over
$C\left([s,\infty);\mbR^d\right)$, when $\ep\to0$, to the diffusion $\left(x(t;s,x_0)\right)_{t\geq s}$,
which starts at  time $s$ at $x_0$ and whose generator is given by
\begin{align}
\label{gen-diff}
\overline{\mathcal{L}}f(y)=\sum_{l=1}^dB_l(y)\partial_{y_l}f(y)+\frac12\sum_{l,l'=1}^dA_{l,l'}(y)\partial^2_{y_{l,l'}}f(y),
\quad f\in C^2(\mbR^{d}),\ y\in\mbR^d,
\end{align}
where coefficients $B_l(\cdot)$ and $A_{l,l'}(\cdot),\
l,l'=1,\dots,d$ are defined by formulas $\eqref{wspD}$ below.
\end{twr}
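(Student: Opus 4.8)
The plan is to establish convergence in law by combining a corrector construction for the highly oscillatory term with a tightness argument and an identification of the limit via the martingale problem. First I would rewrite the equation \eqref{poleweup} for $x_\eps(t;s,x_0)$ and isolate the dangerous $O(1/\eps)$ contribution $\eps^{-1}W(t/\eps^2, x_\eps/\eps, x_\eps)$. The field $W$ has zero mean (it is built from the mean-zero Ornstein--Uhlenbeck processes $a_\jj,b_\jj$), so one expects this term to homogenize rather than blow up. To make this precise I would introduce a corrector: for the frozen slow variable $y$, solve the cell/Poisson-type problem $\mathbb L^y\chi(\cdot;y) = -W(\cdot;\cdot,y)$ (componentwise), which is solvable on the orthogonal complement of constants thanks to the spectral gap property of $\mathbb L^y$ stated in the Proposition above. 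Applying Itô's formula to $\chi$ evaluated along the trajectory, and using the scaling $t\mapsto t/\eps^2$, $x\mapsto x/\eps$, one converts the singular term into a martingale term of order $O(1)$ plus manageable remainders; this is exactly the corrector method alluded to in the introduction.

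Second, the replacement of $x_\eps$ by the corrected process requires control of the error terms, and here the regularity of $\chi(\cdot;y)$ in the slow parameter $y$ is essential — this is precisely the content of the Malliavin-calculus estimates of Section \ref{pochodna} that I may invoke. Once the corrected process $\tilde x_\eps$ is shown to be uniformly close to $x_\eps$ in probability (uniformly on compact time intervals), Lemma \ref{ciasLem} reduces the problem to proving tightness and identifying the limit of $\tilde x_\eps$. Tightness of $\{\tilde x_\eps\}$ on $C([s,\infty);\mbR^d)$ I would get from the Kolmogorov criterion, using moment bounds on increments that follow from the boundedness assumptions \eqref{gamma}, the Gaussian integrability (Theorem \ref{BFT}), and the corrector estimates; the quasi-periodicity in $x$ keeps all the relevant fields bounded, which is what makes these moment bounds uniform in $\eps$.

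Third, to identify the limit I would show that any subsequential limit $x(\cdot)$ solves the martingale problem for the operator $\overline{\mathcal L}$ of \eqref{gen-diff}. Concretely, for $f\in C_b^2(\mbR^d)$ I would expand $f(\tilde x_\eps(t))$ by Itô's formula, collect the drift and quadratic-variation contributions, and pass to the limit; the drift coefficient $B_l$ and diffusion coefficient $A_{l,l'}$ emerge as time averages of the relevant expressions involving $W$, $U$ and the corrector $\chi$, evaluated at the frozen slow variable and averaged against the invariant measure $\nu_*^y$ — this is where the averaging Lemma \ref{ergodyczne} (a quantitative ergodic theorem for the locally stationary, strongly mixing field) does the work of turning time averages along the fast motion into the deterministic coefficients \eqref{wspD}. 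Well-posedness of this martingale problem — which gives uniqueness of the limit and hence convergence of the full family — follows from Theorem$^\star$ \ref{lemprob}, once I verify that $A(\cdot)$ is continuous and uniformly elliptic and that $A,B$ are bounded; continuity and ellipticity of $A$ are themselves consequences of the corrector regularity in $y$.

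I expect the main obstacle to be the control of the corrector and its $y$-derivatives: unlike the classical stationary case, the slow variable enters the coefficients $\alpha_\jj(y),\sigma_\jj(y)$ and hence the generator $\mathbb L^y$ itself, so $\chi(\cdot;y)$ must be shown to depend smoothly (with quantitative bounds) on $y$, and the Itô expansion of the corrected process produces cross terms involving $\nabla_y\chi$ and $U$ that must be shown negligible or correctly accounted for in $B_l$. Establishing these regularity estimates — the Malliavin-calculus step, the only place quasi-periodicity is truly needed — is the technical heart of the argument; the tightness and martingale-problem identification are then comparatively standard given the ergodic Lemma \ref{ergodyczne}.
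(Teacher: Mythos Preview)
Your outline captures the overall architecture correctly --- corrector for the singular term, averaging lemma for the $O(1)$ terms, tightness, then martingale-problem identification --- but there are two substantive gaps.

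First, the cell problem is posed with the wrong generator. The Ornstein--Uhlenbeck operator $\mathbb L^y$ alone does not see the coupling between the fast spatial oscillation and the OU dynamics. The paper first passes to rotated variables $\tilde{\fa}(t;y):=\tau_{z(t,y)}\fa(t;y)$ along the auxiliary characteristic $z(t,y)$ of \eqref{dynPom}; these satisfy an SDE whose generator is $\mathcal L_y=\mathbb L^y+\mathfrak w\cdot D$ (see \eqref{gen}), and the corrector must solve $-\mathcal L_y\chi_q=\mathfrak w_q$ as in \eqref{011702}. Solving $\mathbb L^y\chi=-W$ would not cancel the $\eps^{-1}$ term in the It\^o expansion, because the chain rule also produces the $x$-gradient of $\chi$ times the velocity --- precisely the $\mathfrak w\cdot D$ piece that has to be built into the cell operator.

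Second, and more seriously, you have not identified the terms that actually generate the drift $B_l$. After the $\chi$-corrector kills the $\eps^{-1}W$ term, the It\^o expansion (see \eqref{chitylda}--\eqref{cWj}) leaves $O(1)$ contributions of the form
\[
W\cdot\chi_{q,a_i}\,a_{i,y}^{(\eps)}\quad\text{and}\quad W\cdot\chi_{q,b_i}\,b_{i,y}^{(\eps)},
\]
where $a_{i,y}^{(\eps)},b_{i,y}^{(\eps)}$ are the $y$-derivatives of the OU processes evaluated along the trajectory (cf.\ \eqref{a-b-y}). These are \emph{not} functionals of $\fa^{(\eps)}$ alone, so the averaging Lemma \ref{ergodyczne} does not apply to them directly. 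The paper handles this via a \emph{second} layer of correctors: one solves the coupled resolvent-type system \eqref{Theduz} for $\Theta^{(1)}_{i,j},\Theta^{(2)}_{i,j}$ (explicitly via the Feynman--Kac formula \eqref{Feynamnkac}), and Lemma \ref{pochodnelem} shows that this converts $a_{i,y_j}^{(\eps)}F_{i,j}^{(\eps)}+b_{i,y_j}^{(\eps)}G_{i,j}^{(\eps)}$ into a functional of $(\fa^{(\eps)},x_\eps)$ plus a negligible remainder. Only after this reduction does the drift $B_q$ emerge, and it contains a nontrivial contribution from the $\Theta$'s (see \eqref{pziv1} and \eqref{wspD}) that your sketch does not account for. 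The ``cross terms involving $\nabla_y\chi$ and $U$'' you mention are in fact harmless (either $O(\eps)$ or directly averageable); the real difficulty sits in the $\nabla_y\fa$ terms.
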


Using the characteristics of \eqref{e.maineq} we can write the
solution in the form
\begin{equation}
\label{extra-2}
u_\eps(t,x)=u_0\left(x_\eps(T;t,x)\right),\quad t\le T,\,x\in\bbR^d.
\end{equation}
As an immediate  corollary of Theorem \ref{glowne} we conclude the
following.
\begin{corollary}\label{glowne1}
Suppose that $u_\eps(t,x)$ is the solution of \eqref{e.maineq} with
$u_0$ that is bounded and continuous. Then, the random variables
$ u_\eps(t,x)$ converge in law, as $\eps\to0$, to 
$u_0(x(T;t,x))$. In particular, $\bar u(t,x):=\lim_{\eps\to0}\bbE u_\eps(x(T;t,x))$  is the
bounded solution of 
\begin{align}
\label{extra-1}
&\partial_t \bar
  u(t,x)+\sum_{l=1}^dB_l(x)\partial_{x_l}\bar u
  (t,x)+\frac12\sum_{l,l'=1}^dA_{l,l'}(x)\partial^2_{x_{l,l'}}\bar u(t,x)=0,\quad t<T,\,x\in\mbR^d,\nonumber\\
  &
  \\
&\bar u(T,x)=u_0(x),\,\nonumber
\end{align}
where coefficients $B_l(\cdot)$ and $A_{l,l'}(\cdot),\
l,l'=1,\dots,d$ are as in Theorem $\ref{glowne}$.
\end{corollary}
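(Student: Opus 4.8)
The plan is to deduce the statement directly from Theorem~\ref{glowne} by the continuous mapping theorem together with elementary facts about non-degenerate diffusions. First I would record that, for fixed $t<T$, the evaluation map $\pi_T\colon C([t,\infty);\mbR^d)\to\mbR^d$, $\pi_T(\omega):=\omega(T)$, is continuous; hence, by Theorem~\ref{glowne} and the continuous mapping theorem, $x_\eps(T;t,x)$ converges in law to $x(T;t,x)$ on $\mbR^d$ as $\eps\to0$. Since $u_0$ is bounded and continuous, the map $\omega\mapsto u_0(\pi_T(\omega))$ is bounded and continuous on $C([t,\infty);\mbR^d)$, so a second application of the continuous mapping theorem gives, via \eqref{extra-2}, that $u_\eps(t,x)=u_0(x_\eps(T;t,x))$ converges in law to $u_0(x(T;t,x))$, which is the first assertion.

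Next I would pass to expectations. Because $|u_\eps(t,x)|\le\|u_0\|_\infty$ uniformly in $\eps$ and $u_0\in C_b(\mbR^d)$, the very definition of weak convergence of $x_\eps(T;t,x)$, applied to the test function $u_0$, yields $\bbE u_\eps(t,x)=\bbE\,u_0\bigl(x_\eps(T;t,x)\bigr)\to\bbE\,u_0\bigl(x(T;t,x)\bigr)$, so the limit $\bar u(t,x)$ exists, is bounded by $\|u_0\|_\infty$, and equals $\bbE\,u_0(x(T;t,x))$. Writing $(P_\tau)_{\tau\ge0}$ for the transition semigroup of the time-homogeneous diffusion generated by $\overline{\mathcal L}$, we thus obtain $\bar u(t,x)=(P_{T-t}u_0)(x)$.

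Finally I would identify $\bar u$ as the bounded solution of \eqref{extra-1}. The coefficients $B_l$ and $A_{l,l'}$ in \eqref{gen-diff}, given explicitly by \eqref{wspD}, are bounded and continuous and $[A_{l,l'}(x)]$ is uniformly positive definite (this being the place where the structure of $V$ from Section~\ref{mod1} is used); by the classical Stroock--Varadhan well-posedness criterion for martingale problems the diffusion with generator $\overline{\mathcal L}$ is then uniquely determined, $(P_\tau)$ is a Feller semigroup, and $\bar u$ is the unique bounded function of the form $(P_{T-t}u_0)(x)$. Using parabolic regularity for non-degenerate operators with continuous coefficients one gets $\bar u\in C^{1,2}\bigl((-\infty,T)\times\mbR^d\bigr)$ with $\partial_t\bar u+\overline{\mathcal L}\bar u=0$ for $t<T$, while the terminal condition $\bar u(t,\cdot)\to u_0$ as $t\uparrow T$ follows from $x(t;t,x)=x$, the pathwise continuity of the diffusion and dominated convergence; uniqueness in the class of bounded solutions is again a consequence of the well-posedness of the martingale problem (equivalently, of the parabolic maximum principle). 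The only point requiring any real work here is the verification that the coefficients produced in \eqref{wspD} are genuinely uniformly elliptic and continuous; the rest is routine, which is why the result is stated as a corollary.
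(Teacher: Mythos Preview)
Your proof is correct and is precisely the argument the paper has in mind: the corollary is stated there as ``an immediate corollary of Theorem~\ref{glowne}'' with no proof given, and your derivation via the representation \eqref{extra-2}, the continuous mapping theorem, and the well-posedness of the martingale problem (already established in Section~\ref{identyf} for the proof of Theorem~\ref{glowne}) is exactly the intended route. One small remark: you assert that $[A_{l,l'}(x)]$ is \emph{uniformly} positive definite, whereas the paper only uses pointwise non-degeneracy (see the Remark following the corollary and the invocation of Theorem~7.2.1 of \cite{Stroock-Varadhan} in Section~\ref{identyf}); this weaker property suffices for Stroock--Varadhan well-posedness and for the parabolic regularity you need, so the argument goes through unchanged.
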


\begin{remark}
{{\em 
Note that the random variables  $u_\eps(t,x)$, given by \eqref{extra-2}, do not  become deterministic in the limit, as $\eps\to0$.
According to Corollary  \ref{glowne1},  they converge in law to
$u_0(x(T;t,x))$. The diffusion $x(T;t,x)$ is not  deterministic   as its diffusivity matrix $[A_{l,l'}(x)]$, given by \eqref{wspD} below, is non-degenerate. Indeed, if otherwise one would conclude easily from  \eqref{wspD} that the gradient of the respective corrector field, defined in \eqref{011702j}, would vanish.  This, in turn would contradict
the fact that the right hand side of \eqref{011702j}  is non-trivial. }}

{\em The present case should be contrasted with the situation when $u_\eps(t,x)$ satisfies an advection-diffusion equation 
\begin{equation}
\label{advec-11a}
\begin{aligned}
&\partial_t u_\eps(t,x)+\dfrac1\eps V\left(\dfrac{t}{\eps^2},\dfrac{x}{\eps}\right)\cdot\nabla_x u_\eps(t,x)+\kappa\Delta_xu_\eps(t,x)=0,\\
&u_\eps(T,x)=u_0(x),
\end{aligned}
\end{equation}
with a non-zero ellipticity constant $\kappa>0$. This case has been considered  in \cite{Rodes}, under somewhat differently formulated property of local stationarity of the random drift. 
One can show that then for any $t\le T$ the random variables   
$u_\eps(t,x)$
converge, in probability, to a deterministic limit $\bar u(t,x)$ given by    the solution of \eqref{extra-1} with appropriate coefficients.
Contrary to the present case of advection equation \eqref{e.maineq}, the diffusion term in (\ref{advec-11a}) provides enough extra averaging so that the limit becomes deterministic.}
\end{remark}




\section{Prelude to the proof of Theorem \ref{glowne}}

\label{sec4}

\subsection{Two dimensional case} To simplify the notation we shall assume that $(s,x_0)=(0,0)$. In that case we shall write 
$x_\eps(t):=x_\eps(t;0,0)$. To further lighten up the notation we shall present the
argument for the case $d=2$. Then, the antisymmetric matrix $H(t,x,y)$
can be described by a scalar field and, as a result, this allows to reduce
the multi-index $\jj$ to just a scalar index $i\in\{1,\ldots,N\}$. The
case of an arbitrary dimension $d$ requires the same consideration,
however the argument will be obscured by some heavy notation.

In this case velocity field $V_\eps=(V_{\eps,1},V_{\eps,2})$ is given
by the formula $V_\eps(t;x)=\nabla_x^\perp H_\eps(t;x)$, with
$\nabla_x^\perp:=[\partial_{x_2},-\partial_{x_1}]$  and
$H_\eps(t;x):=H(t,x,\eps x)$, where
\begin{align}\begin{aligned}\label{wzory2}
&H(t,x,y)=\sum_{i=1}^N[a_i(t;y)\cos(k_i\cdot
x)+b_i(t;y)\sin(k_i\cdot x)].
 \end{aligned}\end{align}
The processes $a_i(t,y)$ and $b_i(t,y)$ are described by \eqref{roz},
with the multi-index $\jj$ replaced by $i$. The formulas
\eqref{genOrnstein} and \eqref{miara} for  the generator and the
invariant measure are modified in an obvious fashion, with $S=N$.


We can write $V_\eps(t,x)=V(t,x,\eps x)$ with
\begin{equation}
\label{V}
V(t,x,y)=W(t,x,y)+\eps U(t,x,y),
\end{equation}
where
\begin{align}\begin{aligned}\label{wzory3}
&W(t,x,y)=\sum_{i=1}^Nk_i^\perp[-a_i(t;y)\sin(k_i\cdot
x)+b_i(t;y)\cos(k_i\cdot x)],\\
&U(t,x,y)=\sum_{i=1}^N[a_{i,y}^\perp(t;y)\cos(k_i\cdot
x)+b_{i,y}^\perp(t;y)\sin(k_i\cdot x)].
 \end{aligned}\end{align}
We shall denote 
\begin{equation}
\label{k-perp}
k_i^\perp:=(k_{i,2},-k_{i,1})
\end{equation}
 for
$k_i=(k_{i,1},k_{i,2})$ and  $a_{i,y}^\perp(t;y):=\nabla^\perp_ya_i(t;y)$, and likewise
for $b_{i,y}^\perp(t;y)$.

\subsection{Auxiliary dynamics}

For any $y\in\mbR^d$ we consider the auxiliary dynamics
$z(t,y)$ given by
\begin{equation}
\begin{aligned}\label{dynPom}
&\frac{dz(t,y)}{dt}= W\left(t,z(t,y),y\right),\\
&z(0,y)=0.
\end{aligned}
\end{equation}
Define $\left(\tilde\fa(t;y)\right)_{t\ge0}$ an $\R^{2N}$-valued
process, given by $\tilde\fa(t;y):=\left(\tilde a_i(t;y),\tilde
  b_i(t;y)\right)_{i=1,\ldots,N}$, with
\begin{align}
&\tilde a_i(t;y):=a_i(t;y)\cos(k_i\cdot
z(t,y))+b_i(t;y)\sin(k_i\cdot z(t,y)),\nonumber\\
&\tilde b_i(t;y):=-a_i(t;y)\sin(k_i\cdot
z(t,y))+b_i(t;y)\cos(k_i\cdot z(t,y)).\label{tyldab}
\end{align}
 Equation \eqref{dynPom} can be rewritten in the form
\begin{equation}
\begin{aligned}
&\frac{dz(t,y)}{dt}=\sum_{i=1}^N\tilde b_{i}\left(t,y\right)k_i^\perp,\\
&z(0,y)=0,
\end{aligned}
\end{equation}
For any $k=(k_1,k_2),\ell=(\ell_1,\ell_2)\in \R^2$ we let
\begin{align}\label{kij}
\delta(k,\ell)=k\cdot \ell^\perp=k_1\ell_2-k_2\ell_1.
\end{align}
A simple application of It\^o formula shows that  the components of $\tilde\fa(t;y)$
satisfy the following It\^o stochastic differential equation
\begin{equation}\label{dyf2}
\begin{aligned}
&d\tilde a_i(t)=\left\{-\alpha_i\tilde
a_i(t)+\sum_{j=1}^N\delta(k_{i},k_j)\tilde b_j(t)\tilde
b_i(t)\right\} dt+\sqrt{2\alpha_i}\sigma_i d\tilde w_{i,a}(t),\\
 &d\tilde
b_i(t)=\left\{-\alpha_i\tilde
b_i(t)-\sum_{j=1}^N\delta(k_{i},k_j)\tilde b_j(t)\tilde
a_i(t)\right\} dt+\sqrt{2\alpha_i}\sigma_i d\tilde
w_{i,b}(t),\quad i=1,\ldots,N.\\
\end{aligned}\end{equation}
Here $\tilde w_{i,a}$, $\tilde w_{i,b}$, $i=1,\ldots,N$ are
i.i.d. standard, one dimensional Brownian motions. To shorthen the
notation we have omitted writing the argument $y$. Let $ \fa\in\bbR^{2N}$. We denote by
$\tilde\fa^\fa(t)=\left(\tilde a_i^\fa(t),\tilde b_i^\fa(t)\right)$
the solution of \eqref{dyf2} satisfying $\tilde\fa^\fa(0)=\fa$.


 The generator of the diffusion \eqref{dyf2} is given by 
\begin{equation}
\mathcal{L}_yF=\mathbb L^yF+\mathfrak{w}\cdot D F
\label{gen}, \quad F\in C^2\left(\mbR^{2N}\right),
\end{equation}
where
\begin{equation}\label{obrot}
D F:=\sum_{i=1}^Nk_i\mathcal{R}_iF,\qquad
\end{equation}
with
the differential operator
\begin{equation}\label{Rj}
\mathcal{R}_i F:=\left( b_i\partial_{a_i}-
a_i\partial_{b_i}\right)F,\qquad F\in C^2(\mbR^{2N}).
\end{equation}
The mapping $\mathfrak{w}=(\mathfrak{w}_1, \mathfrak{w}_2):\bbR^{2N}\to\bbR^2$ is given by
\begin{equation}\label{polew}
\mathfrak{w}(\fa)=\sum_{i=1}^Nk_i^{\perp}b_i,\quad \fa=(a_i,b_i)_{i=1,\ldots,N}.
\end{equation}
Obviously the components of $\mathfrak{w}$ belong to 
$L^p(\nu_*^y)$ for any $p\in[1,+\infty)$.   Define also
\begin{equation}\label{obrot-p}
D^\perp F:=\sum_{i=1}^Nk_i^\perp\mathcal{R}_iF.
\end{equation}

Let $(P_t^{y})_{t\geq0}$ be the transition semigroup of the diffusion
given by  \eqref{dyf2}. 
For any $p\in[1,+\infty)$
and a natural number $m$ let
$W^{p,m}(\nu_{*}^y)$  be the Sobolev space made of those
$F\in L^p(\nu_*^y)$ whose partials of order $m$ are $L^p$ integrable
with respect to $\nu_*^y$ equipped with the norm
$$
\|F\|_{W^{p,m}(\nu_{*}^y)}:=\left\{\sum_{\ell=0}^m\|\nabla_{\frak a}^\ell F\|_{L^p(\nu_{*}^y)}^p\right\}^{1/p}.
$$
Here $\nabla_{\frak a}^\ell $ denotes the $\ell$-th order, derivative
tensor with respect to the variable ${\frak a}
$.
By $W^{p,m}(\bbR^{2N})$,   $W^{p,m}_{loc}(\bbR^{2N})$ we denote
correspondingly the standard Sobolev space with
respect to the ''flat'' Lebesgue measure and the space of functions
that belong to the respective Sobolev space on any ball.

  Define the set
\begin{equation}\label{C0}
\mathcal{C}_0:=\bigcap_{p\geq 1}W^{p,2}(\nu_{*}^y).
\end{equation}
A simple calculation shows that
\begin{equation}
\label{012706-18}
\int_{\R^{2N}}\mathfrak{w}\cdot D F G d\nu_*^y =-\int_{\R^{2N}}F\mathfrak{w}\cdot D G d\nu_*^y
\end{equation}
for any $F,G\in \mathcal{C}_0$. As a
result from \eqref{inv} and \eqref{012706-18} (with $G\equiv1$) we
conclude that
\begin{equation}
\label{inv1}
\int_{\R^{2N}}\mathcal{L}_yF d\nu_*^y=0
\end{equation}
and, thanks to \eqref{spectralgapOU2},
\begin{equation}
\label{inv2}-\langle {\cal
L}_yF(\cdot;y),F(\cdot;y)\rangle_{\nu_*^y}\geq\gamma_0\|F(\cdot;y)\|^2_{L^2(\nu_{*}^y)},\quad
y\in\mbR^2,\mbox{ provided that }\int_{\R^{2N}}F d\nu_*^y=0
\end{equation}
for any $F\in \mathcal{C}_0$. In particular, from \eqref{inv1} we
obtain that $\nu_*^y$ is an invariant measure for the dynamics
described by \eqref{dyf2}. In consequence, 
\begin{equation*}
\int_{\R^{2N}}P_t^{y} F d\nu_*^y=\int_{\R^{2N}} F d\nu_*^y,\quad t\ge 0,\,y\in\bbR^2,
\end{equation*}
for any bounded and measurable $F$.
The semigroup extends therefore to a Markovian contraction $C_0$-semigroup on
$L^p(\nu_*^y)$ for any $p\in[1,+\infty)$. 
We have the following.
\begin{Propozycja}[see \cite{Ksiazka} Section 12.3]\label{Lem542}
${\cal C}_0$ is a core of the generator ${\cal L}_y$ of the semigroup
$(P_t^{y})_{t\geq0}$. On this set, the generator coincides with the differential operator
 given by \eqref{gen}.
\end{Propozycja}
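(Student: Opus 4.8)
The plan is to verify the hypotheses of the abstract core criterion quoted above (Proposition~3.3 of \cite{Ethier}): I shall exhibit a dense subspace $D_0\subseteq\mathcal{C}_0\subseteq D(\mathcal{L}_y)$ that is left invariant by every $P_t^y$, and then conclude that $\mathcal{C}_0$ is a core for $\mathcal{L}_y$ on $L^p(\nu_*^y)$, $p\in[1,+\infty)$. A convenient choice is $D_0=\mathcal{P}$, the space of polynomials on $\mbR^{2N}$: since $\nu_*^y$ is Gaussian, $\mathcal{P}$ together with all its partial derivatives lies in every $L^p(\nu_*^y)$, so $\mathcal{P}\subseteq\mathcal{C}_0$, and $\mathcal{P}$ is dense in $L^p(\nu_*^y)$ for every finite $p$.

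First I would show that $\mathcal{C}_0\subseteq D(\mathcal{L}_y)$ and that on $\mathcal{C}_0$ the operator $\mathcal{L}_y$ agrees with $\mathbb{L}^y+\mathfrak{w}\cdot D$. For $F\in\mathcal{P}$ this follows from the It\^o formula applied to $F\big(\tilde\fa^\fa(t)\big)$: the coefficients of \eqref{dyf2} are polynomials, and by \eqref{tyldab} the radial variables satisfy $\tilde a_i(t)^2+\tilde b_i(t)^2=a_i(t)^2+b_i(t)^2$, i.e.\ they coincide with those of the stationary Ornstein--Uhlenbeck system \eqref{Ornstein}, so $\tilde\fa^\fa(t)$ has moments of all orders, bounded uniformly for $t$ in compact intervals and growing at most polynomially in $|\fa|$. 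Hence It\^o's formula gives $t^{-1}\big(P_t^yF-F\big)=t^{-1}\int_0^tP_s^y(\mathcal{L}_yF)\,ds\to\mathcal{L}_yF$ in $L^p(\nu_*^y)$ as $t\downarrow0$, using strong continuity of $(P_s^y)$ together with $\mathcal{L}_yF\in L^p(\nu_*^y)$. For general $F\in\mathcal{C}_0$ one approximates $F$ by smooth functions $F_n$ simultaneously in every $W^{p,2}(\nu_*^y)$ (cutting off and mollifying); since the coefficients of $\mathbb{L}^y$ and of $\mathfrak{w}\cdot D$ are polynomials, hence in every $L^q(\nu_*^y)$, H\"older's inequality shows $\mathcal{L}_yF_n\to\mathbb{L}^yF+\mathfrak{w}\cdot DF$ in $L^p(\nu_*^y)$, and the claim follows from the closedness of the generator.

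The heart of the argument is to show $P_t^y(\mathcal{P})\subseteq\mathcal{C}_0$. Writing $P_t^yF(\fa)=\E\big[F(\tilde\fa^\fa(t))\big]$ and differentiating under the expectation, $\nabla_\fa P_t^yF(\fa)=\E\big[\nabla F(\tilde\fa^\fa(t))\,J_t(\fa)\big]$ and $\nabla_\fa^2 P_t^yF(\fa)=\E\big[\nabla^2F(\tilde\fa^\fa(t))(J_t\otimes J_t)\big]+\E\big[\nabla F(\tilde\fa^\fa(t))\,K_t(\fa)\big]$, where $J_t=\nabla_\fa\tilde\fa^\fa(t)$ and $K_t=\nabla_\fa^2\tilde\fa^\fa(t)$ solve the first and second variational equations of \eqref{dyf2}. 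These are linear equations whose coefficients grow at most linearly in $\tilde\fa^\fa(t)$ (the drift of \eqref{dyf2} being quadratic, its second differential is constant). Here the representation \eqref{tyldab} is essential: it exhibits the flow as the composition of the \emph{linear} Ornstein--Uhlenbeck flow with the planar rotations by the angles $k_i\cdot z(t,y)$, whose angular velocities are linear combinations of the $\tilde b_j$, so that the amplification is driven only by the Gaussian radial variables; a Gronwall estimate then bounds all moments of $J_t(\fa)$ and $K_t(\fa)$ by quantities growing at most exponentially in $|\fa|$, hence lying in $L^q(\nu_*^y)$ for every $q$. Combining these bounds with the polynomial growth of $\nabla F,\nabla^2F$ and with the moment bounds on $\tilde\fa^\fa(t)$, the Cauchy--Schwarz inequality shows that $P_t^yF$ and its first two derivatives have at most exponential growth in $|\fa|$, and therefore $P_t^yF\in\mathcal{C}_0$.

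Granted the three ingredients above --- $\mathcal{P}$ dense, $\mathcal{P}\subseteq\mathcal{C}_0\subseteq D(\mathcal{L}_y)$, and $P_t^y(\mathcal{P})\subseteq\mathcal{C}_0$ for all $t\ge0$ --- the core criterion quoted above yields at once that $\mathcal{C}_0$ is a core for $\mathcal{L}_y$, while the identification of $\mathcal{L}_y$ with $\mathbb{L}^y+\mathfrak{w}\cdot D$ on $\mathcal{C}_0$ was obtained in the second step. I expect the main obstacle to be precisely the third step: because the drift in \eqref{dyf2} is not globally Lipschitz, the moment estimates for the stochastic flow and its first and second derivatives genuinely require exploiting the ``Ornstein--Uhlenbeck plus rotation'' structure rather than generic estimates for stochastic flows; the functional-analytic bookkeeping in the remaining steps is routine. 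This is essentially the content of Section~12.3 of \cite{Ksiazka}.
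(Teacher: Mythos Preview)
Your proposal is correct and follows essentially the same route as the paper's (commented-out) proof, which is in turn taken from Section~12.3 of \cite{Ksiazka}: apply the Ethier--Kurtz core criterion, verify $\mathcal{C}_0\subset D(\mathcal{L}_y)$ via It\^o's formula, and control the derivatives of the stochastic flow by Gronwall together with the key structural observation $|\tilde\fa^\fa(s)|=|\fa^\fa(s)|$, so that the amplification is governed by the stationary Gaussian Ornstein--Uhlenbeck process. The only cosmetic differences are that the paper takes $D_0=\mathcal{C}_0$ and proves the slightly stronger invariance $P_t^y\big(W^{p,2}(\nu_*^y)\big)\subset W^{p',2}(\nu_*^y)$ for $p>p'$, and that it invokes the It\^o--Krylov formula directly for $F\in\mathcal{C}_0$ rather than your approximation-plus-closedness argument.
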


As a direct conclusion from Proposition \ref{Lem542} and estimate \eqref{inv2} we obtain also.
\begin{Propozycja}\label{spectralneStwr} Fix $y\in\mbR^2$.
Assume that a function $F:\mbR^{2N}\to\mbR$, is such that $F\in
L^2(\nu_{*}^y)$ and
\begin{equation}\label{4.21}
\int_{\mbR^{2N}} Fd\nu_{*}^y=0.
\end{equation}
Then,
\begin{align}\label{spectralgap2}\begin{aligned}
&||P_t^{y}F||_{L^2(\nu_{*}^y)}\leq e^{-\gamma_0
t}||F||_{L^2(\nu_{*}^y)},\quad t\ge0.\\
\end{aligned}\end{align}
where $\gamma_0$ was introduced in $\eqref{gamma}$.
\end{Propozycja}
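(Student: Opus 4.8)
The plan is to derive the spectral gap estimate \eqref{spectralgap2} for the perturbed semigroup $(P_t^y)_{t\ge0}$ directly from its $L^2$-generator $\mathcal{L}_y$, by exploiting the fact that the perturbation $\mathfrak{w}\cdot D$ is antisymmetric on $L^2(\nu_*^y)$ and hence does not affect the Dirichlet form. Since $\mathcal{C}_0$ is a core for $\mathcal{L}_y$ by Proposition \ref{Lem542}, and since \eqref{inv2} gives
\begin{equation*}
-\langle \mathcal{L}_y F, F\rangle_{\nu_*^y}\ge\gamma_0\|F\|_{L^2(\nu_*^y)}^2
\end{equation*}
for every $F\in\mathcal{C}_0$ with $\int F\,d\nu_*^y=0$, the first step is to upgrade this from the core to the full form domain: for $F\in D(\mathcal{L}_y)$ with zero mean, approximate $F$ by $F_n\in\mathcal{C}_0$ in the graph norm (subtracting off means, which converge to zero), and pass to the limit in the bilinear form, using that mean-zero is preserved under $P_t^y$ by \eqref{inv1}.

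The second step is the standard differentiation-of-the-norm argument. Fix $F\in L^2(\nu_*^y)$ with $\int F\,d\nu_*^y=0$; by density and contractivity it suffices to treat $F\in D(\mathcal{L}_y)$, since both sides of \eqref{spectralgap2} are continuous in $F$ in the $L^2(\nu_*^y)$-norm. Set $\phi(t):=\|P_t^y F\|_{L^2(\nu_*^y)}^2$. Because $P_t^y F\in D(\mathcal{L}_y)$ and has zero mean for all $t\ge0$, the map $t\mapsto\phi(t)$ is differentiable with
\begin{equation*}
\phi'(t)=2\langle \mathcal{L}_y P_t^y F, P_t^y F\rangle_{\nu_*^y}\le -2\gamma_0\|P_t^y F\|_{L^2(\nu_*^y)}^2=-2\gamma_0\phi(t).
\end{equation*}
Gr\"onwall's inequality then yields $\phi(t)\le e^{-2\gamma_0 t}\phi(0)$, which is exactly \eqref{spectralgap2} after taking square roots. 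The antisymmetry relation \eqref{012706-18} is what guarantees $\langle\mathfrak{w}\cdot D G,G\rangle_{\nu_*^y}=0$ for $G\in\mathcal{C}_0$, so that only the symmetric part $\mathbb L^y$ contributes to the Dirichlet form and the Ornstein--Uhlenbeck spectral gap $\gamma_0$ from \eqref{spectralgapOU2} carries over intact.

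I expect the main technical obstacle to be the justification that mean-zero is preserved and that the form inequality \eqref{inv2}, a priori established only on the core $\mathcal{C}_0$, extends to all of $D(\mathcal{L}_y)$ and in particular to $P_t^y F$ for arbitrary mean-zero $F\in L^2(\nu_*^y)$; one must be careful because $\mathfrak{w}$ is unbounded (only in $L^p(\nu_*^y)$ for all finite $p$, per the remark after \eqref{polew}), so the graph-norm approximation and the limiting argument in the bilinear form require controlling cross terms like $\langle\mathfrak{w}\cdot D(F_n-F),F\rangle_{\nu_*^y}$ via \eqref{012706-18} and H\"older's inequality. Alternatively, and perhaps more cleanly, one can bypass this by invoking Proposition \ref{Lem542} to note that $\mathcal{C}_0$ is invariant enough that the semigroup $P_t^y$ maps a dense subclass into $\mathcal{C}_0$, or simply cite the general result in \cite{Ksiazka}, Section 12.3, that antisymmetric perturbations of a symmetric generator with a spectral gap preserve that gap; the remaining routine steps (Gr\"onwall, density) are then immediate.
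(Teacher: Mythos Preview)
Your proposal is correct and follows essentially the same approach as the paper: the paper states the proposition as ``a direct conclusion from Proposition~\ref{Lem542} and estimate~\eqref{inv2}'', i.e., exactly your strategy of combining the antisymmetry \eqref{012706-18} (so that only the Ornstein--Uhlenbeck part $\mathbb L^y$ contributes to the Dirichlet form), the spectral gap \eqref{spectralgapOU2} for $\mathbb L^y$, the core property of $\mathcal{C}_0$ to extend \eqref{inv2} to $D(\mathcal{L}_y)$ via graph-norm approximation, and then the standard differentiation/Gr\"onwall argument. Your worry about the cross terms is unfounded here: graph-norm convergence $F_n\to F$, $\mathcal{L}_yF_n\to\mathcal{L}_yF$ in $L^2(\nu_*^y)$ already gives $\langle\mathcal{L}_yF_n,F_n\rangle\to\langle\mathcal{L}_yF,F\rangle$ directly, so there is no need to control $\mathfrak w\cdot D$ separately.
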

\begin{remark} 
Fix $y\in\mbR^2$.  Suppose that  $F$ satisfies   \eqref{4.21}.
Applying an interpolation argument  we conclude, from
\eqref{spectralgap2} and the fact that $P_t^{y}$ is contraction in
both $L^1(\nu_*^y)$ and $L^\infty(\nu_*^y)$,
that for any $p\in(1,+\infty)$ there exists  $\gamma(p)\in(0,\gamma_0)$
such that
\begin{equation}\label{pspectral}
||P_t^{y}F(\cdot;y)||_{L^p(\nu_{*}^y)}\leq
e^{-\gamma(p)t}||F(\cdot;y)||_{L^p(\nu_{*}^y)},\quad \ t\geq0.
\end{equation}
\end{remark}


\subsection{Corrector} \label{korekotsek}

Since 
$$ 
\int_{\mbR^{2N}}\mathfrak{w}_q d\nu_{*}^y=0 , \quad q=1,2
$$
 for any
$y\in\bbR^2$,
we can define 
 the corrector in direction $e_q=(\delta_{1,q},\delta_{2,q})$ by letting
$$
\chi_q(\cdot;y):=\int_0^{+\infty}P_t^y \mathfrak{w}_qdt,\quad q=1,2,\,y\in\bbR^2.
$$
Thanks to \eqref{pspectral} it  belongs to the $L^p(\nu_{*}^y)$-domain of the
generator $\mathcal{L}_y$
and it is the unique solution of the problem
\begin{equation}\label{011702}
 -{\cal L}_y\chi_q(\cdot;y)=\mathfrak{w}_q\quad\mbox{and}\quad  \int_{\mbR^{2N}}\chi_q(\cdot;y)d\nu_{*}^y=0.
\end{equation}

From the standard regularity results for diffusions with smooth coefficients it
follows that corrector $\chi_q(\cdot;y)$
belongs to the Sobolev space  $W^{m,p}_{loc}(\bbR^{2N})$ for any
$p\in[1,+\infty)$ and $m\ge1$. 
Far less trivial is the regularity of the corrector in the $y$-variable.
From Theorem \ref{lab} below we can conclude the following.
\begin{Propozycja}
\label{prop012706-18}
We have
$\chi_q\in W^{2,p}_{loc}(\bbR^{2N+2})$ for any $p\in[1,+\infty)$ and $q=1,2$.
\end{Propozycja}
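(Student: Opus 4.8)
The plan is to obtain the joint regularity in the variable $(\fa,y)\in\bbR^{2N}\times\bbR^2=\bbR^{2N+2}$ by combining two ingredients. The first is already at our disposal: for each fixed $y$ the corrector $\chi_q(\cdot;y)$ is smooth in the fast variable $\fa$, in fact $\chi_q(\cdot;y)\in W^{m,p}_{loc}(\bbR^{2N})$ for every $m\geq1$ and $p\in[1,+\infty)$. The second, which is the non-trivial one, is the control of the behaviour of $\chi_q$ in the slow variable $y$ furnished by Theorem \ref{lab} below --- concretely, the existence of the $y$-derivatives of $\chi_q$ up to second order, belonging locally to the appropriate $L^p$ spaces. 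Since for $p=1$ the assertion follows from the case $p>1$ via the embedding $L^p(B)\hookrightarrow L^1(B)$ on balls $B$, it suffices to fix $p\in(1,+\infty)$ from now on.

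First I would differentiate the defining relations \eqref{011702} of the corrector in the slow variable. Neither $\mathfrak{w}_q$ nor the operator $D$ of \eqref{gen} depends on $y$; only the Ornstein--Uhlenbeck part $\mathbb L^y$ of \eqref{gen} does, through the coefficients $\alpha_\jj,\sigma_\jj\in C^2_b(\bbR^d)$. Hence a formal differentiation in $y_n$ gives
\begin{equation}\label{e.dycorr1}
-\mathcal{L}_y\big(\partial_{y_n}\chi_q\big)(\cdot;y)=\big(\partial_{y_n}\mathbb L^y\big)\chi_q(\cdot;y),\qquad n=1,2 ,
\end{equation}
where $\partial_{y_n}\mathbb L^y$ is once more a second order operator in $\fa$, with coefficients that are polynomial in $\fa$ and of class $C^1_b$ in $y$. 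By the $\fa$-smoothness of $\chi_q$ the right hand side of \eqref{e.dycorr1} lies in $L^p_{loc}(\bbR^{2N})$; Theorem \ref{lab} is exactly what legitimises \eqref{e.dycorr1}, i.e. guarantees that $\partial_{y_n}\chi_q$ exists as a weak $y$-derivative in a class in which \eqref{e.dycorr1} holds. The operator $\mathcal{L}_y$ is uniformly elliptic on every ball, with smooth coefficients and with ellipticity and growth constants uniform over $y$ in bounded sets. Interior $L^p$ estimates --- e.g. Theorem \ref{apriori}, or the standard Calder\'on--Zygmund interior estimates --- applied to \eqref{e.dycorr1} then give $\partial_{y_n}\chi_q(\cdot;y)\in W^{2,p}_{loc}(\bbR^{2N})$, with local bounds uniform in $y$. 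In particular every mixed second derivative $\partial_{a_k}\partial_{y_n}\chi_q$ belongs to $L^p_{loc}(\bbR^{2N+2})$.

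Next I would differentiate once more in $y_m$. This produces
\begin{equation}\label{e.dycorr2}
-\mathcal{L}_y\big(\partial^2_{y_m y_n}\chi_q\big)(\cdot;y)=\big(\partial_{y_m}\mathbb L^y\big)\big(\partial_{y_n}\chi_q\big)+\big(\partial_{y_n}\mathbb L^y\big)\big(\partial_{y_m}\chi_q\big)+\big(\partial^2_{y_m y_n}\mathbb L^y\big)\chi_q ,
\end{equation}
and by the previous step together with the $\fa$-regularity the right hand side of \eqref{e.dycorr2} is in $L^p_{loc}(\bbR^{2N})$; Theorem \ref{lab} supplies the a priori existence of $\partial^2_y\chi_q$ in the class in which \eqref{e.dycorr2} is valid, so that $\partial^2_{y_m y_n}\chi_q\in L^p_{loc}(\bbR^{2N+2})$ as well. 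Collecting the three families of second derivatives --- $\partial^2_{\fa}\chi_q$ (from the $\fa$-regularity), $\partial_{\fa}\partial_y\chi_q$ (from \eqref{e.dycorr1}) and $\partial^2_y\chi_q$ (from \eqref{e.dycorr2}) --- together with $\chi_q$ and its first derivatives, all in $L^p_{loc}(\bbR^{2N+2})$, yields $\chi_q\in W^{2,p}_{loc}(\bbR^{2N+2})$ for every $p\in(1,+\infty)$, and then for $p=1$ by the embedding noted at the outset.

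The only genuinely hard step is Theorem \ref{lab} itself. Since $\chi_q(\cdot;y)=\int_0^{+\infty}P^{y}_t\mathfrak{w}_q\,dt$ and the semigroup $P^{y}_t$ depends on $y$, differentiating this expression in $y$ amounts to differentiating under the integral sign, which requires quantitative control of $\partial_yP^{y}_t$ uniform enough in $t$; obtaining it is precisely the purpose of the Malliavin-calculus argument of Section \ref{pochodna}, and it is at this point, and only here, that the hypothesis of quasi-periodicity of the flow enters. Granted Theorem \ref{lab}, the reduction to Proposition \ref{prop012706-18} is the elliptic bootstrap sketched above.
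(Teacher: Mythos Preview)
Your proposal is correct and follows essentially the same route as the paper, which simply cites Theorem~\ref{lab} without further details. One remark: the elliptic bootstrap you set up via \eqref{e.dycorr1}--\eqref{e.dycorr2} is redundant, since Theorem~\ref{lab} already asserts that $\nabla_y^m\chi_q(\cdot;y)\in W^{p,2}(\nu_*^y)$ uniformly in $y$ for $m=0,1,2$ (see \eqref{extra-3}), which on compact $\fa$-sets is equivalent to the local flat-measure Sobolev regularity you recover from the interior estimates; in fact the proof of Theorem~\ref{lab} itself proceeds exactly by differentiating the corrector equation in $y$ and applying Corollary~\ref{cor010709} (the weighted analogue of your Calder\'on--Zygmund step).
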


\section{Proof of Theorem \ref{glowne}}

\label{sec5}

To abbreviate the notation for a given function $G:\bbR^{2N+2}\to\bbR$
we write 
\begin{equation}\label{ozn}
G^{(\ep)}\left(t\right):= \tilde
G\left(\tep,\bar x_\ep(t),x_\ep(t)\right),
\end{equation}
where $\tilde
G$ is given by \eqref{tild}, $x_\ep(t)$ is the scaled trajectory as in
\eqref{poleweup}, with $s=0$, $x_0=0$, and $\bar x_\ep(t):=\eps^{-1}x_\ep(t)$.
Using the It\^{o}-Krylov formula (see \cite{Krylov}, Theorem 1,
p. 122) and the above convention we obtain
\begin{align}\label{chitylda}\begin{aligned}
& d\left[\eps
  \chi^{(\ep)}_{q}\left(t\right)\right]=\Bigg\{\frac{1}{\eps}({\cal
      L}\chi_{q})^{(\ep)}\left( t\right)+  U_{\ep}\left(t\right)\cdot(D\chi_q)^{(\ep)}(t)+\bbV^{(\ep)}(t)\cdot\chi^{(\ep)}_{q,y}\left(
t\right)\\
&+\sum_{i=1}^NV^{(\ep)}\left(t\right)\cdot\bigg[\chi_{q,a_i}^{(\ep)}\left(
t\right) a_{i,y}^{(\eps)}(t)
+\chi_{q,b_i}^{(\ep)}\left(
t\right) b_{i,y}^{(\eps)}(t)
\bigg]\Bigg\}dt+{\cal
M}_q^{(\ep)}(dt),\quad q=1,2,
\end{aligned}\end{align} 
Here the processes $\chi^{(\ep)}_{q,a_i}(t)$,  $\chi^{(\ep)}_{q,b_i}(t)$, $\chi^{(\ep)}_{q,y}(t)$ are formed from
the partials $\chi_{q,a_i}({\frak a},y)$, $\chi_{q,b_i}({\frak a},y)$ and $\chi_{q,y}({\frak a},y)$ using the convention introduced in  \eqref{tild} and \eqref{ozn}. 
The processes $({\cal
      L}\chi_{q})^{(\ep)}\left( t\right)$, $(D\chi_q)^{(\ep)}(t)$ are obtained from the fields  $\widetilde{{\cal
      L}\chi_{q}}\left( t,x,y\right)$ and $D\tilde\chi_{q}(t,x,y)$ (operator $D$ is defined in \eqref{obrot}).
We let
\begin{equation}
\label{a-eps}
{\fa}^{(\ep)}(t)=\left(  a_{i}^{(\ep)}(t),
b_{i}^{(\ep)}(t)\right)_{i=1,\dots,N},
\end{equation}
where 
\begin{align}
\label{a-b}
&a_{i}^{(\ep)}(t):= a_i(t, x_{\eps}(t))\cos(k_i\cdot \bar x_{\eps}(t))+b_i(t, x_{\eps}(t))\sin(k_i\cdot \bar x_{\eps}(t)),\\
&
 b_{i}^{(\ep)}(t):=-a_i(t, x_{\eps}(t))\sin(k_i\cdot \bar x_{\eps}(t))+b_i(t, x_{\eps}(t))\cos(k_i\cdot \bar x_{\eps}(t) )).
\nonumber
\end{align}
Similarly we define $
{\fa}^{(\ep)}_y(t)=\left(  a_{i,y}^{(\ep)}(t),
b_{i,y}^{(\ep)}(t)\right)_{i=1,\dots,N},
$ using the processes $a_{i,y}(t,y)$ and $b_{i,y}(t,y)$ - the
derivatives w.r.t. variable $y$ of $a_{i}(t,y)$ and $b_{i}(t,y)$ correspondingly.

The martingale term is given by
\begin{equation}
\label{mart-1} {\cal
M}_q^{(\ep)}(dt):=\sum_{i=1}^N\left(\sigma_i\sqrt{2\al_i }\right)(x_{\eps}(t))\Big[\partchia\left(
t\right)dw_{i,a}(t)+ \partchib\left( t\right)dw_{i,b}(t)\Big]
\end{equation}
 with  $w_{i,a}(t), w_{i,b}(t)$, $i=1,\ldots,N$  independent standard Brownian motions. 
 
 Using the fact that $-\widetilde{{\cal L}\chi_q}(t,x,y)=W_q(t,x,y)$ we obtain
 \begin{align}\label{ixy}\begin{aligned}
& x_{\ep,q}(t)=\eps
  \chi^{(\ep)}_{q}\left(0\right)-\eps
  \chi^{(\ep)}_{q}\left(t\right)+\sum_{\iota=1}^2\int_0^t{\cal
    W}^{(\ep,\iota)}_{q} \left( s\right)ds +\int_0^t{\cal M}_q^{(\ep)}\left(ds\right),\quad q=1,2,
\end{aligned}\end{align}
 where ${\cal W}^{(\ep,\iota)}_q$ are formed  (by means of 
 \eqref{tild}) from
    \begin{align}
    \label{cWj}
 &{\cal W}_q^1(t,x,y):=U_q(t,x,y)+
U(t,x,y)\cdot D\tilde \chi_{q}(t,x,y) \nonumber \\
&
+\sum_{i=1}^NW(t,x,y)\cdot\left[\tpartchiaB(t,x,y)\tilde a_{i,y}(t,x,y)
+\tpartchibB(t,x,y)\tilde b_{i,y}(t,x,y)
\right],\nonumber\\
&{\cal W}_q^{2}(t,x,y):=V(t,x,y)\cdot\tilde\chi_{q,y}(t,x,y)\\
&+\ep\sum_{i=1}^NU(t,x,y)\cdot\left[\tpartchiaB(t,x,y)\tilde a_{i,y}(t,x,y)
+\tpartchibB(t,x,y)\tilde b_{i,y}(t,x,y)
\right].\nonumber
\end{align}
Observe that (cf \eqref{wzory3}) that the terms constituting ${\cal W}_q^{1}$  contain the fields $\tilde a_{i,y}(t,x,y)$ and $\tilde b_{i,y}(t,x,y)$
and are of apparent order of magnitude $O(1)$. They are not of the
form \eqref{ozn} and therefore require a separate treatment.
 We are going to deal with these terms in Section
 \ref{szac}. Expressions included in  ${\cal W}_q^{2}$
are either the  terms of order $O(1)$ that are of the form
\eqref{ozn}, or terms that are of apparent order of magnitude
$O(\eps)$. We shall deal with them
in Section \ref{twrergosek}.


\subsection{Term   ${\cal W}_q^{1}$}

\label{szac}

To avoid using multitude of constants appearing in our estimates, for
any two expressions $f,g:A\to[0,+\infty)$, where $A$ is some set, we
shall write $f\preceq g$ iff there exists $C>0$ such that
$f(a)\le C g(a)$, $a\in A$.  We shall also write $f\approx g$ iff
$f\preceq g$ and $g\preceq f$.

Comparing \eqref{wzory3} with the first  formula of \eqref{cWj}  we conclude that 
\begin{equation}\label{dgwai-1}
{\cal
  W}_q^{(1,\eps)}(t)=\sum_{i,j}a_{i,y_j}^{(\eps)}(t)F_{i,j}^{(\eps)}(t)+\sum_{i,j}b_{i,y_j}^{(\eps)}(t)G_{i,j}^{(\eps)}(t)
\end{equation}
where the summations extend over $i=1,\ldots,N$, $j=1,2$,
$a_{i,y_j}^{(\eps)}(t)$, $b_{i,y_j}^{(\eps)}(t)$ are given by
\begin{align}
\label{a-b-y}
&a_{i,y_j}^{(\ep)}(t):= a_{i,y_j}(t, x_{\eps}(t))\cos(k_i\cdot \bar x_{\eps}(t))+b_{i,y_j}(t, x_{\eps}(t))\sin(k_i\cdot \bar x_{\eps}(t)),\\
&
 b_{i,y_j}^{(\ep)}(t):=-a_{i,y_j}(t, x_{\eps}(t))\sin(k_i\cdot \bar x_{\eps}(t))+b_{i,y_j}(t, x_{\eps}(t))\cos(k_i\cdot \bar x_{\eps}(t) ))
\nonumber
\end{align}
and 
$
F_{i,j}^{(\eps)}(t)$, $G_{i,j}^{(\eps)}(t)$
 are the processes obtained  (using \eqref{ozn})
from 
\begin{align*}
&F_{i,j}({\frak a},y)=e_{q,j}^\perp+D_j^\perp \chi_q({\frak
  a},y)+{\frak w}_j({\frak a})\chi_{q,a_i}({\frak a},y),\\
&
G_{i,j}({\frak a},y)={\frak w}_j({\frak a})\chi_{q,b_i}({\frak a},y),\quad
i=1,\ldots,N,\,j=1,2,
\end{align*}
 with $e_q^\perp$, $\frak w$ and $D^\perp$ given by \eqref{k-perp},  \eqref{polew} and
 \eqref{obrot-p} respectively.
Using Theorem \ref{lab} and Proposition \ref{Gaussowskie} one can
conclude that these functions satisfy
\begin{equation}\label{zalF}
\sup_{y\in\mbR^2}\left(\| F_{i,j}(\cdot;y)\|_{L^p(\nu_{*}^y)}+\| G_{i,j}(\cdot;y)\|_{L^p(\nu_{*}^y)}\right)<+\infty,\quad
j=1,2,i=1,\dots,N
\end{equation}
for any $p\in[1,+\infty)$.

To average expressions of the form \eqref{dgwai-1} we represent them, using appropriately defined correctors, as
functionals of the process ${\frak a}^{(\eps)}(t)$ (cf \eqref{a-eps}). This enables us to
apply our averaging result, see Theorem \ref{ergodyczne} below, to identify
their appropriate limits, as $\eps\to0$.


We introduce the correctors
$\Theta_{i,j}^{(1)},\Theta^{(2)}_{i,j}:\mbR^{2N+2}\to\bbR,$
which are the solutions of the following system of equations
\begin{align}\label{Theduz}\begin{aligned}
& \left[{\cal L}_y- \al_i(y) \right]\Theta^{(1)}_{i,j}(\cdot;y)-(k_i \cdot
\mathfrak{w}) \Theta^{(2)}_{i,j}(\cdot;y)=F_{i,j}(\cdot;y),\\
& \left[{\cal L}_y- \al_i(y) \right]\Theta^{(2)}_{i,j}(\cdot;y)+(k_i \cdot
\mathfrak{w}) \Theta^{(1)}_{i,j} (\cdot;y)=G_{i,j}(\cdot;y),
\end{aligned}\end{align}
such that
\begin{equation}\label{przestrzen2}
\left\|\Theta^{(\iota)}_{i,j}(\cdot;y)\right\|_{L^p(\nu_{*}^y)}<+\infty,\quad
\iota,j=1,2,\ i=1,\dots,N, p\in[1,+\infty),\,y\in \bbR^2.
\end{equation}
To solve the above system, note that
the function $\Psi_{i,j}:\mbR^{2N+2}\to\mathbb C$
$$
\Psi_{i,j}:=\Theta^{(1)}_{i,j}+\textrm{i}\Theta^{(2)}_{i,j}
$$
satisfies
\begin{equation}\label{wyjsciowe}
\left[{\cal L}_y- \al_i(y) \right]\Psi_{i,j}(\cdot;y)+\textrm{i}(k_i
\cdot \mathfrak{w}) \Psi_{i,j}(\cdot;y)=K_{i,j}(\cdot;y),
\end{equation}
with $K_{i,j}:= F_{i,j} +\textrm{i} G_{i,j}$. Here
$\textrm{i}=\sqrt{-1}$ denotes the imaginary unit and ${\frak w}$ is
given by \eqref{polew}.

Using the Feynman-Kac formula we obtain that the solution \eqref{wyjsciowe} is given by
\begin{align}\label{Feynamnkac}\begin{aligned}
&\Psi_{i,j}(\fa;y)=\int_0^{+\infty}\bbE\Bigg[\exp\left\{-\al_i(y)t+\textrm{i}\int_0^t(k_i
\cdot \mathfrak{w}(\tilde\fa^{\fa}(s;y))ds\right\}K_{i,j}(\tilde\fa^{\fa}(t;y);y)\Bigg]dt,
\end{aligned}\end{align}
$i=1,\dots,N$, $j=1,2$. Here $\tilde\fa^\fa(t)=\left(\tilde a_i^\fa(t),\tilde b_i^\fa(t)\right)$
the solution of \eqref{dyf2} satisfying $\tilde\fa^\fa(0)=\fa$. The
improper integral above converges absolutely, thanks to assumption \eqref{gamma}.


The following lemma allows us to replace expressions containing
processes $a_{i,y_j}^{(\eps)}(t)$ and  $b_{i,y_j}^{(\eps)}(t)$ by
functionals of the processes $a_{i}^{(\eps)}(t)$ and  $b_{i}^{(\eps)}(t)$.
\begin{lem}\label{pochodnelem}
Assume that $F_{i,j}$, $G_{i,j}$, $i=1,\ldots,N$, $j=1,2$ satisfy
\eqref{zalF} and $\Theta^{(\iota)}_{i,j}$ are the respective solutions
of \eqref{Theduz}. Suppose also that the processes $\left(a_{i,y_j}^{(\ep)}(t)\right)_{t\ge0}$, $\left(b_{i,y_j}^{(\ep)}(t)\right)_{t\ge0}$  are given by \eqref{a-b-y} and $\left(F_{i,j}^{(\eps)}(t)\right)_{t\ge0}$, $\left( G_{i,j}^{(\eps)}(t)\right)_{t\ge0}$  are obtained from $F_{i,j}$, $G_{i,j}$ using \eqref{ozn}. Then, 
\begin{align}\label{lematPoch}\begin{aligned}
&\int_0^t\left[ a_{i,y_j}^{(\ep)}(s)F_{i,j}^{(\eps)}(s)+
b_{i,y_j}^{(\ep)}(s) G_{i,j}^{(\eps)}(s)\right]ds\\
&=\int_0^t \left\{\partial_{y_j} \al_i(x_\ep(s))
\left(\Theta^{(1,\ep)}_{i,j}(s)a_{i}^{(\ep)}(s)+\Theta^{(2,\ep)}_{i,j}(s)b_{i}^{(\ep)}(s)\right)\right\}ds+\eps
\tilde N^{(\ep)}(t),
\end{aligned}\end{align} where $\ep\tilde N^{(\ep)}(t)$ is a negligible semi-martingale
i.e. for any $T>0$ we have
\begin{equation}\label{zanied}
\lim_{\ep\to0}\ep\bbE\left[\sup_{0\leq t\leq T} \left|\tilde
N^{(\ep)}(t)\right|\right]=0.
\end{equation}
\end{lem}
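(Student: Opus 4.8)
\emph{Strategy.} The plan is to run the corrector method one more time, in the spirit of \eqref{chitylda}--\eqref{ixy}. Fix $i\in\{1,\dots,N\}$, $j\in\{1,2\}$ and apply the It\^{o}-Krylov formula (\cite{Krylov}; It\^{o}-Krylov rather than the classical It\^{o} formula is needed because, by Proposition \ref{prop012706-18}, $\Theta^{(\iota)}_{i,j}\in W^{2,p}_{loc}(\bbR^{2N+2})$ only) to the semimartingale
\[
\Xi^{(\ep)}(t):=\ep\big[\Theta^{(1,\ep)}_{i,j}(t)\,a_{i,y_j}^{(\ep)}(t)+\Theta^{(2,\ep)}_{i,j}(t)\,b_{i,y_j}^{(\ep)}(t)\big],
\]
where $\Theta^{(\iota,\ep)}_{i,j}(t)=\Theta^{(\iota)}_{i,j}\big(\fa^{(\ep)}(t),x_\ep(t)\big)$ in the notation of \eqref{ozn}, with $\fa^{(\ep)}$ as in \eqref{a-eps}--\eqref{a-b} and $a_{i,y_j}^{(\ep)},b_{i,y_j}^{(\ep)}$ as in \eqref{a-b-y}. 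The crucial structural fact is that, on the fast scale $t/\ep^{2}$, the process $\fa^{(\ep)}$ evolves by the diffusion \eqref{dyf2} with generator ${\cal L}_y$ of \eqref{gen}, while $\big(a_{i,y_j}^{(\ep)},b_{i,y_j}^{(\ep)}\big)$ satisfies the \emph{linearized} system obtained by differentiating the Ornstein--Uhlenbeck equations \eqref{Ornstein} in the slow variable; that linearization produces the extra forcing terms $-\partial_{y_j}\alpha_i(y)\,a_i^{(\ep)}$ and $-\partial_{y_j}\alpha_i(y)\,b_i^{(\ep)}$, which are exactly the source of the main term on the right-hand side of \eqref{lematPoch}.

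\emph{The algebraic identity.} Writing $\Psi_{i,j}=\Theta^{(1)}_{i,j}+\mathrm i\,\Theta^{(2)}_{i,j}$ (the solution of \eqref{wyjsciowe}, represented by \eqref{Feynamnkac}), $\zeta_i=a_i+\mathrm i\,b_i$ and $\eta_i=a_{i,y_j}+\mathrm i\,b_{i,y_j}$, one has $\Theta^{(1)}_{i,j}a_{i,y_j}+\Theta^{(2)}_{i,j}b_{i,y_j}=\mathrm{Re}\big(\Psi_{i,j}\,\overline{\eta_i}\big)$. A short computation using \eqref{gen}, \eqref{Rj}, \eqref{polew} gives ${\cal L}_y\zeta_i=-\big[\alpha_i(y)+\mathrm i\,(k_i\cdot\mathfrak{w})\big]\zeta_i$; combining this with \eqref{wyjsciowe} for $\Psi_{i,j}$, the linearized drift of $\eta_i$, and the Leibniz rule for the generator $\mathbb G$ of the joint fast process $\big(\fa^{(\ep)},(a_{i,y_j}^{(\ep)},b_{i,y_j}^{(\ep)})\big)$, one obtains, after taking real parts,
\[
\mathbb G\big(\Theta^{(1)}_{i,j}a_{i,y_j}+\Theta^{(2)}_{i,j}b_{i,y_j}\big)=\big(F_{i,j}a_{i,y_j}+G_{i,j}b_{i,y_j}\big)-\partial_{y_j}\alpha_i(y)\big(\Theta^{(1)}_{i,j}a_i+\Theta^{(2)}_{i,j}b_i\big)+{\cal R}_{i,j},
\]
where ${\cal R}_{i,j}$ collects the carr\'e-du-champ terms that arise because $a_{i,y_j},b_{i,y_j}$ and $a_i,b_i$ are driven by the \emph{same} Brownian motions $w_{i,a},w_{i,b}$; it is a first-order expression in $\Theta^{(\iota)}_{i,j}$ which is a functional of $\big(\fa^{(\ep)},x_\ep\big)$ of the form \eqref{ozn}. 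Substituting this into the It\^{o}-Krylov expansion of $\Xi^{(\ep)}$: the leading $O(\ep^{-1})$ drift of $d\Xi^{(\ep)}$ is $\ep^{-1}$ times the above expression evaluated along the trajectory, the remaining drift terms (those coming from the slow motion of $x_\ep$ in the $y$-slot, which bring in second-order gradient processes $a_{i,y_jy}^{(\ep)}$, from the $\ep U$ part of the velocity, and from the rotation) are $O(1)$, and the martingale part is $O(1)$ too. Integrating from $0$ to $t$, using $-\widetilde{{\cal L}\chi_q}=W_q$-style rearrangement, and multiplying through by $\ep$, one arrives at \eqref{lematPoch}, with $\ep\tilde N^{(\ep)}(t)$ equal to the boundary terms $\ep\Xi^{(\ep)}(t)-\ep\Xi^{(\ep)}(0)=O(\ep^{2})$, plus $\ep$ times the time-integral of the $O(1)$ drift terms, plus $\ep$ times the $O(1)$ martingale, minus $\int_0^t{\cal R}_{i,j}^{(\ep)}(s)\,ds$.

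\emph{The remainder.} It remains to verify \eqref{zanied}. The boundary terms, the $\ep$-times-$O(1)$-drift and the $\ep$-times-martingale are handled exactly as their analogues were between \eqref{chitylda} and \eqref{ixy}: one uses the uniform bounds \eqref{przestrzen2} on $\|\Theta^{(\iota)}_{i,j}(\cdot;y)\|_{L^p(\nu_*^y)}$, and the analogous bounds for $\partial_{a_i},\partial_{b_i}$ and the first $y$-derivatives of $\Theta^{(\iota)}_{i,j}$ coming from Theorem \ref{lab}, together with the Gaussian moment estimates of Proposition \ref{Gaussowskie} for $\fa^{(\ep)}$, $\fa^{(\ep)}_y$ and the second-order gradient processes, H\"older's inequality and the Burkholder--Davis--Gundy inequality; the prefactor $\ep$ then sends all of these to $0$ uniformly on $[0,T]$. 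The genuinely delicate point — and the step I expect to be the main obstacle — is $\int_0^t{\cal R}_{i,j}^{(\ep)}(s)\,ds$: since ${\cal R}_{i,j}^{(\ep)}$ is an $O(1)$ functional of $\big(\fa^{(\ep)}(s),x_\ep(s)\big)$ it cannot be dominated by $\ep$, so the corrector argument must be iterated once more — solving, for each $y$, the auxiliary Poisson equation $-{\cal L}_y\Phi_{i,j}(\cdot;y)={\cal R}_{i,j}(\cdot;y)-\int{\cal R}_{i,j}(\cdot;y)\,d\nu_*^y$ (solvable thanks to the spectral gap \eqref{spectralgap2}, and again requiring the $y$-regularity of Section \ref{pochodna}), applying It\^{o}-Krylov to $\ep^{2}\Phi_{i,j}(\fa^{(\ep)}(t),x_\ep(t))$ to see that $\int_0^t{\cal R}_{i,j}^{(\ep)}\,ds$ equals $\int_0^t\big(\int{\cal R}_{i,j}(\cdot;x_\ep(s))\,d\nu_*^{x_\ep(s)}\big)ds$ up to negligible errors, and finally checking — by integration by parts against the Gaussian density \eqref{miara} — that this surviving $\nu_*^y$-average either cancels or is of a form that can be absorbed into $\ep\tilde N^{(\ep)}$. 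This recursive use of correctors, and the careful accounting of the $\ep^{-1}$ and $\ep^{0}$ terms in the two It\^{o}-Krylov expansions, is where essentially all of the work lies; the averaging of the remaining $\fa^{(\ep)}$-functionals is then postponed to Theorem \ref{ergodyczne}.
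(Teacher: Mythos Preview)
Your approach coincides with the paper's: apply the It\^{o} product rule to $\Theta^{(1,\ep)}_{i,j}a_{i,y_j}^{(\ep)}+\Theta^{(2,\ep)}_{i,j}b_{i,y_j}^{(\ep)}$, substitute the corrector system \eqref{Theduz} to identify the leading drift, and estimate the boundary, bounded-variation and martingale remainders using the Gaussian sup bounds of Section~\ref{sec12.1}, Lemma~\ref{SUPREMUM}, H\"older and Burkholder--Davis--Gundy. The paper carries this out explicitly in \eqref{calkia}--\eqref{N_t12}; your formulation via a joint generator $\mathbb G$ and the complex variable $\Psi_{i,j}\overline{\eta_i}$ is just a compact repackaging of that same Leibniz computation.

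Where you diverge is the carr\'e-du-champ contribution ${\cal R}_{i,j}$ arising from the common Brownian noise that drives both $\fa^{(\ep)}$ and $\fa_y^{(\ep)}$. In the paper this cross-variation term sits inside $\Gamma^{(\eps)}$ (the fourth block of \eqref{N_t1}) with a small $\ep$-prefactor and is estimated directly, with no iteration whatsoever. You instead flag ${\cal R}_{i,j}$ as $O(1)$ and propose a second corrector layer: solve $-{\cal L}_y\Phi_{i,j}={\cal R}_{i,j}-\overline{{\cal R}}_{i,j}$, reduce $\int_0^t{\cal R}_{i,j}^{(\ep)}\,ds$ to $\int_0^t\overline{{\cal R}}_{i,j}(x_\ep(s))\,ds$, and then hope that $\overline{{\cal R}}_{i,j}$ vanishes. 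It does not: Gaussian integration by parts against \eqref{miara} gives
\[
\overline{{\cal R}}_{i,j}(y)=\frac{\partial_{y_j}\!\big(\alpha_i\sigma_i^2\big)(y)}{\sigma_i^2(y)}\int_{\bbR^{2N}}\big(a_i\Theta^{(1)}_{i,j}+b_i\Theta^{(2)}_{i,j}\big)\,d\nu_*^y,
\]
which is generically nonzero, so it cannot be absorbed into $\ep\tilde N^{(\ep)}$ and your scheme does not establish \eqref{lematPoch} in the stated form. If your $\ep$-counting for ${\cal R}_{i,j}$ is the right one, the repair is not iteration but bookkeeping: since ${\cal R}_{i,j}^{(\ep)}$ is already of the type \eqref{ozn}, simply carry it as an additional $O(1)$ main term alongside $\partial_{y_j}\alpha_i\big(\Theta^{(1)}a_i+\Theta^{(2)}b_i\big)$ and let the averaging Lemma~\ref{ergodyczne} handle it in the next step, where it would feed into $B_q$. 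Either way, the recursive machinery you sketch is unnecessary and, as written, does not close.
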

\begin{proof}
From \eqref{roz} we obtain
\begin{align}\label{calkia}\begin{aligned}
&a_{i,y_j}(t;y)=\sqrt{2\alpha_i(y)}\gamma_{i,j}(y)\int_{-\infty}^te^{-\alpha_i(y)(t-s)}dw_{i,a}(s)\\\
&-\sqrt{2\alpha_i(y)}\sigma_i(y)\int_{-\infty}^t(t-s)\alpha_{i,y_j}(y)e^{-\alpha_i(y)(t-s)}dw_{i,a}(s).
\end{aligned}\end{align} 
with
$$
\gamma_{i,j}(y):=\sigma_{i,y_j}(y)+\frac{\alpha_{i,y_j}(y)}{2\alpha_i(y)}\sigma_i(y),\quad
y\in\bbR^2.
$$
Its It\^o stochastic differential can be written in the form
\begin{equation}\label{OrnsteinGrad}
\begin{aligned}
&da_{i,y_j}(t;y)=-\left[\alpha_{i,y_j}(y)a_{i}(t;y)+\alpha_i(y)a_{i,y_j}(t;y)\right]dt+\sqrt{2\alpha_i(y)}\gamma_{i,j}(y)dw_{i,a}(t).
\end{aligned}
\end{equation}
 Similar formulas hold for $\left(b_{i,y_j}(t;y)\right)$. Recall that (cf \eqref{tild} and \eqref{wzory3})
$$
\Theta^{(\iota,\ep)}_{i,j}(s):=\tilde\Theta^{(\iota)}_{i,j}\left(\frac{t}{\eps^2},\bar
x_\eps(t),x_\eps(t)\right),\quad W^{(\eps)}(t):=W \left(\frac{t}{\eps^2},\bar
x_\eps(t),x_\eps(t)\right)
$$
and likewise for $U^{(\eps)}(t)$.

From the Leibnitz rule applied to the respective stochastic
differentials and straightforward (but rather lengthy calculation) we obtain
\begin{align}\begin{aligned}\label{costam}
&\int_0^t \left\{ \al_{i,y_j}(x_\ep(s))
\left(\Theta^{(1,\ep)}_{i,j}(s) a_{i}^{(\ep)}(s)+\Theta^{(2,\ep)}_{i}(s) b_{i}^{(\ep)}(s)\right)\right\}ds+\eps  \tilde N^{(\eps)}(t)\\
&=\int_0^t\left[\left[{\cal L}- \al_i(x_\ep(s))
\right]\Theta^{(1,\ep)}_{i,j}(s)-(k_i \cdot
\bbW^{(\ep)}(s))\Theta^{(2,\ep)}_{i,j}(s)\right] a_{i,y_j}^{(\ep)}(s)ds\\
&+\int_0^t\left[\left[{\cal L}- \al_i(x_\ep(s))
\right]\Theta^{(2,\ep)}_{i,j}(s)+(k_i \cdot W^{(\ep)}(s))
\Theta^{(1,\ep)}_{i,j}(s)\right] b_{i,y_j}^{(\ep)}(s)ds.
\end{aligned}\end{align} 
We have denoted by
\begin{align}\begin{aligned}\label{tldN}
&\tilde N^{(\eps)}(t):=\ep\left[\Theta^{(1,\ep)}_{i,j}(s)
  a_{i,y_j}^{(\ep)}(s)+\Theta^{(2,\ep)}_{i,j}(s)
  b_{i,y_j}^{(\ep)}(s)\right]_{s=0}^t- \int_0^t\Gamma^{(\eps)}(s)ds-M^{(\eps)}(t),
\end{aligned}\end{align} 
where $
[f(s)]_{s=0}^t:=f(t)-f(0)$. We let $ M_0^{(\eps)}=0$ and
\begin{align}\begin{aligned}\label{N_t1}
&
\Gamma^{(\eps)}(t):=
(k_i\cdot U^{(\ep)}(t))\big[ \Theta^{(1,\ep)}_{i,j}\left( t\right)
b_{i,y_j}^{(\ep)}\left(t\right)-\Theta^{(2,\ep)}_{i,j}\left( t\right)
a_{i,y_j}^{(\ep)}\left(t\right)\big]
\\
&
+a_{i,y_j}^{(\ep)}\left(t\right)\Bigg\{U^{(\ep)}\left(t\right)\cdot(D\Theta_{i,j}^{(1)})^{(\ep)}(t)+V^{(\ep)}\left(t\right)\cdot\Big\{\sum_{i'}\bigg[\Theta^{(1,\ep)}_{i,j,a_{i'}}(t)
a_{i',y}^{(\ep)}\left(t\right)
+\Theta^{(1,\ep)}_{i,j,b_{i'}}(t)
b_{i',y}^{(\ep)}\left(t\right)\bigg]+\Theta^{(1,\ep)}_{i,j,y}(t)\Big\}\Bigg\}\\
&
+b_{i,y_j}^{(\ep)}\left(t\right)\Bigg\{U^{(\ep)}\left(t\right)\cdot(D\Theta_{i,j}^{(2)})^{(\ep)}(t)+V^{(\ep)}\left(t\right)\cdot\Big\{\sum_{i'}\bigg[\Theta^{(2,\ep)}_{i,j,a_{i'}}(t)
a_{i',y}^{(\ep)}\left(t\right)
+\Theta^{(2,\ep)}_{i,j,b_{i'}}(t)
b_{i',y}^{(\ep)}\left(t\right)\bigg]+\Theta^{(2,\ep)}_{i,j,y}(t)\Big\}\Bigg\}\\
&
+2\ep\left(\al_i\gamma_{i,j}\sigma_{i}\right)(x_\ep(t))\Big\{\cos(k_i \cdot
\bar x_\ep(t))\Theta^{(1,\ep)}_{i,j,a_{i}}(t)-\sin(k_i \cdot
\bar x_\ep(t))\Theta^{(2,\ep)}_{i,j,a_{i}}(t)\\
&
+\sin(k_i \cdot \bar
x_\ep(t))\Theta^{(1,\ep)}_{i,j,b_{i}}(t)+\cos(k_i \cdot \bar
x_\ep(t))\Theta^{(2,\ep)}_{i,j,b_{i}}(t)\Big\}
+V^{(\eps)}(t)\cdot\Big[\Theta^{(1,\ep)}_{i,j}(t)
a_{i,y,y_j}^{(\eps)}(t)+\Theta^{(2,\ep)}_{i,j}(t)
b_{i,y,y_j}^{(\eps)}(t)\Big],
\end{aligned}
\end{align}
and
\begin{align}\begin{aligned}\label{N_t12}
&
dM^{(\eps)}(t):=\sum_{i'}
a_{i,y_j}^{(\ep)}\left(t\right)(\sigma_{i'}\sqrt{2\al_{i'}})(x_\ep(t))\left[\Theta^{(1,\ep)}_{i,j,a_{i'}}(t)dw_{i',a}(t)+
\Theta^{(1,\ep)}_{i,j,b_{i'}}(t)dw_{i',b}(t)\right]\\
&+\sum_{i'}
b_{i,y_j}^{(\ep)}\left(t\right)(\sigma_{i'}\sqrt{2\al_{i'}})(x_\ep(t))\left[\Theta^{(2,\ep)}_{i,j,a_{i'}}(t)dw_{i',a}(t)+
\Theta^{(2,\ep)}_{i,j,b_{i'}}(t)dw_{i',b}(t)\right]\\
&+\Theta^{(1,\ep)}_{i,j}(t)\bigg\{(\gamma_{i,j}\sqrt{2\al_{i}})(x_\ep(t))\big[\cos(k_i \cdot
 \bar x_\ep(t))dw_{i,a}(t)+\sin(k_i \cdot \bar x_\ep(t)) dw_{i,b}(t)\big]\bigg\}\\
&+\Theta^{(2,\ep)}_{i,j}(t)\bigg\{(\gamma_{i,j}\sqrt{2\al_{i}})(x_\ep(t))\big[-\sin(k_i \cdot
 \bar x_\ep(t))dw_{i,a}(t)+\cos(k_i \cdot  \bar x_\ep(t)) dw_{i,b}(t)\big]\bigg\}.
\end{aligned}
\end{align}
The processes $a_{i,y_j,y}^{(\ep)}\left(t\right)$ and $b_{i,y_j,y}^{(\ep)}\left(t\right)$ appearing in the above formulas are given by analogues 
of \eqref{a-b-y}, where the first derivatives in $y$ are replaced by  the respective second derivatives.

Substituting from  \eqref{wyjsciowe} we obtain equality \eqref{lematPoch}.
In order to prove  \eqref{zanied} we consider the three terms that appear on the right hand side of \eqref{tldN}.
Concerning the boundary terms, by the Cauchy-Schwarz inequality, we can
estimate as follows
\begin{equation}
\label{010704}
\ep\bbE\left[\sup_{0\leq t\leq
T}|\Theta^{(1,\ep)}_{i,j}(t)a_{i,y_j}^{(\ep)}(t)|\right]\leq
\ep\left[\bbE\sup_{0\leq t\leq
T}\left(a_{i,y_j}^{(\ep)}(t)\right)^2\right]^{1/2}\left[\bbE\sup_{0\leq
t\leq T}\left(\Theta^{(1,\ep)}_{i,j}(t)\right)^2\right]^{1/2}.
\end{equation}
Furthermore
\begin{equation}\label{gaus}
\sup_{0\leq t\leq T}\left|a_{i,y_j}^{(\ep)}(t)\right|\preceq\sup_{0\leq
t\leq T,\,y\in
\mbR^2}\left\{\left|a_{i,y_j}(t/\ep^2;y)|+|b_{i,y_j}(t/\ep^2;y)\right|\right\}.
\end{equation}
The fields $\left(a_{i,y_j}(t;y)\right)_{(t,y)\in\bbR^3}$, $\left(b_{i,y_j}(t;y)\right)_{(t,y)\in\bbR^3}$ are Gaussian.  Using the results of Section
\ref{sec12.1} we conclude  that for any
  $\gamma\in(0,1)$ we have
\begin{equation}\label{pud}
\bbE\left[\sup_{0\leq t\leq
T}\left(a_{i,y_j}^{(\ep)}(t)\right)^2\right]\preceq\ep^{-\gamma},\quad \eps\in(0,1).
\end{equation}
From \eqref{pud} we obtain that
\begin{equation}\label{tigi}
\ep\bbE\left[\sup_{0\leq t\leq
T}|\Theta^{(1,\ep)}_{i,j}(t)a_{i,y_j}^{(\ep)}(t)|\right]\preceq
\ep^{1-\gamma/2}\left\{\bbE\left[\sup_{0\leq t\leq
T}\left(\Theta^{(1,\ep)}_{i,j}(t)\right)^2\right]\right\}^{1/2}.
\end{equation}
Applying Lemma \ref{SUPREMUM} we conclude that the right hand side vanishes, as $\eps\to0$. Therefore
\begin{equation}\label{pommis}
\lim_{\ep\to0}\ep\bbE\left[\sup_{0\leq t\leq
T}|\Theta^{(1,\ep)}_{i,j}(t)a_{i,y_j}^{(\ep)}(t)|\right]=0,\quad T>0.
\end{equation}


Concerning the bounded variation term on the right hand side of \eqref{tldN} note that the  last expression on the right hand side of
\eqref{N_t1} can be estimated as follows
\begin{align*}
&\bbE\left[\sup_{0\leq t\leq T}\left| \int_0^t
V^{(\eps)}(s)\cdot\Big[\Theta^{(1,\ep)}_{i,j}(s)
a_{i,y,y_j}^{(\eps)}(s)+\Theta^{(2,\ep)}_{i,j}(s)
b_{i,y,y_j}^{(\eps)}(s)\Big]ds\right|\right]\\
&\preceq T\bbE\left\{\sup_{0\leq t\leq
T}\left\{|V^{(\eps)}(t)|\Big[|\Theta^{(1,\ep)}_{i,j}(t)
a_{i,y,y_j}^{(\eps)}(t)|+|\Theta^{(2,\ep)}_{i,j}(t)
b_{i,y,y_j}^{(\eps)}(t)|\Big]\right\}\right\}.
\end{align*}
From this point on the estimate can be conducted in a similar way as for the  boundary  terms and we conclude that
\begin{align*}
\lim_{\eps\to0}\eps\bbE\left[\sup_{0\leq t\leq T}\left| \int_0^t
V^{(\eps)}(s)\cdot\Big[\Theta^{(1,\ep)}_{i,j}(s)
a_{i,y,y_j}^{(\eps)}(s)+\Theta^{(2,\ep)}_{i,j}(s)
b_{i,y,y_j}^{(\eps)}(s)\Big]ds\right|\right]=0.
\end{align*}
The remaining terms appearing in the bounded variation expression on   the right hand side of
\eqref{N_t1}  can be dealt with similarly and we obtain
\begin{align*}
 \lim_{\eps\to0}\eps\bbE\left[\sup_{0\leq t\leq T}
\left| \int_0^t\Gamma^{(\eps)}(s)ds\right|\right]=0.
 \end{align*}
 For the martingale on the right hand side of \eqref{tldN}, its first
 term, given in the right hand side of \eqref{N_t12},
can be estimated first by Jensen's and then by Doob's inequality leading to
\begin{align}\label{tigii}\begin{aligned}
& \ep\bbE\left[\sup_{0\leq t\leq T} \left|\int_0^t\sum_{i'}
a_{i,y_j}^{(\ep)}\left(s\right)(\sigma_{i'}\sqrt{2\al_{i'}})(x_\ep(t))\left[\Theta^{(1,\ep)}_{i,j,a_{i'}}(s)dw_{i',a}(s)+
\Theta^{(1,\ep)}_{i,j,b_{i'}}(s)dw_{i',b}(s)\right]\right|\right]\\
&
\preceq \ep\left\{\bbE\left[ \int_0^T\sum_{i'}
\left(a_{i,y_j}^{(\ep)}\left(s\right)(\sigma_{i'}\sqrt{2\al_{i'}})(x_\ep(t))\right)^2\left[\left(\Theta^{(1,\ep)}_{i,j,a_{i'}}(s)\right)^2+
\left(\Theta^{(1,\ep)}_{i,j,b_{i'}}(s)\right)^2\right]ds\right]\right\}^{1/2}.
\end{aligned}\end{align} 
This expression vanishes, as $\ep\to0$,  thanks to the results of Section \ref{sec12.1}.
The other terms forming the martingale $M^{(\eps)}(t)$ can be estimated analogously. This ends the proof of \eqref{zanied}.
\end{proof}

\bigskip

Coming back to \eqref{ixy}, using \eqref{dgwai-1} together with \eqref{lematPoch}, we can write
\begin{align}\label{ixy2}\begin{aligned}
& x_{\ep,q}(t)=\int_0^t\widetilde{\cal
    W}^{(\ep)}_{q} \left(s\right)ds +\int_0^t{\cal M}_q^{(\ep)}\left(ds\right)+\eps
{\cal N}^{(\ep)}_q(t),\quad q=1,2,\end{aligned}\end{align} where $\widetilde{\cal
    W}^{(\ep)}_{q} \left(t\right)$ corresponds (via \eqref{ozn}) to the field
\begin{align*}\begin{aligned}
& \widetilde{\cal
    W}_q(t,x,y) :=\sum_{i,j}\left\{\al_{i,y_j}(y)
\left(\tilde\Theta^{(1)}_{i,j}(t,x,y)a_i(t,x,y)+\tilde\Theta^{(2)}_{i,j}(t,x,y)b_i(t,x,y)\right)\right\}\\
&+W(t,x,y)\cdot \chi_{q,y}(t,x,y).
\end{aligned}\end{align*}
The martingale part is given by \eqref{mart-1}. 
The semi-martingale $\eps{\cal N}^{(\ep)}_q(t)$, which turns out to be negligible (see \eqref{semi} below), is given by
\begin{eqnarray}\label{net}
&&{\cal N}^{(\ep)}_q(t):=
  \chi^{(\ep)}_q\left(0\right)-
  \chi^{(\ep)}_{q}\left( t\right)+\tilde N^{(\ep)}(t)+\int_0^t
\bbU_{\ep}(s)\cdot\tilde\chi_{q,y}^{(\ep)}(s)ds\nonumber
\\
&&
+\sum_{i,j}\int_0^tU_{\ep}(s)\cdot\left[\chi^{(\ep)}_{q,a_i}(s)
a_{i,y}^{(\ep)}(s)+ \chi^{(\ep)}_{q,b_i}(s)
b_{i,y}^{(\ep)}(s)\right]ds,
\end{eqnarray} where $\tilde N^{(\ep)}(t)$ is the process given by \eqref{tldN}.
From \eqref{zanied}, the results of Section \ref{sec12.1} and the argument used in the proof of Lemma \ref{pochodnelem} we conclude that
\begin{equation}\label{semi}
\lim_{\ep\to0}\ep\bbE\left[\sup_{0\leq t\leq
T}\left|{\cal N}^{(\ep)}(t)\right|\right]=0,
\end{equation}
with ${\cal N}^{(\ep)}(t):=({\cal N}^{(\ep)}_1(t), {\cal N}^{(\ep)}_2(t))$.



\subsection{Averaging lemma}

\label{twrergosek}
In this section we present a result, which allows us to average out the "fast variables", i.e. $t/\ep^2,x_\ep(t)/\ep$,
for
 processes of the form  $F^{(\eps)}(t):=\tilde
 F(t/\ep^2,x_\ep(t)/\ep,x_\ep(t))$, that  appear  in the significant terms of the decomposition \eqref{ixy2}. 
The following result allows to replace such terms by their averaged
 counterparts and therefore it  shall be crucial in the limit
identification argument for the trajectory process given by 
\eqref{ixy2}.
%
%
%
%
%
%
%
\begin{lem}\label{ergodyczne}
Assume that
$F:\mbR^{2N}\times\mbR^{2}\to\mbR$ is continuous in all variables and such that $y\mapsto F(\cdot,y)$, $y\in\bbR^2$ is twice differentiable in the $L^p(\nu_*^{y_0})$-sense for any $y_0\in\mbR^2$. 
 Then, the 
 process $F^{(\eps)}(t)$, obtained from $F$ by  formulas \eqref{ozn} and \eqref{tild},  satisfies
\begin{equation}
\label{020704}
\lim_{\ep\to0^+}\bbE\left| \sup_{t\in[0,T]}\left(\int_0^t
F^{(\eps)}\left(s\right)ds-\int_0^t\bar
F(x_\ep(s))ds\right)\right|=0,\quad T>0,
\end{equation}
where
$$
\bar F(y):=\int_{\mbR^{2N}} F(\fa;y)\nu_{*}^y(d\fa).
$$
\end{lem}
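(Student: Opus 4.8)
The plan is to run the corrector (perturbed test function) method. Since the averaged field $\widetilde{\bar F}(t,x,y)=\bar F(y)$ does not depend on the fast variables, one has $\bar F^{(\ep)}(t)=\bar F(x_\ep(t))$ (and $\bar F$ is continuous on $\mbR^2$), so \eqref{020704} is equivalent to showing that $G:=F-\bar F$, which satisfies $\int_{\mbR^{2N}}G(\cdot;y)\,d\nu_*^y=0$ for every $y$, obeys $\lim_{\ep\to0}\bbE\sup_{t\in[0,T]}\big|\int_0^tG^{(\ep)}(s)\,ds\big|=0$. First I would solve the Poisson equation
\begin{equation*}
-\mathcal{L}_y\psi(\cdot;y)=G(\cdot;y),\qquad \int_{\mbR^{2N}}\psi(\cdot;y)\,d\nu_*^y=0,
\end{equation*}
by $\psi(\cdot;y):=\int_0^{+\infty}P_t^yG(\cdot;y)\,dt$; the integral converges in $L^p(\nu_*^y)$, uniformly in $y$, by the spectral gap of Proposition \ref{spectralneStwr} (the rate $\gamma_0$ is independent of $y$). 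The assumed twice $L^p(\nu_*^{y_0})$-differentiability of $y\mapsto F(\cdot,y)$, together with the corrector regularity results of Section \ref{pochodna} (in particular Theorem \ref{lab}), then yields $\psi\in W^{2,p}_{loc}(\mbR^{2N+2})$ for all $p\in[1,+\infty)$, as well as the uniform bounds
\begin{equation*}
\sup_{y\in\mbR^2}\Big(\|\psi(\cdot;y)\|_{L^p(\nu_*^y)}+\|\nabla_\fa\psi(\cdot;y)\|_{L^p(\nu_*^y)}+\|\nabla_y\psi(\cdot;y)\|_{L^p(\nu_*^y)}+\|\nabla^2\psi(\cdot;y)\|_{L^p(\nu_*^y)}\Big)<+\infty
\end{equation*}
of the type \eqref{zalF}.

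Next I would apply the It\^o--Krylov formula (exactly as in the derivation of \eqref{chitylda}) to $\ep^2\psi^{(\ep)}(t)$, with $\psi^{(\ep)}$ built from $\psi$ via \eqref{tild} and \eqref{ozn}. Since both $t\mapsto x_\ep(t)$ and $t\mapsto\bar x_\ep(t)$ solve \eqref{poleweup}, which has no martingale part, only first order terms in the $x$- and $y$-variables occur, so the $W^{2,p}_{loc}$ regularity of $\psi$ in the $\fa$-variable --- the only direction carrying noise --- is enough to justify the formula (after localizing by $\tau_R:=\inf\{t:\ |x_\ep(t)|+\sup_{y\in\mbR^2}|\fa(t/\ep^2;y)|>R\}$, which tend to $+\infty$, with moments, by Section \ref{sec12.1}, so that the drift $\ep^{-1}W+U$ of $x_\ep$ is bounded up to $\tau_R$). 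Using $\widetilde{\mathcal{L}_y\psi}(t,x,y)=-G(t,x,y)$, the leading $O(\ep^{-2})$ contribution equals $-G^{(\ep)}(t)$, and one is left with
\begin{equation*}
\int_0^tG^{(\ep)}(s)\,ds=\ep^2\big[\psi^{(\ep)}(0)-\psi^{(\ep)}(t)\big]+\ep\int_0^tR^{(\ep)}(s)\,ds+\ep\,\mathcal{M}^{(\ep)}(t),\qquad 0\le t\le T\wedge\tau_R,
\end{equation*}
where $R^{(\ep)}$ collects the lower order drift terms --- products of the uniformly $L^p(\nu_*^y)$-bounded processes $(D\psi)^{(\ep)}$, $\psi_y^{(\ep)}$, $\psi_{a_i}^{(\ep)}$, $\psi_{b_i}^{(\ep)}$ with the fields $U_\ep$, $V^{(\ep)}$ and with the Gaussian derivative processes $a_{i,y_j}^{(\ep)}$, $b_{i,y_j}^{(\ep)}$ of \eqref{a-b-y} --- and $\mathcal{M}^{(\ep)}$ is a martingale of the form \eqref{mart-1} with $\chi_{q,a_i}$, $\chi_{q,b_i}$ replaced by $\psi_{a_i}$, $\psi_{b_i}$.

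It then remains to check that each of the three terms on the right vanishes in $\bbE\sup_{t\le T}|\cdot|$. For the boundary term, $\ep^2\bbE\sup_{t\le T}|\psi^{(\ep)}(t)|\to0$ by the uniform $L^p(\nu_*^y)$ bound on $\psi$ and the supremum estimates of Section \ref{sec12.1} (Lemma \ref{SUPREMUM}). For the drift term, $\ep\,\bbE\sup_{t\le T}\big|\int_0^tR^{(\ep)}(s)\,ds\big|\le\ep T\,\bbE\sup_{s\le T}|R^{(\ep)}(s)|$, and each product in $R^{(\ep)}$ is handled by the Cauchy--Schwarz inequality and the $L^2$-supremum bounds of Section \ref{sec12.1} --- in particular $\bbE\sup_{t\le T}(a_{i,y_j}^{(\ep)}(t))^2\preceq\ep^{-\gamma}$ for every $\gamma\in(0,1)$, cf. \eqref{pud} --- exactly as in the proof of Lemma \ref{pochodnelem}, giving $\ep^{1-\gamma}\to0$. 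For the martingale term, Doob's and the Burkholder--Davis--Gundy inequalities give $\bbE\sup_{t\le T}|\mathcal{M}^{(\ep)}(t)|\preceq\big(\bbE\int_0^T\sum_i[(\psi_{a_i}^{(\ep)}(s))^2+(\psi_{b_i}^{(\ep)}(s))^2]\,ds\big)^{1/2}$, which by Section \ref{sec12.1} is bounded uniformly in $\ep$, so the term is $O(\ep)$. Letting $R\to+\infty$ and then $\ep\to0$ yields \eqref{020704}.

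The main obstacle is the It\^o--Krylov step: one must organize the expansion so that the $O(\ep^{-2})$ part is exactly $-G^{(\ep)}$ --- this rests on the identity $\mathcal{L}_y=\mathbb{L}^y+\mathfrak{w}\cdot D$ and on the fact that differentiating $\psi(\tau_x\fa(t;y),y)$ in $x$ against the $W$-part of the drift of $\bar x_\ep$ reproduces the $\mathfrak{w}\cdot D$ term --- and then control the $O(\ep^{-1})$ remainder, which contains the derivative processes $a_{i,y_j}^{(\ep)}$, $b_{i,y_j}^{(\ep)}$ whose suprema are only ``almost bounded'' ($\preceq\ep^{-\gamma}$). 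The crucial input making this possible is the joint regularity of the corrector $\psi$ supplied by Theorem \ref{lab} (proved via the Malliavin calculus of Section \ref{pochodna}); the remaining estimates are routine and parallel those already carried out in Lemma \ref{pochodnelem}.
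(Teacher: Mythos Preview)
Your proposal is correct and follows essentially the same approach as the paper: reduce to the zero-mean case, solve the Poisson equation $-\mathcal{L}_y\Theta=F$ (your $\psi$), apply the It\^o--Krylov formula to $\ep^2\Theta^{(\ep)}(t)$ so that the leading term reproduces $-F^{(\ep)}$, and then kill the boundary, drift, and martingale remainders using Lemma~\ref{SUPREMUM} and the Gaussian supremum bounds of Section~\ref{sec12.1}. The paper's only cosmetic difference is that it splits the drift remainder into an $O(\ep)$ part $R_1^{(\eps)}$ and an $O(\ep^2)$ part $R_2^{(\eps)}$, and it does not spell out the stopping-time localization you mention, but otherwise the arguments coincide.
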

\begin{proof} By considering 
$F'({\frak a},y):=F({\frak a},y)-\bar F(y) $ we can and shall assume that
$F(\cdot,y)$ is of zero $\nu_*^y$-mean.
    Suppose that $\Theta(\cdot,y)$ is the $\nu_*^y$-mean
  zero  solution of the cell problem
\begin{equation}
\label{LF}
-{\cal L}\Theta(\cdot,y)=F(\cdot,y).
\end{equation}
 Thanks to the results of Section \ref{pochodna} we can apply  the
 It\^{o}-Krylov formula to the process
$\Theta^{(\ep)}\left(t\right)$, obtained from $\Theta$ by an
application of \eqref{ozn}.   Using \eqref{LF}  we get
\begin{align}\label{jedgwia}\begin{aligned}
& d
\left[\eps^2\Theta^{(\ep)}\left(t\right)\right]=\Bigg\{-F^{(\eps)}\left(t\right)+\ep
R_1^{(\eps)}(t)
+\eps^2R_2^{(\eps)}(t)\Bigg\}dt\\
&
+\ep\sum_{i}\Bigg\{(\sigma_i\sqrt{2\al_i})(x_\ep(t))
\left[\Theta^{(\ep)}_{a_i}\left( t\right)dw_{i,a}(t)+
\Theta^{(\ep)}_{b_i}\left( t\right)dw_{i,b}(t)\right]\Bigg\}
\end{aligned}\end{align}
for some standard, independent Brownian motions $dw_{i,a}(t),
dw_{i,b}(t)$ and
\begin{align*}
&
R_1^{(\eps)}(t):=\sum_{i,j} \left( F_{i,j}^{(\ep)}(t)a_{i,y_j}^{(\eps)}(t)+
 G_{i,j}^{(\ep)}(t)b_{i,y_j}^{(\eps)}(t)\right)+ V^{(\ep)}(t)\cdot\Theta^{(\ep)}_{y}\left( t\right),\\
&
R_2^{(\eps)}(t)
 :=\sum_{i,j}U^{(\ep)}_{j}\left(t\right)\bigg[\Theta^{(\eps)}_{a_i}\left( t\right) a_{i,y_j}^{(\eps)}(t) +\Theta^{(\ep)}_{b_i}\left( t\right) b_{i,y_j}^{(\eps)}(t) \bigg],
\end{align*}
with
\begin{align}\label{fij}\begin{aligned}
&F_{i,j}:=D^\perp_j\Theta+\mathfrak{w}_j\Theta_{a_i},\quad G_{i,j}:=\mathfrak{w}_j\Theta_{b_i},\quad
i=1,\ldots,N,\,j=1,2.
\end{aligned}\end{align}
In addition, Lemma \ref{SUPREMUM} implies that for any $T,r>0$ we have
$$
\lim_{\ep\to0}\ep\bbE\left\{
\sup_{t\in[0,T]}\bigg[\left|\Theta^{(\ep)}(t)\right|^r+\sum_i\left(\left|\Theta^{(\ep)}_{a_i}(t)\right|^r+\left|\Theta^{(\ep)}_{b_i}(t)\right|^r\right)+\left|\Theta_y^{(\ep)}(t)\right|^r\bigg]\right\}=0.
$$

Estimating as in \eqref{010704} -- \eqref{pommis} we conclude that all
terms appearing with factor $\eps$, or $\eps^2$ in \eqref{jedgwia}
vanish, as $\eps\to0$, hence \eqref{020704} follows. 
\end{proof}


\subsection{Tightness}\label{cias}
Next step in the proof of convergence in law of the processes
$\left(x_\ep(t)\right)_{t\ge0}$, as $\ep\to0$, is establishing the
tightness of their laws  over   $C\left([0,+\infty);\mbR^2\right)$.


From \eqref{ixy2}  we can write 
\begin{equation}
\label{ixy21}
x_{\ep,q}(t)={\cal B}^{(\ep)}_{q} \left(t\right)
 +\int_0^t{\cal M}_{q}^{(\ep)}\left(ds\right)+R^{(\eps)}_q(t)+\eps
{\cal N}^{(\ep)}_q(t),\quad q=1,2, t\ge0,
\end{equation}
where 
\begin{equation}
\label{020705}
{\cal B}^{(\ep)}_{q} \left(t\right):=\int_0^tB_q(x_{\ep}(s))ds
\end{equation}
and 
\begin{align}\label{pziv1}\begin{aligned}
& B_q(y) :=\sum_{i,j}\left\{\al_{i,y_j}(y)
\int_{\bbR^{2N}}\left(\Theta^{(1)}_{i,j}({\frak
    a},y)a_i+\Theta^{(2)}_{i,j}({\frak
    a},y)b_i\right)\right\}\nu_*^y(d{\frak a})\\
&+\int_{\bbR^{2N}}{\frak w}({\frak a})\cdot \chi_{q,y}({\frak a},y)
\nu_*^y(d{\frak a}).
\end{aligned}\end{align}
In addition,
$$
R^{(\eps)}_q(t):=\int_0^t\widetilde{\cal
    W}^{(\ep)}_{q} \left(s\right)ds-{\cal B}^{(\eps)}_q(t).
$$
The martingale term in \eqref{ixy21} is given by \eqref{mart-1}. Its
covariation  is given by 
\begin{align}\label{wachanieM}\begin{aligned}
&
\left\langle \int_0^\cdot{\cal M}_{q}^{(\ep)}\left(ds\right), \int_0^\cdot{\cal
  M}_{q'}^{(\ep)}\left(ds\right) \right\rangle_t=\int_0^tm_{q,q'}^{(\ep)}(s)ds,\quad\textrm{where}\\
&m_{q,q'}^{(\ep)}(s):=2\sum_{i=1}^N\left(\al_i\sigma_i^2\right)(x_\ep(s))\left[\chi_{q,a_i}^{(\eps)}\left(s\right) \chi_{q',a_i}^{(\eps)}\left(s\right) +\chi_{q,b_i}^{(\eps)}\left(s\right) \chi_{q',b_i}^{(\eps)}\left(s\right)\right]
\bigg\},\quad q,q'=1,2.
\end{aligned}\end{align}

An elementary application of Lemma \ref{ergodyczne} implies that
\begin{equation}
\label{040704}
\lim_{\ep\to0}\bbE\left[\sup_{t\in[0,T]}|R^{(\ep)}(t)|\right]=0,\quad T>0.
\end{equation}
Combining \eqref{040704} with \eqref{semi} we conclude the tightness
of $\left(x_\ep(t)\right)_{t\ge0}$ is equivalent with  the
tightness of the laws of  
\begin{equation}
\label{ixy22}
y_{\ep,q}(t)={\cal B}^{(\eps)}_q(t)
 +\int_0^t{\cal M}_{q}^{(\ep)}\left(ds\right),\quad q=1,2, t\ge0,
\end{equation}
over   $C\left([0,+\infty);\mbR^2\right)$. The latter follows,
provided that we show tightness of the family of processes
corresponding to the terms appearing on the right
hand side of  \eqref{ixy22}. Tightness of
$\left({\cal B}^{(\eps)}_q(t)\right)_{t\ge0}$
follows easily from an application of Theorem VI.5.17 of \cite{jacod} and the fact that coefficients $B_q$ are bounded.
 Concerning tightness of the laws of the processes corresponding to the
martingale part, according to Theorem  VI.4.13 of ibid., 
it is a consequence of the respective tightness of 
$$
\sum_{q=1}^2\int_0^tm_{q,q}^{(\ep)}(s)ds,\quad t\ge0,
$$
as $\eps\to0$. Using Lemma \ref{ergodyczne} we conclude that the
latter is a consequence of tightness of the laws of processes
\begin{equation}
\label{030705}
\sum_{q=1}^2\int_0^tA_{q,q}(x_\eps(s))ds,\quad t\ge0,
\end{equation}
with $A_{q,q'}(y)$ given by  
\begin{equation}
\label{Aqq}
A_{q,q'}(y):=2\sum_{i=1}^N\left(\al_i\sigma_i^2\right)(y)\int_{\bbR^{2N}}\left[\left(\chi_{q,a_i} \chi_{q',a_i}\right)\left({\frak
    a},y\right) +\left(\chi_{q,b_i} \chi_{q',b_i}\right)\left({\frak
    a},y\right)\right]\nu_*^y(d {\frak
    a}).
\end{equation}
The latter
follows from yet another application of  Theorem VI.5.17 of \cite{jacod}. 

\subsection{Identification of the limit}\label{identyf}
We have already mentioned that limiting laws of
 $\left(x_\eps(t)\right)_{t\ge0}$ and $\left(y_\eps(t)\right)_{t\ge0}$
 coincide, as 
\begin{equation}
\label{010705}
\lim_{\eps\to0}\bbE\left[\sup_{t\in[0,T]}|x_\eps(t)-y_\eps(t)|\right]=0,\quad T>0.
\end{equation}
Using the It\^{o} formula we conclude from \eqref{ixy22} that  for any function
 $f\in C^2\left(\mbR^2\right)$,
\begin{align*}
&M_\eps(t;f):=f(y_\eps(t))-f(y_\eps(0))-\sum_{q=1}^2\int_0^t\partial_{y_q}f(y_\ep(s))B_{q} \left(x_\eps(s)\right)ds\nonumber\\
&-\frac12\sum_{q,q'=1}^2\int_0^t\partial_{y_q
y_{q'}}^2f(y_\ep(s))A_{q,q'}(x_\eps(s))ds,\quad t\ge0
\end{align*}
is a martingale, where $B_q(y)$, $A_{q,q'}(y)$ are given by
\eqref{pziv1} and \eqref{Aqq}. 
Applying the results of Section \ref{pochodna} we conclude that the
coefficients are continuous. Thanks to \eqref{010705}
we conclude that any limiting law of $(x_\ep(t))_{t\ge0}$, as
$\eps\to0$ has to solve the martingale problem corresponding to the
operator
$$
\overline{\mathcal{L}}f(y)=\sum_{q=1}^2B_q(y)\partial_{y_q}f(y)+\frac12\sum_{q,q'=1}^2A_{q,q'}(y)\partial^2_{y_{q,q'}}f(y),
$$
which by virtue of  Theorem 7.2.1 of \cite{Stroock-Varadhan}   is well
posed. So the limiting law  is uniquely determined. This ends the
proof of Theorem \ref{glowne}.\qed

\subsection{The  case of an arbitrary dimension}

\label{sec5.5}

Because of the notational convenience we have proved Theorem
\ref{glowne}  only in the two dimensional case. The proof in an
arbitrary dimension is virtually the same. Here we discuss briefly how
to modify the respective formulas in order to obtain the expressions
for the drift and diffusivity coefficients $B_q$, $A_{q,q'}$,
$q,q'=1,\ldots,d$ in the  $d-$dimensional situation. Recall that then the
modes of the velocity field given by \eqref{050705} are indexed by the
elements of the set $Z$ defined in \eqref{Z}.

The correctors $\chi_q(\cdot;y)$ corresponding to \eqref{011702} are
given by   the zero $\nu_*^y$-mean solutuions of equations
\begin{equation}\label{011702j}
 -{\cal L}_y\chi_q(\cdot;y)=\mathfrak{w}_q,
\end{equation}
with ${\frak w}=({\frak w}_1,\ldots, {\frak w}_d)$ given by 
$$
 {\frak w}_q({\frak a}):=\sum_{i,l}k_{i,l}b_{i,l,q},\quad q=1,\ldots,d,
$$  
where ${\cal L}_y$ is the generator of the diffusion $\tilde\fa(t;y):=\left(\tilde a_\jj(t;y),\tilde
  b_\jj(t;y)\right)_{\jj\in Z}$, with
\begin{align}
&\tilde a_\jj(t;y):=a_\jj(t;y)\cos(k_i\cdot
z(t,y))+b_\jj(t;y)\sin(k_i\cdot z(t,y)),\nonumber\\
&\tilde b_\jj(t;y):=-a_\jj(t;y)\sin(k_i\cdot
z(t,y))+b_\jj(t;y)\cos(k_i\cdot z(t,y)).\label{tyldab1}
\end{align}
Here $z(t,y)$ is  the solution of \eqref{dynPom}.
The generator takes the form \eqref{gen} with 
\begin{equation}\label{obrot-j}
D_q F:=\sum_\jj k_{i,q}\mathcal{R}_\jj F,\quad q=1,\ldots,d
\end{equation}
and 
\begin{equation}\label{Rjj}
\mathcal{R}_\jj F:=\left( b_\jj\partial_{a_\jj}-
a_\jj\partial_{b_\jj}\right)F,\qquad \jj=(i,l,m)\in Z,\,F\in C^1(\mbR^{2S})
\end{equation}
 (recall that $S$ is the cardinality of $Z$).
Finally, we solve the systems
\begin{align}\label{Theduz12}\begin{aligned}
& \left[{\cal L}_y- \al_\jj(y) \right]\Theta^{(1,q)}_{\jj,j}(\cdot;y)-(k_i \cdot
\mathfrak{w}) \Theta^{(2,q)}_{\jj,j}(\cdot;y)=F_{\jj,j}^q(\cdot;y),\\
& \left[{\cal L}_y- \al_\jj(y) \right]\Theta^{(2,q)}_{\jj,j}(\cdot;y)+(k_i \cdot
\mathfrak{w}) \Theta^{(1,q)}_{\jj,j} (\cdot;y)=G_{\jj,j}^q(\cdot;y),
\end{aligned}\end{align}
with
\begin{align*}
&
F_{\jj,j}^q({\frak a};y):=\delta_{l,j}\left(\delta_{m,q}+D_m\chi_q({\frak
  a})\right) +{\frak w}_j({\frak a})\chi_{q,a_\jj}({\frak a},y),\\
&
G_{\jj,j}^q({\frak a};y):={\frak w}_j({\frak a})\chi_{q,b_\jj}({\frak a},y),\quad\jj=(i,l,m)\in Z, \quad\,j,q=1,\ldots,d.
\end{align*}
Then,
the formulas for the
 coefficients of the limiting diffusions are as follows
\begin{align}\label{wspD}\begin{aligned}
&B_q(y):=\int_{\mbR^{2S}}\mathfrak{w}(\fa)\cdot\chi_{q,y}\left(\fa,y\right) \nu_{*,y}(d\fa)+\sum_{\jj,j}\big\{ \al_{\jj,y_j}(y)\int_{\bbR^{2S}}
\left(\Theta^{(1,q)}_{\jj,j}\left(\fa,y\right)a_\jj+\Theta^{(2,q)}_{\jj,j}\left(\fa,y\right)
  b_\jj\right)\big\}\nu_{*,y}(d\fa),\\
&\\
&A_{q,q'}(y):=2\sum_\jj\left(\al_\jj\sigma_\jj^2\right)(y)\int_{\mbR^{2N}}\Big[\chi_{q,a_\jj}\left(\fa,y\right)\chi_{q',a_\jj}(\fa,y)
+\chi_{q,b_\jj}\left(\fa,y\right)\chi_{q',b_\jj}(\fa,y)\Big]\nu_{*}^y(d\fa)
\end{aligned}\end{align}
for $q,q'=1,\ldots,d$.

\commentout{

\section{A priori estimates}\label{aprioriDow}
\subsubsection{Proof of Theorem \ref{apriori}.}

Let us introduce the denotation for a gradient of a complex valued function
$u:\mbR^d\to\bbC$ as follows
$$
Du=(D_1u,\dots,D_du),
$$
where
$$
D_lu=\dfrac{\partial u}{\partial x_l},\quad l=1,\dots,d.
$$
By $D^2u$ we denote the matrix of the second derivatives
$D^2u=[D_{jl}^2u]_{j,l=1,\dots,d}.$
We will use methods from the proof of Theorem  9.11 from
\cite{Gilbar}. Constant $C$ can change during the proof but it do not depends on  $R$ or on $f$.

Recall that coordinates of symmetric real matrix
$[a_{jl}]_{j,l=1,\ldots,d}$ are constant and satisfy \eqref{ajl}. We know from  Corollary 9.10 of \cite{Gilbar} that for any  $p\in(1,\infty)$
there exists $C>0$ such that the following inequality holds for all complex valued functions
 $w\in W^{2,p}(B_1)$
\begin{equation}\label{SA 58 FAB}
\left\|D^2w\right\|_{p,B_1}\leq
C\Big\|\sum_{j,l=1}^da_{jl}D^2_{jl}w\Big\|_{p,B_1}.
\end{equation}
Fix $0<\sigma<1$ and $\sigma'=(1+\sigma)/2$. Let us introduce a cutting function
 $\eta\in C_0^2(B_1)$ which satisfies $0\leq\eta\leq1,$
$$
\eta(x)=\left\{\begin{array}{ll} 1&,\quad |x|\leq\sigma,\\
0&,\quad |x|\geq \sigma'.\end{array} \right.
$$
Moreover assume that
$$
|D\eta|\leq4/(1-\sigma),\ |D^2\eta|\leq16/(1-\sigma)^2.
$$
For a given function $g:B_R\to\bbC$ define
\begin{equation}\label{tildozn}
\tilde g(x):=g(Rx),\ x\in B_1.
\end{equation}

Recall that $f,u$ satisfy \eqref{gtr}. Now we would like to estimate the $L^p$ norm of the second derivatives of function $u$ on $B_R$ ball.

Observe that
 $$
 D_l \tilde u(x)=RD_l u(Rx) \quad \mbox{and}\quad
D^2\tilde u(x)=R^2D^2u(Rx).
$$
Inserting into $\eqref{gtr}$ the $Rx$ instead of
$x$ and multiplying both sides of the obtained equality by  $R^2$ we obtain the following equality for
$\tilde u$

\begin{equation}\label{pam2}
\sum_{l,j=1}^da_{jl}D_{jl}\tilde u+R\sum_{l=1}^d\tilde b_lD_l\tilde
u +R^2\tilde c \tilde u=R^2\tilde f,\textrm{ on }B_1,
\end{equation}
where (using \eqref{tildozn}),
$$
\tilde b_l(x):=b_l(Rx),\ \tilde c(x):=c(Rx) \textrm{ and }\tilde
f(x):=f(Rx)$$
 (recall that $a_{jl}$ are constants).

 We can see that if $\tilde h(x)=h(Rx)$ then
\begin{equation}\label{cyt3}
||\tilde h||_{p,B_1}=
\left(\int_{B_1}|h(Rx)|^pdx\right)^{1/p}=R^{-\frac{d}{p}}||h||_{p,B(R)}
.
\end{equation}

Then from \eqref{SA 58
FAB} we obtain
\begin{align}
&||D^2 \tilde u||_{p,B_{\sigma}}\leq||D^2(\eta \tilde
u)||_{p,B_1}\leq C \left\|\sum_{j,l=1}^da_{j,l}D^2_{j,l}(\eta \tilde
u)\right\|_{p,B_1}.\nonumber
\end{align} From the Leibniz rule we obtain
\begin{align}
&||D^2 \tilde u||_{p,B_{\sigma}}\leq C\left\|\eta \sum_{j,l=1}^d
a_{jl}D_{jl}^2\tilde u+2 \sum_{l,j=1}^da_{jl}D_j\eta
D_l\tilde u+\sum_{j,l=1}^da_{jl}\tilde uD_{j,l}^2\eta\right\|_{p,B_1}\nonumber\\
&\leq C\Bigg\|\eta
\underbrace{\left(\sum_{j,l=1}^da_{jl}D_{jl}^2\tilde
u+R\sum_{l=1}^d\tilde b_lD_l\tilde u+R^2\tilde c\tilde
u\right)}_{=R^2\tilde f}+\left(2 \sum_{j,l=1}^da_{jl}D_l\eta
D_j\tilde u-R\sum_{l=1}^d\eta \tilde b_lD_l\tilde u\right)\nonumber\\
&-\left(\eta R^2 \tilde c\tilde u-\sum_{j,l=1}^da_{jl}\tilde
uD^2_{j,l}\eta\right)\Bigg\|_{p,B_1}.\nonumber
\end{align}
Using \eqref{pam2} and the triangle inequality we obtain
\begin{align}\label{FAB}\begin{aligned}
&||D^2 \tilde u||_{p,B_{\sigma}}\\
&\leq C\left(R^2||\tilde f||_{p,B_1}+\frac{R}{1-\sigma}||\tilde
b||_{\infty,B_{\sigma'}}\left\|\sum_{l=1}^dD_l\tilde
u\right\|_{p,B_{\sigma'}}+\frac{R^2}{(1-\sigma)^2}||\tilde
c||_{\infty,B_{\sigma'}}||\tilde u||_{p,B_1}\right),
\end{aligned}\end{align}
where
$$
\|\tilde b\|_{\infty,B_{\sigma'}}:=\max\left[\|\tilde
b_l\|_{\infty,B_{\sigma'}},\, l=1,\dots d\right].
$$
Let us introduce
$$
\Phi_k:=\sup_{0<\sigma<1}(1-\sigma)^k||D^k \tilde
u||_{p,B_{\sigma}},\quad k=0,1,2.
$$
Multiplying inequality \eqref{FAB} by $(1-\sigma)^2$ and taking a supremum in both sides in $\sigma$ from $(0,1)$,
we obtain
\begin{equation}\label{nier1}
\Phi_2\leq C(R^2||\tilde f||_{p,B_1}+R||\tilde
b||_{\infty,B_{\sigma'}}\Phi_1+R^2||\tilde
c||_{\infty,B_{\sigma'}}\Phi_0).
\end{equation}
From interpolation theorem, see Theorem 7.28 from
\cite{Gilbar}, p. 173, we know that for any $\ep>0$ there exists
$C>0$ such that
\begin{equation}\label{nier2}
\Phi_1\leq\ep\Phi_2+\frac{C}{\ep}\Phi_0.
\end{equation}
Using inequality \eqref{nier2} in \eqref{nier1} we obtain that
$$
(1-CR\ep||\tilde b||_{\infty,B_{\sigma'}})\Phi_2\leq
C\left[R^2||\tilde f||_{p,B_1}+\left(R^2||\tilde
c||_{\infty,B_{\sigma'}}+R\frac{C}{\ep}||\tilde
b||_{\infty,B_{\sigma'}}\right)\Phi_0\right],
$$
therefore
\begin{align*}
&(1-C\ep
R||\tilde b||_{\infty,B_{\sigma'}})(1-\sigma)^2||D^2\tilde u||_{p,B_{\sigma}}\nonumber\\
&\leq C\left[R^2||\tilde f||_{p,B_1}+\left(R^2||\tilde
c||_{\infty,B_{\sigma'}}+R\frac{C}{\ep}||\tilde
b||_{\infty,B_{\sigma'}}\right)||\tilde u||_{p,B_1}\right].
\end{align*}
Take $\sigma=1/2$ ($\sigma'=3/4$). Let insert $\tilde
u(x)=u(Rx)$ and change variables $x'=Rx$. 
We also use inequality
$$\max\{||\tilde c||_{\infty,B_{\sigma'}},||\tilde
b||_{\infty,B_{\sigma'}}\}\leq C R^k
$$
and $\eqref{cyt3}$, then we obtain
\begin{align*}
&(1-CR^{k+1}\ep)R^{2-d/p}||D^2 u||_{p,B_{R/2}}\\
&\leq
C\left[R^{2-d/p}||f||_{p,B_R}+\left(R^{k+2}+R^{k+1}\frac{C}{\ep}\right)R^{-d/p}||u||_{p,B_R}\right].
\end{align*}
Take
\begin{equation}\label{ep}
\ep:=\frac1{2CR^{k+1}}.
\end{equation}
Then we obtain, that for $p\in (1,+\infty)$ there exists $C>0$
such that for all $R\geq 1$ and complex valued function $u$ which satisfies equality \eqref{gtr} we have
\begin{equation}\label{Kar}
||D^2u||_{p,B_{R/2}}\leq
C\left(||f||_{p,B_R}+R^{2k}||u||_{p,B_R}\right).
\end{equation}
Recall that we would like to have estimation for  $||u||_{2,p,B_{R/2}}$.

From the interpolation theorem (Theorem 7.28 \cite{Gilbar}, p.
173) we know that for any $\ep>0$ there exists $C>0$ such that
\begin{equation}\label{interpol}
||D\tilde u||_{p,B_1}\leq \ep ||D^2 \tilde
u||_{p,B_1}+\frac{C}{\ep}||\tilde u||_{p,B_1}.
\end{equation}
We use $\eqref{interpol}$ to function $\tilde u$ after rescaling
\begin{equation}\label{powyzsze}
||Du||_{p,B_R}\leq \ep R ||D^2 u||_{p,B_R}+\frac{C}{\ep
R}||u||_{p,B_R}.
\end{equation}
In \eqref{powyzsze} we substitute for the radius $R/2$ and obtain using \eqref{Kar} what follows
$$
||Du||_{p,B_{R/2}}\leq \ep \frac{CR}{2}||f||_{p,B_{R}}+\left(\ep
R^{2k+1}\frac{C}{2}+\frac{2C}{\ep R}\right)||u||_{p,B_R}.
$$
Taking
$$
\ep:=\frac{2}{C R^{2k+1}},
$$
we obtain that
\begin{equation}\label{grad}
||Du||_{p,B_{R/2}}\leq
C\left[||f||_{p,B_R}+R^{2k}||u||_{p,B_R}\right].
\end{equation}
Adding side by side inequalities \eqref{Kar} and \eqref{grad} and the norm
$||u||_{p,B_{R/2}}$ we obtain that for $p\in (1,+\infty)$ there exists
constant $C>0$ such that for all $R\geq 1$ and for complex valued functions $u$ which satisfy \eqref{gtr},
where coefficients of this equation satisfy assumptions formulated in Theorem \ref{apriori}, we have
$$
||u||_{2,p,B_{R/2}}\leq
C\left(||f||_{p,B_R}+R^{2k}||u||_{p,B_R}\right).
$$
$\qed$

}

\commentout{

\section{Proofs of Lemmas from Section \ref{rdzen}}\label{aprdzen}
\subsubsection{Proof of Lemma \ref{Lem542}}
This proof is modelled on the proof of an analog lemma from
\cite{Ksiazka}, Section 12.3.
W know that set $\mathcal{C}_0$ (see \eqref{C0}) is dense in
$L^2(\mbR^{2N},\nu_{*})$ (because polynomials are contained in $\mathcal{C}_0$) so it remains to show
 (see \cite{Ethier}, Proposition 3.3, s.17) that
\begin{equation}\label{12.27}
\mathcal{C}_0\subset D(\mathcal{L})
\end{equation}
and that
\begin{equation}\label{12.28}
P_t(\mathcal{C}_0)\subset \mathcal{C}_0,\quad t\geq0.
\end{equation}
Inclusion \eqref{12.27} can be proved using the It\^{o} Lemma for generalized derivatives
 (\cite{Krylov}, Theorem 1, p. 122).

In order to show $\eqref{12.28}$ we prove that for any fixed  $p>p'>2$ and $t>0$, we have
\begin{equation}\label{12.29}
P_t\left(W^{p,2}(\mbR^{2N},\nu_{*})\right)\subset
W^{p',2}(\mbR^{2N},{\nu_{*}}).
\end{equation}
For $f\in W^{p,1}(\mbR^{2N},\nu_{*})$ we write
\begin{equation}\label{12.30}
P_t f(\fa)=\bbE_{\fa} f(\tilde \fa(t)).
\end{equation}
We show that $\partial_{a_i}P_tf\in L^{p'}(\nu_*)$. Similarly we may prove that $\partial_{b_i}P_tf\in L^{p'}(\nu_*)$.

Differentiating side by side equality $\eqref{12.30}$ w.r.t. initial condition we obtain
\begin{align}\label{suma}
\partial_{a_{i}} P_t f(\fa)= \sum_{j=1}^N\bbE_{\fa} \left[\partial_{a_{j}}f(\tilde \ba(t),\tilde \bb(t))\xi_{j}^{(i)}(t)\right]+\sum_{j=1}^N\bbE_{\fa}
\left[\partial_{b_{j}}f(\tilde \ba(t),\tilde
\bb(t))\eta_{j}^{(i)}(t)\right],
\end{align}
where
$$
\xi_{j}^{(i)}(t):=\partial_{a_{i}}\tilde a_{j}^{\fa}(t),\quad
\eta_{j}^{(i)}(t):=\partial_{a_{i}}\tilde b_{j}^{\fa}(t).
$$

Observe that from \eqref{dyf} we have
\begin{align}\label{xieta}\begin{aligned}
&\frac{d\xi_{j}^{(i)}}{dt}=-\alpha_{j}\xi_{j}^{(i)}+\sum_{j'=1}^N\left(k_{j'}^{\perp}\cdot
k_j\right)\tilde b_{j'}\eta_{j}^{(i)}+ \tilde
b_{j}\sum_{j'=1}^N\left(k^{\perp}_{j'}\cdot k_j\right)\eta_{j'}^{(i)},\\
&\frac{d\eta_{j}^{(i)}}{dt}=-\alpha_{j}\eta_{j}^{(i)}+\sum_{j'=1}^N\left(k_{j'}^{\perp}\cdot
k_j\right)\tilde b_{j'}\xi_{j}^{(i)}+ \tilde
a_{j}\sum_{j'=1}^N\left(k^{\perp}_{j'}\cdot
k_j\right)\eta_{j'}^{(i)},\\
&\xi_j^{(i)}(0)=\delta_{i,j},\quad \eta_j^{(i)}(0)=0.
\end{aligned}\end{align}
Let us introduce the following denotation
$$
[\xi_{1}^{(i)}(t),\eta_{1}^{(i)}(t),\dots,\xi_{N}^{(i)}(t),\eta_{N}^{(i)}(t)]=:\Psi^T_i(t),\qquad
\Psi_i(0)=e_i,\ i=1,\dots,N,
$$
where vector $e_i$ has $2N$  zero coordinates except $i$th
coordinate where is $1$. By $M(t)$ we denote the matrix of coefficients
of equality $\eqref{xieta}$. It has dimension $2N\times2N$, its
elements are first degree polynomial of variables $(\ba(t),\bb(t))$.
Equality \eqref{xieta} in matrix form is following
$$
\frac{d}{dt}\Psi_i(t)=M(t)\Psi_i(t),\qquad \Psi_i(0)=e_i.
$$
Using integral form of equation and triangle inequality we obtain
$$
\left|\Psi_i(t)\right|\leq
\left|\Psi_i(0)\right|+\int_{0}^t\left|M(s)\right|\left|\Psi_i(s)\right|ds.
$$
Applying the Gronwall's inequality we have
$$
|\Psi_i(t)|\leq\exp\left\{\int_0^t|M(s)|ds\right\},
$$
Therefore

$$
\bbE_{\nu_{*}} |
\Psi_i(t)|^p\leq\bbE_{\nu_{*}}\exp\left\{p\int_0^t|M(s)|ds \right\}.
$$
So we obtain that
\begin{align}\label{szacxi}
\bbE_{\nu_{*}} |
\Psi_i(t)|^p\leq\bbE_{\nu_{*}}\exp\left\{C\int_0^t\left|\tilde\fa(s)\right|ds
\right\}\leq\frac{1}{t}\int_0^t\bbE_{\nu_*}\exp\left\{Ct|\tilde\fa(s)|\right\}ds,
\end{align}
for some constant $C>0$. Last inequality is a result of convexity of function $\exp$. Observe that $|\tilde\fa(s)|=|\fa(s)|$
(recall that the processes $\tilde\fa(s)$ are given by
\eqref{tyldab}).
 Processes $\ba(t), \bb(t)$ are Gaussian and stationary. So the right hand side of  \eqref{szacxi} is finite.
 The H\"{o}lder's inequality applied to the components of the sum on the right hand side \eqref{suma}
gives us \eqref{12.29}. With the second derivatives we deal with the same way. It ends the proof.
 \qed

}

\commentout{

\subsubsection{Proof of Lemma \ref{AntyLem}}
In order to simplify the notation we omit writing the arguments $\fa,y$. Observe that
\begin{align}\begin{aligned}\label{Antys}
&\cZ\langle
\mathcal{A}F,G\rangle_{\nu_{*}^y}=\sum_{i=1}^N\int_{\mbR^{2N}}s_{b,i}q_{a,i}(F,G)d\ba
d\bb-\sum_{i=1}^N\int_{\mbR^{2N}}s_{a,i}q_{b,i}(F,G)d\ba d\bb,
\end{aligned}\end{align} where
$$
\cZ:=\prod_{i=1}^N2\pi\sigma_j^2,
$$ we define functions
$q_{a,i},q_{b,i},s_{a,i},s_{b,i}$  by

$$q_{a,i}(F,G):=\partial_{a,i}FG\exp\left\{-\frac{a_i^2}{\sigma_i^2}\right\},$$
$$q_{b,i}(F,G):=\partial_{b,i}FG\exp\left\{-\frac{b_i^2}{\sigma_i^2}\right\},$$
we also introduce (recall that $g_i$ was defined in \eqref{gi})
$$
s_{b,i}=g_i b_i\exp\left\{-\sum^N_{\substack{j=1\\\j\neq i
}}\frac{a^2_{j}+b_{j}^2}{\sigma_{j}^2}-\frac{b_i^2}{\sigma_i^2}\right\}
$$
and
$$
s_{a,i}=g_i a_i\exp\left\{-\sum_{\substack{j=1\\j\neq i
}}^N\frac{a^2_{j}+b_{j}^2}{\sigma_{j}^2}-\frac{a_i^2}{\sigma_i^2}\right\}.
$$

Observe that the function $s_{b,i}(\fa)$ is independent of
$a_i$. Similarly function $s_{a,i}(\fa)$  is independent of $b_i$ (see
$\eqref{gi}$). So we can rewrite expression  \eqref{Antys} as follows.
\begin{align}\label{Antys2}\begin{aligned}
&\cZ\langle
\mathcal{A}F,G\rangle_{\nu_{*}}\\
&=\lim_{R\to+\infty}\Bigg[\sum_{i=1}^N\int_{\mbR^{2N-1}}s_{a,i}d\ba_{i}'
d\bb\int_{-R}^Rq_{a,i}da_i-\sum_{i=1}^N\int_{\mbR^{2N-1}}s_{b,i}d\ba
d\bb_{i}'\int_{-R}^Rq_{b,i}db_i\Bigg],
\end{aligned}\end{align}
where
$$
d\ba_{i}'d\bb:=\prod_{\substack{j=1\\j\neq i}}^N da_{j} d\bb,\quad
d\ba d\bb_{i}':=\prod_{\substack{j=1\\j\neq i }}^N d\ba db_{j}.
$$
Observe that integrating by parts expression  $\eqref{Antys2}$
w.r.t. $a_i$ and w.r.t. $b_i$ for fixed $i$ in limits from
$-R$ to $R$ ($R>0$) we obtain, as a continuation of  \eqref{Antys2}
\begin{align}\begin{aligned}\label{dwiegwiazd22}
&\cZ\langle
\mathcal{A}F,G\rangle_{\nu_{*}}=\lim_{R\to+\infty}\sum_{i=1}^N\bigg\{\int_{\mbR^{2N-1}}s_{a,i}[\mathcal{Q}_{a,i}(R)-\mathcal{Q}_{a,i}(-R)]d\ba_{i}'d\bb\\
&-\int_{\mbR^{2N-1}}s_{a,i}d\ba_{i}'d\bb\int_{-R}^R\left[q_{a,i}(G,F)+Q'_{a,i}\right]da_i\\
&-\int_{\mbR^{2N-1}}s_{b,i}[\mathcal{Q}_{b,i}(R)-\mathcal{Q}_{b,i}(-R)]d\ba
d\bb_{i}'+\int_{\mbR^{2N-1}}s_{b,i}d\ba
d\bb_{i}'\int_{-R}^R\left[q_{b,i}(G,F)+Q'_{b,i}\right]db_i\bigg\}.
\end{aligned}\end{align}
We define functions
$Q'_{a,i},Q'_{b,i},\mathcal{Q}_{a,i}(R),\mathcal{Q}_{b,i}(R)$
 as follows
\begin{align*}
&Q'_{a,i}:=-2FG\frac{a_i}{\sigma_i^2}\exp\left\{-\frac{a_i^2}{\sigma_i^2}\right\},\\
&Q'_{b,i}:=-2FG\frac{b_i}{\sigma_i^2}\exp\left\{-\frac{b_i^2}{\sigma_i^2}\right\},\\
&\mathcal{Q}_{a,i}(\ba_{R}',\bb,R):=G(\ba_R',\bb)F(\ba_R',\bb)\exp\left\{-\frac{R^2}{\sigma_i^2}\right\},\nonumber\\
&\mathcal{Q}_{b,i}(\ba,\bb_{R}',R):=G(\ba,\bb_R')F(\ba,\bb_R')\exp\left\{-\frac{R^2}{\sigma_i^2}\right\},\nonumber
\end{align*}
where $\ba_R'$ and $\bb_R'$ are equal correspondingly to vectors $\ba$ and
$\bb$ except $i$th coordinate where is $R$.

Observe that the first and the third integral in $\eqref{dwiegwiazd22}$ tends to zero because of the
 $\exp\{-R^2/\sigma_i^2\}$ term.
 On the other hand observe that
$$
\lim_{R\to+\infty}\int_{\mbR^{2N-1}}s_{b,i}d\ba
d\bb_{i}'\int_{-R}^{R}Q'_{b,i}db_i=\lim_{R\to+\infty}\int_{\mbR^{2N-1}}s_{a,i}d\ba_{i}'d\bb\int_{-R}^{+R}Q'_{a,i}da_i.
$$
We obtain that \eqref{dwiegwiazd22} is equal to $ -\cZ\langle
F,\mathcal{A}G\rangle_{\nu_{*}}.$

\subsubsection{Proof of Lemma \ref{spectralneStwr}}
In order to prove \eqref{spectralgap2} it is enough to show that
\begin{equation}\label{pomocnicze}
\langle\mathcal{L} F,F\rangle_{\nu_{*}}\leq
-\gamma_0||F||_{L^2(\nu_*)}^2
\end{equation}
for functions $F\in D(\mathcal{L})$ which satisfy
\eqref{4.21}. Recall that in order to simplify the notation we omit writing the parameter $y$.

We will show that inequality \eqref{pomocnicze} holds for functions
$F\in\mathcal{C}_0$ which satisfy \eqref{4.21} (we know that this is
a common core of generators  $\mathcal{L}$ and $L_{OU}$). Observe
that
$$
-\langle \mathcal{L}F,F\rangle_{\nu_{*}}=-\langle
L_{OU}F,F\rangle_{\nu_{*}}-\langle \mathcal{A}F,F\rangle_{\nu_{*}},
$$
where $L_{OU}$ is a generator of the Ornstein-Uhlenbeck processes defined in \eqref{genOrnstein}.

We know, from \eqref{spectralgapOU2} that
$$
\langle L_{OU} F,F\rangle_{\nu_{*}}\leq
-\gamma_0||F||_{L^2(\nu_*)}^2.
$$
In order to complete the proof of  \eqref{pomocnicze} it suffices to show that
$\langle \mathcal{A}F,F\rangle\equiv0$. It follows from Lemma
\ref{AntyLem} (take $F=G$). We show
\eqref{pomocnicze} for functions  $F\in\mathcal{C}_0$. In order to show this inequality for any
 $F\in D(\mathcal{L}_y)$ we proceed a standard argument  approximating $F$ with an elements from the core $\mathcal{L}$ in the graph norm of $\mathcal{L}$.

 }

\section{Regularity of the corrector}
\label{pochodna}

\subsection{Corrector problem}

\label{CP}

The present section is concerned with regularity of  solutions
$\Xi:\bbR^{2N+2}\to \bbC$ of the  equation
\begin{equation}\label{PT-91}
\mathcal{L}_y\Xi(\fa;y)+c(\fa;y)\Xi(\fa;y)=f(\fa;y),\quad (\fa,y)\in \bbR^{2N+2}.
\end{equation}
A complex valued function
$f:\bbR^{2N+2}\to \bbC$ is assumed to satisfy
\begin{align}\label{azl2}\begin{aligned}
\sum_{m=0}^2
\sup_{y\in\mbR^2}\|\nabla_y^mf(\cdot,y)\|_{L^{p}(\nu_{*}^y)}<+\infty,\quad
\textrm{for each } p\in [1,+\infty).
\end{aligned}\end{align}

Concerning the function $c$ we will consider two cases: either 1)
$c\equiv0$ and then we assume
$$
\int_{\bbR^{2N}}f(\fa;y)\nu_*^y(d\fa)=0,\quad y\in\bbR^2,
$$
 or
2) $c(\fa,y)=-\alpha_i(y)+\textrm{i}q(\fa;y)$ (i - the imaginary unit), where $i\in\{1,\dots,N\}$ is fixed
while $q$ is a real valued polynomial of the second degree in the variable
$\fa$. The coefficients of the polynomial $q(\fa;y)$
are assumed to be $C^2_b(\mbR^2)$ regular functions of the variable $y$.
The operator $\mathcal{L}_y$ was defined in
\eqref{gen}. 

According to the results of Section \ref{korekotsek} and formula
\eqref{Feynamnkac}, under the above assumptions there exists a unique solution
to  \eqref{PT-91}, which in the case 1) satisfies 
$$
\int_{\bbR^{2N}}\Xi(\fa;y)\nu_*^y(d\fa)=0,\quad y\in\bbR^2.
$$
Using the Feynman-Kac formula we obtain that
\begin{align}\label{Feynamnkac1}\begin{aligned}
&\Xi(\fa;y)=\int_0^{+\infty}\bbE\Bigg[\exp\left\{\int_0^tc(\tilde\fa^{\fa}(s;y),y)ds\right\}f(\tilde\fa^{\fa}(t;y);y)\Bigg]dt,\quad \fa\in\bbR^{2N}.
\end{aligned}\end{align}
Here $\tilde\fa^\fa(t,y)=\left(\tilde a_i^\fa(t,y),\tilde
  b_i^\fa(t,y)\right)$ is
the solution of \eqref{dyf2} satisfying $\tilde\fa^\fa(0,y)=\fa$. Thanks
to \eqref{pspectral}
we conclude from \eqref{Feynamnkac1} that for each $p\in(1,+\infty)$
we have
\begin{equation}
\label{010709}
\left\|\Xi(\cdot;y)\right\|_{L^p(\nu_{*}^y)}\preceq \left\|f(\cdot;y)\right\|_{L^p(\nu_{*}^y)}\quad y\in\bbR^2.
\end{equation}

\subsection{$L^p$ regularity of  the corrector in the ${\frak a}$-variable}

\label{Malavain}

Our first result concerns the $L^p$ regularity of the solutions of the equation
\begin{equation}\label{PT-91a}
-\mathcal{L}_y\Xi(\fa;y)=\ff(\fa;y),\quad (\fa,y)\in \bbR^{2N+2},
\end{equation}
in the $\fa$ variable.
Here $\ff:\bbR^{2N+2}\to \mathbb C$  is such that $\ff(\cdot,y) \in
L^q(\nu_{*}^y)$ for some $q>1$.
Thanks to \eqref{inv1} we conclude that
 \begin{equation}
\label{050707}
\int_{\bbR^{2N}}\ff(\fa;y)\nu_*^y(d\fa)=0,\quad y\in\bbR^2,
\end{equation}
 is a necessary condition for
its solvability.
Using \eqref{Feynamnkac1} we can write
\begin{equation}\label{pomoc}
\Xi(\fa,y)=\int_0^{+\infty}P_t^{y}\ff(\fa;y)dt=\int_0^{+\infty}\bbE\left[\ff(\tilde\fa^{\frak a}(t,y),y)\right]dt.
\end{equation}
\begin{twr}\label{TwProf}
Assume  that  $\Xi$ is given by \eqref{pomoc} and $q\in(1,+\infty)$.
Then, for any $p\in[1,q)$ there exists $C>0$ (independent of $y$) such
that
\begin{equation}\label{Proff1}
\left\|\nabla_\fa\Xi(\cdot,y)\right\|_{L^{p}(\nu_{*}^y)}+\left\|\nabla_\fa^2\Xi(\cdot,y)\right\|_{L^{p}(\nu_{*}^y)}\le
C\left\|\ff(\cdot,y)\right\|_{L^q(\nu_{*}^y)},\quad
y\in\bbR^2
\end{equation}
for any $\ff:\bbR^{2N+2}\to \mathbb C$  such that $\ff(\cdot,y) \in
L^q(\nu_{*}^y)$ for all $y\in\bbR^2$.
\end{twr}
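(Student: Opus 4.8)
The plan is to combine the probabilistic representation $\Xi=\int_0^{+\infty}P_t^{y}\ff\,dt$ from \eqref{pomoc} with a Bismut--Elworthy--Li (Malliavin integration by parts) formula for $\nabla_\fa P_t^{y}$, using the semigroup property and the spectral gap \eqref{pspectral} to handle large times, while the Hessian bound is extracted from the equation $-\mathcal{L}_y\Xi=\ff$ itself rather than by differentiating the semigroup. The cornerstone is the smoothing estimate: for every $1\le p<r<+\infty$ there is $C>0$, uniform in $y$, such that $\big\|\nabla_\fa P_\tau^{y}g(\cdot;y)\big\|_{L^p(\nu_{*}^y)}\le C\tau^{-1/2}\big\|g(\cdot;y)\big\|_{L^r(\nu_{*}^y)}$ for $\tau\in(0,1]$.

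To prove this smoothing estimate, note that in \eqref{dyf2} the noise is additive with a constant, invertible diffusion matrix $\sigma_y$ satisfying $\|\sigma_y^{-1}\|\preceq1$ (by \eqref{gamma}), so the derivative flow $J_t=\nabla_\fa\tilde\fa^{\fa}(t;y)$ solves the \emph{deterministic} linear ODE $\dot J_t=A_tJ_t$, $J_0=\mathrm{Id}$, whose matrix has entries affine in $\tilde\fa^{\fa}(t;y)$, hence $|A_t|\preceq 1+|\tilde\fa^{\fa}(t;y)|$. The key algebraic fact is the rotation identity $|\tilde\fa^{\fa}(t;y)|=|\fa^{\fa}(t;y)|$, where $\fa^{\fa}(t;y)$ is the Ornstein--Uhlenbeck process generated by $\mathbb L^y$ started at $\fa$ (each pair $(\tilde a_i,\tilde b_i)$ is a rotation of $(a_i,b_i)$); this reduces everything to Gaussian computations and gives $\bbE\exp\{\lambda\int_0^t|\tilde\fa^{\fa}(s;y)|\,ds\}\preceq e^{\lambda t|\fa|}$, uniformly in $y$ for $t\le1$. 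From the formula $\partial_{\fa_k}P_\tau^{y}g(\fa;y)=\tau^{-1}\bbE\big[g(\tilde\fa^{\fa}(\tau;y);y)\int_0^\tau(\sigma_y^{-1}J_se_k)\cdot dW_s\big]$, the Burkholder--Davis--Gundy inequality and $|J_s|\le\exp\{\int_0^s|A_u|\,du\}$ show that the stochastic integral has $L^{r'}(\bbP)$-norm $\preceq\tau^{-1/2}e^{c|\fa|}$; applying H\"older in the $\bbP$-expectation, then taking $L^p(\nu_{*}^y)$-norms in $\fa$ and using H\"older once more together with the invariance $\int P_\tau^y|g|^r\,d\nu_*^y=\int|g|^r\,d\nu_*^y$ and $\int e^{c'|\fa|}\,\nu_*^y(d\fa)\preceq1$ (Gaussian, uniform in $y$ by \eqref{gamma}), yields the claim.

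Since $\nu_*^y$ is a probability measure it suffices to take $p\in(1,q)$; fix such $p$. For the gradient, differentiate \eqref{pomoc} and split the time integral: on $(0,1]$ the estimate with $r=q$ gives the integrable bound $\preceq t^{-1/2}\|\ff(\cdot;y)\|_{L^q(\nu_*^y)}$, while for $t\ge1$ one writes $P_t^{y}=P_1^{y}P_{t-1}^{y}$, applies the estimate at $\tau=1$, $r=q$, and then \eqref{pspectral} (legitimate since $\int\ff\,d\nu_*^y=0$ by \eqref{050707}) to obtain $\preceq e^{-\gamma(q)(t-1)}\|\ff(\cdot;y)\|_{L^q(\nu_*^y)}$; integrating gives $\|\nabla_\fa\Xi(\cdot;y)\|_{L^p(\nu_*^y)}\preceq\|\ff(\cdot;y)\|_{L^q(\nu_*^y)}$. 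For the Hessian, standard interior regularity for the uniformly elliptic, smooth-coefficient operator $\mathcal{L}_y$ gives $\Xi(\cdot;y)\in W^{2,p}_{loc}(\mbR^{2N})$ with $\mathcal{L}_y\Xi=-\ff$ a.e.; rewriting this via \eqref{gen} as $\mathbb L^y\Xi=-\ff-\mathfrak{w}\cdot D\Xi$ and using $|\mathfrak{w}\cdot D\Xi|\preceq|\fa|^2|\nabla_\fa\Xi|$ with H\"older ($\tfrac1p=\tfrac1s+\tfrac1r$ for some $p<r<q$, $\||\fa|^2\|_{L^s(\nu_*^y)}\preceq1$) and the gradient bound just proved, one gets $\|\mathbb L^y\Xi(\cdot;y)\|_{L^p(\nu_*^y)}\preceq\|\ff(\cdot;y)\|_{L^q(\nu_*^y)}$. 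Finally I invoke the maximal $L^p(\nu_*^y)$-regularity for Ornstein--Uhlenbeck operators (boundedness of the second-order Gaussian Riesz transforms, uniform in $y$ after the rescaling $a_i\mapsto a_i/\sigma_i(y)$ and by \eqref{gamma}; cf. \cite{Ksiazka}), $\|\nabla_\fa^2\Xi(\cdot;y)\|_{L^p(\nu_*^y)}\preceq\|\mathbb L^y\Xi(\cdot;y)\|_{L^p(\nu_*^y)}+\|\Xi(\cdot;y)\|_{L^p(\nu_*^y)}$, and combine it with \eqref{010709} to conclude \eqref{Proff1}; a brief approximation $P_\delta^y\Xi\to\Xi$ as $\delta\downarrow0$ places $\Xi(\cdot;y)$ in the $L^p$-domain of $\mathbb L^y$ so that the maximal-regularity estimate applies.

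The main obstacle is the smoothing estimate of the second paragraph: one must control the Bismut weight so that its moments grow no faster than $e^{c|\fa|}$ in the starting point and no faster than $\tau^{-1/2}$ as $\tau\downarrow0$, \emph{uniformly} in the slow parameter $y$. This rests on the drift of \eqref{dyf2} growing only linearly, on the rotation identity $|\tilde\fa^{\fa}(t;y)|=|\fa^{\fa}(t;y)|$ that reduces all moment bounds to the Gaussian Ornstein--Uhlenbeck process, and on the $\nu_*^y$-integrability of $e^{c|\fa|}$; it is precisely the need to absorb this exponential weight via H\"older (together with the quadratic factor $|\fa|^2$ produced by $\mathfrak{w}\cdot D$) that forces the loss of integrability $p<q$.
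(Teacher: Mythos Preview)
Your proof is correct and follows essentially the same route as the paper: the gradient bound via Bismut--Elworthy--Li for $t\in(0,1)$ (the paper's \eqref{bel}) combined with the factorization $P_t^y=P_1^yP_{t-1}^y$ and the spectral gap for $t\ge1$ (the paper wraps this step in an explicit Hairer--Mattingly control supported on $[0,1]$, cf.\ \eqref{xitheta}--\eqref{100707}, but the content is identical), and the Hessian bound via $\mathbb L^y\Xi=-\ff-\mathfrak w\cdot D\Xi$ together with Meyer's inequality for the Ornstein--Uhlenbeck generator (the paper cites \cite{Nualart}, Theorem~1.5.1). One small slip: in your second paragraph the $L^{r'}(\bbP)$-norm of the stochastic integral is $\preceq\tau^{1/2}e^{c|\fa|}$, and the factor $\tau^{-1/2}$ appears only after multiplying by the BEL prefactor $\tau^{-1}$.
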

The proof of the above result is presented in Section \ref{sec6.2} but
first we apply it to conclude the following.
\begin{corollary}
\label{cor010709}
Under the assumptions of Section $\ref{CP}$ for any $1\le p<q<+\infty$
there exists $C>0$ such that
\eqref{PT-91} satisfies 
\begin{equation}\label{Proff}
\left\|\Xi(\cdot,y)\right\|_{W^{2,p}(\nu_{*}^y)}\le
C\left\|f(\cdot,y)\right\|_{L^{q}(\nu_{*}^y)}\quad
\mbox{ for all }y\in\bbR^2,\,f\in L^q(\nu_*^y).
\end{equation}
\end{corollary}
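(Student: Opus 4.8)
The plan is to reduce \eqref{PT-91} to the pure cell problem \eqref{PT-91a} treated in Theorem \ref{TwProf} by transferring the zero-order term to the right-hand side, and then to bound the resulting source in $L^q(\nu_*^y)$ by $\|f(\cdot,y)\|_{L^q(\nu_*^y)}$ using the a priori estimate \eqref{010709}. When $c\equiv0$ this is immediate: $\Xi(\cdot;y)$ is exactly the zero-$\nu_*^y$-mean solution of $-\mathcal{L}_y\Xi=f$, so \eqref{pomoc} applies with $\ff=f$; by \eqref{azl2} we have $f(\cdot,y)\in L^s(\nu_*^y)$ for every $s$, hence Theorem \ref{TwProf} with the exponent pair $(p,q)$ gives $\|\nabla_\fa\Xi(\cdot,y)\|_{L^p(\nu_*^y)}+\|\nabla_\fa^2\Xi(\cdot,y)\|_{L^p(\nu_*^y)}\le C\|f(\cdot,y)\|_{L^q(\nu_*^y)}$. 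Since $\nu_*^y$ is a probability measure, $\|\Xi(\cdot,y)\|_{L^p(\nu_*^y)}\le\|\Xi(\cdot,y)\|_{L^q(\nu_*^y)}\preceq\|f(\cdot,y)\|_{L^q(\nu_*^y)}$ by \eqref{010709}, and summing the three pieces yields \eqref{Proff}.

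For the second case, $c(\fa,y)=-\alpha_i(y)+\mathrm{i}q(\fa;y)$, I would rewrite \eqref{PT-91} as $-\mathcal{L}_y\Xi=\ff$ with $\ff:=c\,\Xi-f$. Two preliminary observations are needed. First, $\int_{\bbR^{2N}}\ff\,d\nu_*^y=-\int_{\bbR^{2N}}\mathcal{L}_y\Xi\,d\nu_*^y=0$ by \eqref{inv1}, so $\ff$ meets the compatibility condition \eqref{050707}. Second, by \eqref{010709} together with \eqref{azl2} we have $\Xi(\cdot,y)\in\bigcap_{s<\infty}L^s(\nu_*^y)$; since $q(\cdot;y)$ is a polynomial of degree two and $\nu_*^y$ is a centered Gaussian product measure whose variances are bounded above and below uniformly in $y$ by \eqref{gamma}, Hölder's inequality gives $\ff(\cdot,y)\in\bigcap_{s<\infty}L^s(\nu_*^y)$ as well. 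Note also that $\Xi(\cdot;y)$ differs from the zero-mean solution $\int_0^{+\infty}P_t^y\ff\,dt$ of $-\mathcal{L}_y(\cdot)=\ff$ only by an $\fa$-independent constant, which its $\fa$-derivatives do not see; hence Theorem \ref{TwProf} may be applied to $\Xi$ itself.

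Now fix $1\le p<q<+\infty$ and choose any $r$ with $p<r<q$. Applying Theorem \ref{TwProf} with the exponent pair $(p,r)$ gives $\|\nabla_\fa\Xi(\cdot,y)\|_{L^p(\nu_*^y)}+\|\nabla_\fa^2\Xi(\cdot,y)\|_{L^p(\nu_*^y)}\le C\|\ff(\cdot,y)\|_{L^r(\nu_*^y)}$. It remains to estimate $\|\ff(\cdot,y)\|_{L^r(\nu_*^y)}\le\gamma_0^{-1}\|\Xi(\cdot,y)\|_{L^r(\nu_*^y)}+\|q(\cdot;y)\Xi(\cdot;y)\|_{L^r(\nu_*^y)}+\|f(\cdot,y)\|_{L^r(\nu_*^y)}$. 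The middle term is handled by Hölder with exponents $r_1,q$, where $1/r_1=1/r-1/q$, giving $\|q(\cdot;y)\|_{L^{r_1}(\nu_*^y)}\|\Xi(\cdot;y)\|_{L^q(\nu_*^y)}$; here $\sup_{y}\|q(\cdot;y)\|_{L^{r_1}(\nu_*^y)}<+\infty$ because the coefficients of $q$ are bounded and the Gaussian moments of $\nu_*^y$ are bounded uniformly in $y$, while $\|\Xi(\cdot;y)\|_{L^q(\nu_*^y)}\preceq\|f(\cdot,y)\|_{L^q(\nu_*^y)}$ by \eqref{010709}. Using in addition $\|g\|_{L^s(\nu_*^y)}\le\|g\|_{L^q(\nu_*^y)}$ for $s\le q$, we obtain $\|\ff(\cdot,y)\|_{L^r(\nu_*^y)}\preceq\|f(\cdot,y)\|_{L^q(\nu_*^y)}$ with constant independent of $y$. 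Combined with the zero-order estimate $\|\Xi(\cdot,y)\|_{L^p(\nu_*^y)}\preceq\|f(\cdot,y)\|_{L^q(\nu_*^y)}$ exactly as in the first case, this proves \eqref{Proff}.

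The only point requiring genuine care is the loss of integrability caused by the quadratic weight $q$ in the reduction to \eqref{PT-91a}: this is why one must interpolate through an intermediate exponent $r\in(p,q)$ and why the full scale of a priori bounds \eqref{010709}, valid for all exponents, is used rather than a single one. Keeping the constant uniform in $y$ rests on the uniform bounds \eqref{gamma} for the variances of $\nu_*^y$ and on the boundedness of the coefficients of $q$ and of $\alpha_i^{-1}$; the rest is routine bookkeeping.
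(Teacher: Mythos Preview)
Your argument is correct and follows the same route as the paper: rewrite \eqref{PT-91} as $-\mathcal{L}_y\Xi=\ff$ with $\ff=-c\,\Xi+f$ (up to an irrelevant sign), check $\ff\in L^s(\nu_*^y)$ for all $s$ via \eqref{010709} and Gaussian moments, and then invoke Theorem \ref{TwProf} together with \eqref{010709}. Your observation that in case 2 the Feynman--Kac solution $\Xi$ and the zero-mean solution $\int_0^\infty P_t^y\ff\,dt$ differ only by an $\fa$-constant, so that \eqref{Proff1} still controls $\nabla_\fa\Xi$ and $\nabla_\fa^2\Xi$, is a point the paper leaves implicit.

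The one refinement you add is the intermediate exponent $r\in(p,q)$: applying Theorem \ref{TwProf} with the pair $(p,r)$ and then absorbing the quadratic weight $q(\cdot;y)$ via H\"older so as to land on $\|\Xi\|_{L^q}\preceq\|f\|_{L^q}$. This is exactly what is needed to obtain the bound in terms of $\|f(\cdot,y)\|_{L^q(\nu_*^y)}$ as stated, rather than in terms of a higher $L^{q'}$ norm of $f$; the paper's two-line proof says only that \eqref{Proff} is ``a direct consequence of \eqref{Proff1} and \eqref{010709}'' and does not make this interpolation explicit. Your version is therefore slightly sharper in its bookkeeping while being otherwise identical in spirit.
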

\proof
We let
\begin{equation}
\label{fff}\ff(\fa;y)=-c(\fa;y)\Xi(\fa;y)+f(\fa;y).
\end{equation}
Thanks to \eqref{azl2} and \eqref{010709} we conlcude that
$\ff(\cdot,y) \in
L^q(\nu_{*}^y)$  for any $q\in[1,+\infty)$ and $y\in\bbR^2$. Then \eqref{Proff} is a
direct  consequence of \eqref{Proff1} and \eqref{010709}.
\qed

\subsection{Proof of Theorem \ref{TwProf}}

\label{sec6.2}

We show first that for each $p\in[1,q)$ there exists $C>0$ such that
\begin{equation}
\label{010706}
\|\nabla_{\frak a}\Xi(\cdot,y)\|_{L^p(\nu_*^y)}\le
C\left\|\ff(\cdot,y)\right\|_{L^q(\nu_{*}^y)},\quad y\in\bbR^2.
\end{equation}
In the proof we focus only on estimating  the $L^p(\nu_{*}^y)$ norm of 
 $\Xi_{b_i}(\cdot,y)$, $i=1,\dots,N$. The argument for  $\Xi_{a_i}(\cdot,y)$
is similar.
 In addition, we assume that $\ff$ is differentiable in the $\fa$ variable. The constant $C>0$ in estimate \eqref{010706} turns out not to depend on $\nabla_{\frak a}\ff$ so we can relax this assumption by approximation.

From \eqref{pomoc} we obtain
\begin{equation}\label{mamy}
\Xi_{b_j}(\fa,y)=\int_0^{+\infty}v_j(t)dt,
\end{equation}
where 
\begin{eqnarray}
\label{010706d}
&&v_j(t):=\partial_{b_j}P_t^y\ff(\fa)=\bbE\left[(\nabla_\fa \ff)( \tilde
\fa^{\frak a}(t,y))\cdot D_{b_j}\tilde
\fa^{\frak a}(t,y)\right]\\
&&
=\sum_{i}\bbE\left[\ff_{a_i}(\tilde \fa^{\frak a}(t,y),y)
\xi_{i,j}(t)+\ff_{b_i}(\tilde \fa^{\frak a}(t,y),y)\eta_{i,j}(t)\right]\nonumber
\end{eqnarray}
and $D_{b_j}\tilde
\fa^{\frak a}(t,y)=\left(\xi_{i,j}(t),
  \eta_{i,j}(t)\right)_{i=1,\ldots,N}$ is  the Fr\'{e}chet derivative
of the stochastic flow $\fa\mapsto \tilde \fa^{\frak a}(t,y)$, with
$\xi_{i,j}(t):=\tilde a^{\frak a}_{i,b_j}(t,y)$ and
$\eta_{i,j}(t):=\tilde b^{\frak a}_{i,b_j}(t,y)$.

Differentiating \eqref{dyf2} with respect to the initial condition we
conclude that
\begin{align}
\label{070707}
\begin{aligned}
&\frac{d\xi_{i,j}}{dt}=-\alpha_{i}\xi_{i,j}+\sum_{i'}\delta\left(k_{i},
k_{i'}\right) \left(\tilde b^{\frak a}_{i'}\eta_{i,j}+
\tilde b_{i}^{\frak a}\eta_{i',j}\right),\\
&\frac{d\eta_{i,j}}{dt}=-\alpha_{i}\eta_{i,j}-\sum_{i'}\delta\left(k_{i},
k_{i'}\right)\left(\tilde a^{\frak a}_{i}\eta_{i',j}+
\tilde b_{i'}^{\frak a}\xi_{i,j}\right),\\
& \xi_{i,j}(0):=0,\ \ \eta_{i,j}(0):=\delta_{i,j}, \
i,j=1,\dots,N,
\end{aligned}\end{align}
where $\delta_{i,j}$ is the Kronecker symbol, i.e.  $\delta_{i,i}=1$ and $\delta_{i,j}=0$, if $i\not=j$.

We shall prove that
\begin{equation}
\label{080108}
\|v_j(t)\|_{L^p(\nu_*^y)}\preceq 
\left\{
\begin{array}{ll}
e^{-\gamma(q)t}\|\ff(\cdot,y)\|_{L^q(\nu_*^y)},& t\ge 1,\\
&\\
\dfrac{1}{t^{1/2}}\|\ff(\cdot,y)\|_{L^q(\nu_*^y)}&t\in(0,1),
\end{array}
\right.
\end{equation}
where $\gamma(q)$ is the same as in \eqref{pspectral}. Estimate
\eqref{010706}  then follows from the above bound and formula \eqref{mamy}.

Consider first the case when $t\ge1$. For given  ${\frak
  g}(t)=\left(g_i^{(a)}(t),g_i^{(b)}(t)\right)_{i=1,\ldots,N}$ with
$g_i^{(a)}, g_i^{(b)} \in L^2[0,+\infty)$, $i=1,\ldots,N$ we let
{\begin{align}
\label{030707x}
&\|\!|{\frak g}\|\!|_{r,t}:=
\sup_{y\in\bbR^2}\left\{\sum_{i}\Bigg\{\left\{\int_{\bbR^{2N}}\nu_*^y(d\fa)\bbE\left[\frac1t\int_0^t|g_i^{(a)}{(s,y,\fa)}|^2ds\right]^{r/2}\right\}^{1/r}\right.\nonumber\\
&\left.+\sum_{i}\left\{\int_{\bbR^{2N}}\nu_*^y(d\fa)\bbE\left[\frac1t\int_0^t|g_i^{(b)}{(s,y,\fa)}|^2ds\right]^{r/2}\right\}^{1/r}\Bigg\}\right\}.
\end{align}
We shall write $\|\!|{\frak g}\|\!|_{r}:=\|\!|{\frak g}\|\!|_{r,1}$.}
Let also
${\frak h}(t)=(h_i^{(a)}(t),h_i^{(b)}(t))_{i=1,\ldots,N}$, where
\begin{equation}\label{ggg}
h_i^{(a)}(t):=\int_0^tg_i^{(a)}(s)ds,\quad h_i^{(b)}(t):=\int_0^tg_i^{(b)}(s)ds,\quad t\ge0.
\end{equation}
Treating the solution $\tilde \fa^{\frak a}(t,y; w)$ of \eqref{dyf2} as the functional of the Wiener process $w(t)=(w_{i,a}(t),w_{i,b}(t))_{i=1,\ldots,N}$,
we define the Malliavin derivative of $\tilde \fa^{\frak a}(t,y;w)$ in the direction ${\frak h}$
\begin{align*}
&\mathcal{D}_{\frak h}\tilde \fa^{\frak a}(t,y;w):=\lim_{\ep\to0}\frac{1}{\ep}\left\{\tilde \fa^{\frak a}(t,y;w+\ep
{\frak h})-\tilde \fa^{\frak a}(t,y;w)\right\},
\end{align*}
 where the limit above is understood in the $L^2$ sense.

 
 Denoting
$\zeta_i(t):=\mathcal{D}_{\frak h} \tilde a^{\frak a}_i(t,y;w)$ and 
$\theta_i(t):=\mathcal{D}_{\frak h} \tilde b^{\frak a}_i(t,y;w)$,
$i=1,\ldots,N$ the components of the Malliavin derivative, we can see, from \eqref{dyf2},
that they satisfy
\begin{align}\label{zetatheta}\begin{aligned}
&\frac{d\zeta_{i}}{dt}=-\alpha_{i}\zeta_{i}+\sum_{i'} \delta\left(k_{i},
k_{i'}\right)\left(\tilde b_{i}^{\frak a}\theta_{i'}+
\tilde b_{i'}^{\frak a}\theta_{i}\right)+\sqrt{2\alpha_i}\sigma_ig_i^{(a)},\\
&\frac{d\theta_{i}}{dt}=-\alpha_{i}\theta_{i}-\sum_{i'}\delta\left(k_{i},
k_{i'}\right)\left(\tilde a_{i}^{\frak a}\theta_{i'}+\tilde b_{i'}^{\frak a}\zeta_{i}\right)+\sqrt{2\alpha_i}\sigma_ig_i^{(b)},\\
& \zeta_{i}(0):=0,\ \ \theta_i(0):=0, \ i=1,\dots,N.
\end{aligned}\end{align}
The difference of the Fr\'echet and Malliavin derivatives
$$
\Gamma(t,y) := D_{b_j}\tilde {\frak  a}^{\frak
               a}(t,y)-\mathcal{D}_{\frak h}\tilde {\frak  a}^{\frak
              a}(t,y)=
(\Upsilon_{i,j}(t),\Theta_{i,j}(t))_{i=1,\ldots,N}
$$
solves the following system of equations
\begin{align}\label{xitheta}\begin{aligned}
&\frac{d\Upsilon_{i,j}}{dt}=-\alpha_{i}\Upsilon_{i,j}+\sum_{i'}\delta\left(k_{i},
k_{i'}\right)\left(\tilde b_{i}^{\frak a}\Theta_{i',j}+
\tilde b_{i'}^{\frak a}\Theta_{i,j}\right)-\sqrt{2\alpha_i}\sigma_ig_i^{(a)},\\
&\frac{d\Theta_{i,j}}{dt}=-\alpha_{i}\Theta_{i}-\sum_{i'}\delta\left(k_{i},
k_{i'}\right)\left(\tilde a_{i}^{\frak a}\Theta_{i',j}+
\tilde b_{i'}^{\frak a}\Upsilon_{i,j}\right)-\sqrt{2\alpha_i}\sigma_ig_i^{(b)},\\
& \Upsilon_{i,j}(0):=0,\ \ \Theta_{i,j}(0):=\delta_{i,j}, \
i=1,\dots,N.
\end{aligned}\end{align}

Therefore (see \eqref{010706d}), from the chain rule for the Malliavin
derivative, see Proposition 1.2.3, p. 28 of \cite{Nualart}, we obtain
\begin{align}
\label{020707}v_j(t)=\tilde v_j(t)+\bbE\left[\mathcal{D}_g\ff(\tilde \fa^{\frak
  a}(t,y),y)\right],
\end{align}
where
$$
\tilde v_j(t):=\sum_{i}\bbE\Big[\partial_{a_i}\ff(\tilde \fa^{\frak a}(t,y),y)
\Upsilon_{i,j}(t)+\partial_{b_i}\ff(\tilde \fa^{\frak a}(t,y),y)\Theta_{i,j}(t)\Big].
$$
Integrating by parts the second term on the right hand side of
\eqref{020707} (see Lemma 1.2.1 p. 25,
of \cite{Nualart}) we
conclude that
\begin{align}\label{wzor1}\begin{aligned}
&v_j(t)=\tilde v_j(t)
+\sum_{i}\bbE\left[\ff(\tilde \fa^{\frak a}(t,y),y)\left(\int_0^tg_i^{(a)}(s)dw_{i,a}(s)+\int_0^tg_i^{(b)}dw_{i,b}\right)\right].
\end{aligned}\end{align}
We shall look for the control ${\frak g}(t,y,{\frak a})=\left(g_i^{(a)}(t,y, {\frak a}),g_i^{(b)}(t,y, {\frak a})\right)_{i=1,\ldots,N}$, which satisfies the following conditions:
\begin{itemize}
\item[i)] it is adapted with respect to the natural filtration of
  $\left(w(t)\right)_{t\ge0}$,
\item[ii)] the respective $\Gamma(t,y)\equiv0$ and ${\frak
    g}(t,y, {\frak a})\equiv0$ for $t\ge1$,
  $({\frak a} ,y)\in\bbR^{2N+2}$,
\item[iii)] we have (cf \eqref{030707x})
{\begin{equation}
\label{030707}
\|\!|{\frak g}\|\!|_{r}=\|\!|{\frak g}\|\!|_{r,1}<+\infty.
\end{equation}}
\end{itemize}

{
Then, thanks to ii), we conclude that $\tilde v_j(t)\equiv0$.}
Using formula \eqref{wzor1} and the Markov property of
$\left(\fa^{\frak a}(t,y)\right)_{t\ge0}$ we can write
\begin{align*}
&v_j(t)=\sum_i\bbE\left[P_{t-1}^y\ff(\tilde \fa^\fa(1,y))\left(\int_0^1g_i^{(a)}(s,y)dw_{i,a}(s)+\int_0^1g_i^{(b)}(s,y)dw_{i,b}(s)\right)\right],\quad
\mbox{  for  }t\ge1.
\end{align*}
Applying H\"{o}lder's inequality with $1/q+1/r=1/p$ we obtain
\begin{align*}
&\|v_j(t)\|_{L^p(\nu_*^y)}\leq\|P_{t-1}^y\ff(\cdot,y)\|_{L^q(\nu_*)}\\
&
\times \sum_{i}\left\{\left\{\int_{\bbR^{2N}}\nu_*^y(d\fa)\bbE\left|\int_0^1g_i^{(a)}(s,y,\fa)dw_{i,a}(s)\right|^r\right\}^{1/r}
+\left\{\int_{\bbR^{2N}}\nu_*^y(d\fa)\bbE\left|\int_0^1g_i^{(b)}(s,y,\fa)dw_{i,b}(s)\right|^r\right\}^{1/r}\right\}.
\end{align*}
Thanks to \eqref{050707} we can apply spectral gap estimate
\eqref{spectralgap2}. This and the Burkohlder-Davis-Gundy inequality
imply the following bound
\begin{align}
\label{010708-18}
&\|v_j(t)\|_{L^p(\nu_*^y)}\preceq 
e^{-\gamma(q)(t-1)}\|\ff(\cdot,y)\|_{L^q(\nu_*^y)}\|\!| {\frak g}\|\!|_r,\quad t\ge1,\,y\in\bbR^2
\end{align}
for  $\gamma(q)>0$ as in \eqref{pspectral}.

{When, on the other hand $t\in(0,1)$ we represent
  $v_j(t)=\partial_{b_j}P_t^y\ff(\fa)$ using the Bismut-Elworthy-Li formula,
  see e.g. formula (3.3.24), p. 75 of \cite{elworthy},
\begin{equation}
\label{bel}
\partial_{b_j}P_t^y\ff(\fa)=\frac1t\bbE\left[\ff( \tilde
\fa^{\frak a}(t,y))\int_0^t\Sigma^{-1}D_{b_j}\tilde
\fa^{\frak a}(s,y)\cdot dw(s)\right].
\end{equation}
The matrix $\Sigma$ is diagonal and given by formula
\begin{equation}
\label{Sigma}
\Sigma:=\textrm{diag}[\sqrt{2\alpha_1}\sigma_1,\dots,\sqrt{2\alpha_N}\sigma_N,\sqrt{2\alpha_1}\sigma_1,\dots,\sqrt{2\alpha_N}\sigma_N].
\end{equation}
Hence, after using the H\"older inequality and lower bounds \eqref{gamma},
we get
\begin{align}
\label{020708-18}
&\|v_j(t)\|_{L^p(\nu_*^y)}\leq \frac{1}{\si_*\gamma_0 t}
\|\ff(\cdot,y)\|_{L^q(\nu_*^y)}\\
&
\times \sum_{i}\left\{\left\{\int_{\bbR^{2N}}\nu_*^y(d\fa)\bbE\left|\int_0^t\xi_{i,j}(s)dw_{i,a}(s)\right|^r\right\}^{1/r}
+\left\{\int_{\bbR^{2N}}\nu_*^y(d\fa)\bbE\left|\int_0^t\eta_{i,j}(s)dw_{i,b}(s)\right|^r\right\}^{1/r}\right\}.\nonumber
\end{align}
Applying subsequently
Burkholder-Davis-Gundy and Jensen inequalities, we obtain 
\begin{align}
\label{030708-18}
&\|v_j(t)\|_{L^p(\nu_*^y)}\preceq \frac{1}{ t^{1/2}}
\|\ff(\cdot,y)\|_{L^q(\nu_*^y)}\|\! D_{b_j}\tilde
\fa^{\frak a}(\cdot,y)\|\!|_{r,t}
\le \frac{1}{ t^{1/2}}\|\ff(\cdot,y)\|_{L^q(\nu_*^y)} \\
&
\times \sum_{i}\left\{\left\{\frac1t\int_0^t\int_{\bbR^{2N}}\nu_*^y(d\fa)\bbE\left|\xi_{i,j}(s) \right|^r ds\right\}^{1/r}+
\left\{\frac1t\int_0^t\int_{\bbR^{2N}}\nu_*^y(d\fa)\bbE\left|\eta_{i,j}(s) \right|^r ds\right\}^{1/r}\right\}.\nonumber
\end{align}
Thanks to \eqref{050708} we conclude that
\begin{equation}
\label{070708}
\sup_{t\in(0,1),\,y\in\bbR^2}\sum_{i}\left\{\left\{\frac1t\int_0^t\int_{\bbR^{2N}}\nu_*^y(d\fa)\bbE\left|\xi_{i,j}(s) \right|^r ds\right\}^{1/r}+
\left\{\frac1t\int_0^t\int_{\bbR^{2N}}\nu_*^y(d\fa)\bbE\left|\eta_{i,j}(s)
  \right|^r ds\right\}^{1/r}\right\}<+\infty.
\end{equation}
Thus,
\begin{equation}
\label{060709a}
\|v_j(t)\|_{L^p(\nu_*^y)}\preceq \frac{1}{
  t^{1/2}}\|\ff(\cdot,y)\|_{L^q(\nu_*^y)},\quad t\in(0,1),\quad y\in\bbR^2.
\end{equation}}
From estimates \eqref{010708-18} and \eqref{060709a} we conclude
\eqref{080108}, which ends the proof of \eqref{010706}, 
 provided
we can find a control ${\frak g}$ which satisfies conditions i) - iii)
and show estimate \eqref{070708}. We shall deal with these issues in
Section \ref{sec6.4}.
The above argument can be conducted in the case of $a_j$ variables as well, so we conclude \eqref{010706}.

Using \eqref{gen} we infer that for each $p\in[1,+\infty)$
\begin{equation}
\label{010706a}
\|\mathbb L^y\Xi(\cdot,y)\|_{L^p(\nu_*^y)}\le \|{\frak f}(\cdot,y)\|_{L^p(\nu_*^y)}+\|\mathfrak{w}\cdot D\Xi(\cdot,y)\|_{L^p(\nu_*^y)}.
\end{equation}
From the definition of the operator $D$, see \eqref{obrot}, and
H\"older inequality, applied to the second term on the right hand side
of \eqref{010706a}, we obtain
\begin{equation}
\label{010706b}
\|\mathfrak{w}\cdot D\Xi(\cdot,y)\|_{L^p(\nu_*^y)}\le \|\Phi\|_{L^{r'}(\nu_*^y)}\|\nabla_{\fa}\Xi(\cdot,y)\|_{L^{q'}(\nu_*^y)},
\end{equation}
where $\Phi$ is some second degree polynomial in $\fa$ with constant coefficients. We have assumed that 
$q'\in(p,q)$ and $r'$ are such that $1/q'+1/r'=1/p$. Using the
already proved estimate  \eqref{010706} to bound the norm of the
gradient on the right hand side of \eqref{010706b}, we conclude that
$$
\|\mathfrak{w}\cdot D\Xi(\cdot,y)\|_{L^p(\nu_*^y)}\preceq
\|\ff(\cdot,y)\|_{L^{q}(\nu_*^y)},\quad y\in\bbR^2.
$$
Thus,
\begin{equation}
\label{010706c}
\|\mathbb L^y\Xi(\cdot,y)\|_{L^p(\nu_*^y)}\preceq \|\ff(\cdot,y)\|_{L^q(\nu_*^y)},\quad y\in\bbR^2.
\end{equation}
Since,  see Theorem 1.5.1 of \cite{Nualart}, p. 72, for each $p\in(1,+\infty)$ we have 
$$
\|\nabla^2_{\frak a}\Xi(\cdot,y)\|_{L^p(\nu_*^y)}\preceq \|\mathbb L^y\Xi(\cdot,y)\|_{L^p(\nu_*^y)},\quad y\in\bbR^2.
$$
This estimate allows us to conclude the proof of Theorem \ref{TwProf}.\qed

\subsection{Construction of a control ${\frak g}$ and proof of (\ref{070708})}

\label{sec6.4}

Denote by $C(t,s)=[C_{i,i'}(t,s)]_{i,i'=1,\ldots,2N}$ the fundamental matrix of the system
\eqref{070707}.
It is a $2N\times2N$-matrix, which is the solution of the equation
\begin{equation}\label{eq16}
\frac{d}{dt}C(t,s)=A(t)C(t,s),\quad C(s,s)=I_{2N},\quad t,s\ge0,
\end{equation}
where $I_{2N}$ is the  identity $2N\times2N$-matrix and $A(t)=[A_{i,i'}(t)]_{i,i'=1,\ldots,2N}$, where
\begin{align}
\label{090707}
&A_{i,i'}(t):=-\alpha_i\delta_{i,i'},\qquad
  A_{i+N,i'+N}(t):=-\left[\alpha_i\delta_{i,i'}+\delta\left(k_i,k_{i'}\right) \tilde a_{i}^{\frak a}(t,y)\right],\\
&A_{i,i'+N}(t):=\delta\left(k_i,k_{i'}\right) \left(\tilde b_{i}^{\frak a}(t,y)+\delta_{i,i'}\sum_{i''}\delta\left(k_i,k_{i''}\right)\tilde b_{i''}^\fa(t,y)\right),\nonumber\\
&A_{i+N,i'}(t):=\delta_{i,i'}\sum_{i''}\delta\left(k_i,k_{i''}\right)\tilde b_{i''}^\fa(t,y),\quad 1\leq i,i'\leq N.\nonumber
\end{align}
We  have 
$$
C(u,t)C(t,s)=C(u,s),\quad u,t,s\in\mbR.
$$
System \eqref{xitheta} can be rewritten as follows
\begin{equation}\label{eq15}
\frac{d\Phi}{dt}=A(t)\Phi-\Sigma{\frak g}(t,y),\quad \Phi(0)=E,
\end{equation}
where $\Phi$ is the $2N\times N$-dimensional matrix such that
$$
\Phi_{i,j}:=\Upsilon_{i,j},\quad 1\le i,j\le N,\quad
\Phi_{i,j}:=\Theta_{i,j},\quad N+1\leq i\le 2N,\ 1\le j\le N,
$$
and $E$ is a block vector, such that
$
E^T:=\left[ 0_N,
I_{N}\right],
$ where $0_N$, $I_N$ are  the $N\times N$ null matrix and identity
matrix, respectively.
Here the diagonal matrix $\Sigma$ is defined in \eqref{Sigma}
and
${\frak
  g}:=[g_1^{(a)},\ldots,g_N^{(a)},g_1^{(b)},\ldots,g_N^{(b)}]^T.$
The solution of equation \eqref{eq15} can be expressed by the
fundamental solution as follows
\begin{equation}
\label{080707}
\Phi(t)=-\int_0^tC(t,s)\Sigma{\frak g}(s)ds+C(t,0)E,\quad t\ge0,
\end{equation}
which in turn implies that $\Gamma(t,y)\equiv 0$, so condition ii) is satisfied.

Let
\begin{equation}
\label{100707}
{\frak g}(t,y,{\frak a}):=\Sigma^{-1}C(t,0)E, \quad t\in[0,1],
\end{equation}
and ${\frak g}(t,y,{\frak a})\equiv 0$ for $t\ge 1$. The process  is adapted with
respect to the  natural filtration of $\left(w_t\right)_{t\ge0}$,
satisfying therefore condition 
i). Additionally, we have
$$
-\int_0^1C(1,s)\Sigma{\frak g}(s,y,\fa)ds+C(1,0)E=-\int_0^1 C(1,0) Eds+C(1,0)E=0.
$$
Thus,  $\Phi(t)\equiv0$ for $t\ge1$, which in turn implies that
$\Gamma(t,y)\equiv0$, $t\ge1$. Condition ii) is therefore fulfilled.

It remains to be checked that $\left({\frak g}(t,y,\fa)\right)_{t\ge0}$,
constructed above,
satisfies the estimate \eqref{030707}.
From \eqref{eq16} we conclude that
$$
\|C(t,0)\|\leq 1+\int_0^t{\frak A}(s)\|C(s,0)\|ds ,\quad
t\geq0,
$$
where 
$$
\|C(t,s)\|:=\max_{i,i'}|C_{i,i'}(t,s)|\quad\mbox{and}\quad {\frak
  A}(t):=\sum_{i,i'}|A_{i,i'}(t)|
$$
Using first Gronwall and then  the Jensen  inequality for the
integral in $ds$, we obtain
\begin{equation}
\label{080708}
{\sup_{t\in[0,1]}\|C(t,0)\|}\leq\exp\left\{\int_0^1{\frak A}(s)ds\right\}\le \int_0^1\exp\{{\frak A}(s)\}ds.
\end{equation}
It is clear from \eqref{090707} that
\begin{equation}
\label{110707}
{\frak A}(t)\preceq 1+\sum_{i}(|a_i^{\frak a}(t,y)|+|b_i^{\frak a}(t,y)|),
\end{equation}
where $\left( a_i^{\frak a}(t,y),b_i^{\frak a}(t,y)\right)_{t\ge0}$ is
the solution of \eqref{Ornstein}, with  $\left( a_i^{\frak
    a}(0,y),b_i^{\frak a}(0,y)\right)_{i=1,\ldots,N}={\frak a}$. 

Recall that $\min_i\{\sigma_i\}\ge \sigma_*>0$ and
$\min_i\{\alpha_i\}\ge \gamma_0>0$. Therefore, from \eqref{100707},
\eqref{110707} and gaussianity of  $\left( a_i^{\frak
    a}(t,y),b_i^{\frak a}(t,y)\right)_{t\ge0}$  we obtain that for
each $r\ge1$ we have
\begin{align*}
&\|\!|{\frak
  g}\|\!|_r\preceq\sup_{y\in\bbR^2}\left\{\int_0^1ds\int_{\bbR^{2N}}\nu_*^y(d\fa)\bbE\exp\{r{\frak
  A}(s)\}\right\}^{1/r}<+\infty.
\end{align*}

{Since
$
D_{b_j}\tilde
\fa^{\frak a}(t,y)=C(t,0)E_j,
$
where $E_j$ is the $j$-th column vector of the matrix $E$,
from \eqref{080708} we conclude that
\begin{equation}
\label{050708}
 \sup_{t\in[0,1],\,y\in\bbR^2}\int_{\bbR^{2N}}\nu_*^y(d\fa)\bbE\|D_{b_j}\tilde
\fa^{\frak a}(t,y)\|^r\preceq \sup_{y\in\bbR^2}\int_0^1ds\int_{\bbR^{2N}}\nu_*^y(d\fa)\bbE\exp\{r{\frak
  A}(s)\}<+\infty.
\end{equation}}

\subsection{Regularity of the corrector in the $y$-variable}


We start with the following simple lemma.
\begin{lem}\label{lemP} For any $1\le p<q<+\infty$ there exist $C,r>0$ such that
  \begin{equation}
\label{020709}
\sup_{|y-y_0|\le r}\|g\|_{L^p(\nu_{*}^y) }\le C
\|g\|_{L^q(\nu_{*}^{y_0})},\quad y_0\in\bbR^2. 
\end{equation}
\end{lem}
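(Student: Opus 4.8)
The plan is to reduce \eqref{020709} to a uniform bound on a moment of the Radon--Nikodym density $\rho_y:=d\nu_*^y/d\nu_*^{y_0}$. First I would write, for any measurable $g$,
\[
\int_{\bbR^{2N}}|g|^p\,d\nu_*^y=\int_{\bbR^{2N}}|g|^p\rho_y\,d\nu_*^{y_0},
\]
and apply H\"older's inequality with the conjugate exponents $q/p$ and $s:=q/(q-p)$ (both $>1$ since $p<q$), which yields
\[
\|g\|_{L^p(\nu_*^y)}^p\le \|g\|_{L^q(\nu_*^{y_0})}^p\left(\int_{\bbR^{2N}}\rho_y^{s}\,d\nu_*^{y_0}\right)^{1/s}.
\]
It then suffices to produce $r>0$, depending only on $p,q$ and the constants in \eqref{gamma}, and a constant $M$ such that $\int_{\bbR^{2N}}\rho_y^{s}\,d\nu_*^{y_0}\le M$ for every $y_0\in\bbR^2$ and every $y$ with $|y-y_0|\le r$.

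Since $\nu_*^y$ is the Gaussian product measure \eqref{miara}, the density $\rho_y$ factorizes over the $2N$ coordinates, each factor being of the form $\Phi_{\sigma_i(y)}/\Phi_{\sigma_i(y_0)}$, so $\int\rho_y^{s}\,d\nu_*^{y_0}$ is a product of $2N$ one-dimensional Gaussian integrals. A direct computation gives
\[
\int_{\bbR}\left(\frac{\Phi_{\sigma}(a)}{\Phi_{\sigma_0}(a)}\right)^{s}\Phi_{\sigma_0}(a)\,da=\left(\frac{\sigma_0}{\sigma}\right)^{s}\frac1{\sigma_0\sqrt{2\pi}}\int_{\bbR}\exp\left\{-\frac{a^2}{2}\left(\frac{s}{\sigma^2}+\frac{1-s}{\sigma_0^2}\right)\right\}da,
\]
which is finite exactly when $\sigma^2<(s/(s-1))\sigma_0^2=(q/p)\sigma_0^2$, and in that case is bounded by a constant that depends only on $\sigma_*$, provided $\sigma^2/\sigma_0^2$ stays bounded away from $q/p$. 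Because each $\sigma_i\in C^2_b(\bbR^2)$ with $\sigma_i\ge\sigma_*>0$, the ratios $\sigma_i(y)^2/\sigma_i(y_0)^2$ tend to $1$ as $|y-y_0|\to0$ uniformly in $y_0$ and in the finitely many indices $i$; hence I would fix some $\kappa\in(1,q/p)$ and then choose $r>0$ (depending only on $p/q$, $\sigma_*$ and $\max_i\|\nabla\sigma_i\|_\infty$) so small that $\sigma_i(y)^2\le\kappa\,\sigma_i(y_0)^2$ for all $i$ and all $|y-y_0|\le r$. Each of the $2N$ factors is then bounded by a constant $M_0$ independent of $y_0$ and $y$, so $\int\rho_y^{s}\,d\nu_*^{y_0}\le M_0^{2N}=:M$, and \eqref{020709} follows with $C:=M^{1/(ps)}$.

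I do not expect a genuine obstacle here. The only point that has to be treated carefully is that the radius $r$ must be chosen uniformly in the base point $y_0$; this is precisely what the two-sided bounds $\sigma_*\le\sigma_i\le 1/\sigma_*$ together with the boundedness of $\nabla\sigma_i$ provide, since they make $y\mapsto\sigma_i(y)$ uniformly continuous on $\bbR^2$ and bounded away from $0$.
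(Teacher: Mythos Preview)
Your argument is correct and follows essentially the same route as the paper: write $d\nu_*^y=\rho_y\,d\nu_*^{y_0}$, apply H\"older's inequality, and use the explicit Gaussian product form of $\rho_y$ together with uniform continuity of $\sigma_i$ and the lower bound $\sigma_i\ge\sigma_*$ to control the density factor uniformly in $y_0$. The only cosmetic difference is that the paper first writes $\rho=(\rho-1)+1$ and bounds $|\rho-1|$ via $|e^x-1|\le e^{2|x|}$ before applying H\"older, whereas you apply H\"older directly to $\rho_y^s$ and compute the resulting one-dimensional Gaussian integrals exactly; your version is slightly more streamlined but the substance is the same.
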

\begin{proof}
Observe that
\begin{equation}\label{Bam}
\|g\|^p_{L^p(\nu_{*}^y)}=\int_{\mbR^{2N}}|g|^p\rho d\nu_{*}^{y_0}=\int_{\mbR^{2N}}|g|^p(\rho -1)d\nu_{*}^{y_0}+\int_{\mbR^{2N}}|g|^pd\nu_{*}^{y_0},
\end{equation}
where
$$
\rho(y):=\prod_{i=1}^N\left\{\dfrac{\sigma_i^2(y_0)}{\sigma_i^2(y)}\exp\left\{-\dfrac{(a_i^2+b_i^2)(\sigma_i^2(y)-\sigma_i^2(y_0))}{2\sigma_i^2(y_0)\sigma_i^2(y)}\right\}\right\},\quad
y\in\bbR^2.
$$
Using an elementary inequality
$
|e^x-1|\leq  e^{2|x|}
$
we obtain that
\begin{align}
&\int|g|^p|\rho-1|d\nu_{*}^{y_0}\leq \int_{\mbR^{2N}}|g|^p
\exp\left\{\sum_{i=1}^N\dfrac{(a_i^2+b_i^2)|\sigma_i^2(y)-\sigma_i^2(y_0)|}{\sigma_i^2(y_0)\sigma_i^2(y)}\right\}d\nu_{*}^{y_0}\label{dalszyciag}.
\end{align}
From uniform continuity of the function $\sigma_i^2$ (because
$\sigma_i\in C^2_b(\mbR^d)$) for any $\mu>0$ there exists $\delta>0$
such that $|\sigma_i^2(y)-\sigma_i^2(y_0)|<\mu$, provided that
$|y-y_0|<\delta$. From the H\"older inequality with $1/q+1/q'=1/p$ and the
lower bound \eqref{gamma} on $\si$,  we obtain that the expression
 \eqref{dalszyciag} is less than or equal
 \begin{align*}
&\|g\|_{L^q(\nu_{*}^{y_0})} \Bigg(\int_{\mbR^{2N}}\exp\left\{\frac{q'\mu}{\sigma_*^{4q'}}
\sum_{i=1}^N\left(a_i^2+b_i^2\right)\right\}
d\nu_*^{y_0}\Bigg)^{1/2}.
\end{align*}
One can choose $q'\mu$  sufficiently small so that the
second factor  is finite, which ends the proof of the lemma.
\end{proof}

\bigskip


The main result of this section is the following.
\begin{twr}\label{lab}
Suppose that $\Xi:\bbR^{2N+2}\to\mathbb C$ is the solution of
\eqref{PT-91a}. Then, under the assumptions made in Section
$\ref{CP}$ for any $p\in[1,+\infty)$
there exists $r>0$ such that 
for any $y_0\in\bbR^2$ 
we have $\Xi(\cdot,y)\in W^{2,p}(\nu_*^{y_0})$, provided that
$|y-y_0|<r$ and
\begin{equation}
\label{040709}
\lim_{y\to y_0}\|\Xi(\cdot,y)-\Xi(\cdot,y_0)\|_{ W^{2,p}(\nu_*^{y_0})}=0.
\end{equation}
For each $p\in[1,+\infty)$ the derivatives $\nabla_y^{m}\Xi(\cdot,y)$, $m=1,2$
exist in the $W^{p,2}(\nu_{*}^y)$-sense for each  $y\in\bbR^2$. In
addition, 
 \begin{equation}
 \label{extra-3}
 \sum_{m=0}^2\sup_{y\in\mbR^2}\|\nabla_y^{m}\Xi(\cdot,y)\|_{W^{p,2}(\nu_{*}^y)}<+\infty.
\end{equation}
\end{twr}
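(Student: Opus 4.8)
The plan is to reduce the $y$-regularity of $\Xi$ entirely to the uniform-in-$y$ a priori bound of Corollary \ref{cor010709}, by differentiating the corrector equation \eqref{PT-91} in $y$ and bootstrapping. The basic observation is that the only $y$-dependence of $\mathcal L_y$ lies in its Ornstein--Uhlenbeck part $\mathbb L^y$ (see \eqref{gen}, \eqref{genOrnstein}, \eqref{generatorL}), since $\mathfrak w$ and $D$ do not depend on $y$; consequently, for $y,y'\in\bbR^2$ the difference $\mathcal L_y-\mathcal L_{y'}=\mathbb L^y-\mathbb L^{y'}$ is a second order differential operator in $\fa$ whose coefficients are the corresponding increments of $\alpha_i(y)\sigma_i^2(y)$ and $\alpha_i(y)$, functions which are $C^2_b(\bbR^2)$ and bounded together with their derivatives by \eqref{gamma}. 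The same holds, in the second case of Section \ref{CP}, for the increments of $c(\fa;\cdot)=-\alpha_i(\cdot)+\textrm{i}\,q(\fa;\cdot)$, which are second degree polynomials in $\fa$ with $C^1_b$ coefficients and are therefore controlled in every $L^p(\nu_*^y)$, uniformly in $y$, by the uniform Gaussian moment bounds following from \eqref{gamma}.

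\textbf{Step 1: continuity \eqref{040709}.} Fix $y_0$ and put $\Delta_y:=\Xi(\cdot;y)-\Xi(\cdot;y_0)$. Subtracting \eqref{PT-91} at $y$ and at $y_0$,
\begin{equation}\label{plan-eq-1}
\mathcal L_{y_0}\Delta_y+c(\cdot;y_0)\Delta_y=\big[f(\cdot;y)-f(\cdot;y_0)\big]-\big(\mathbb L^y-\mathbb L^{y_0}\big)\Xi(\cdot;y)-\big(c(\cdot;y)-c(\cdot;y_0)\big)\Xi(\cdot;y).
\end{equation}
By \eqref{010709}, \eqref{azl2} and Corollary \ref{cor010709} the norm $\|\Xi(\cdot;y)\|_{W^{2,q}(\nu_*^y)}$ is bounded uniformly in $y$; using this together with Lemma \ref{lemP} to pass between $\nu_*^y$ and $\nu_*^{y_0}$ for $|y-y_0|$ small, the $C^1_b$ moduli of continuity of the coefficients, and the $L^q$-continuity of $y\mapsto f(\cdot;y)$ built into \eqref{azl2}, one checks that the right hand side of \eqref{plan-eq-1} tends to $0$ in $L^q(\nu_*^{y_0})$ as $y\to y_0$, for every $q$. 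In the first case of Section \ref{CP} one first subtracts from $\Delta_y$ the constant $\int\Delta_y\,d\nu_*^{y_0}=\int\Xi(\cdot;y)\,(d\nu_*^{y_0}-d\nu_*^{y})$, which is $o(1)$ as $y\to y_0$ by the density estimate used in the proof of Lemma \ref{lemP}, so as to apply Corollary \ref{cor010709} to a $\nu_*^{y_0}$-mean zero function; in the second case no such correction is needed. Either way \eqref{040709} follows.

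\textbf{Step 2: the $y$-derivatives.} For $|h|\le1$ and $n=1,2$ let $\delta_h^n\Xi:=h^{-1}\big(\Xi(\cdot;y+he_n)-\Xi(\cdot;y)\big)$. As in \eqref{plan-eq-1}, $\delta_h^n\Xi$ satisfies
\begin{equation}\label{plan-eq-2}
\mathcal L_y(\delta_h^n\Xi)+c(\cdot;y)(\delta_h^n\Xi)=\delta_h^nf-(\delta_h^n\mathbb L)\Xi(\cdot;y+he_n)-(\delta_h^nc)\Xi(\cdot;y+he_n)=:\ff_h,
\end{equation}
$\delta_h^n$ denoting the evident difference quotient in $y_n$ of the coefficient involved. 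Using \eqref{azl2}, \eqref{010709}, Corollary \ref{cor010709}, Lemma \ref{lemP} and boundedness of the difference quotients of the $C^2_b$ coefficients one gets $\sup_{0<|h|\le1}\sup_y\|\ff_h\|_{L^q(\nu_*^y)}<+\infty$ for all $q$, whence, by Corollary \ref{cor010709} (in case 1 after subtracting from $\delta_h^n\Xi$ its $\nu_*^y$-mean, which is uniformly bounded by the same density estimate),
\begin{equation}\label{plan-eq-3}
\sup_{0<|h|\le1}\ \sup_{y\in\bbR^2}\|\delta_h^n\Xi\|_{W^{2,p}(\nu_*^y)}<+\infty,\qquad p\in[1,+\infty).
\end{equation}
Taking the difference of \eqref{plan-eq-2} for two step sizes $h,h'$, the right hand side tends to $0$ in $L^q(\nu_*^y)$ as $h,h'\to0$, by Step 1 and the uniform convergence of the difference quotients of the smooth coefficients; hence by Corollary \ref{cor010709} the family $(\delta_h^n\Xi)$ is Cauchy in $W^{2,p}(\nu_*^y)$, and its limit is the derivative $\partial_{y_n}\Xi(\cdot;y)\in W^{2,p}(\nu_*^y)$. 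Passing to the limit in \eqref{plan-eq-2} shows that $\partial_{y_n}\Xi$ solves an equation of the same type as \eqref{PT-91}, with right hand side $\partial_{y_n}f-(\partial_{y_n}\mathbb L^y)\Xi-(\partial_{y_n}c)\Xi$, which still obeys a bound of the form \eqref{azl2} with one fewer $y$-derivative on $f$, the remaining terms being controlled by Corollary \ref{cor010709}. Repeating the argument yields the second derivatives $\partial^2_{y_my_n}\Xi(\cdot;y)$ in the $W^{2,p}(\nu_*^y)$-sense, and \eqref{extra-3} follows by applying Corollary \ref{cor010709} to the equations solved by $\Xi$, $\partial_{y_n}\Xi$ and $\partial^2_{y_my_n}\Xi$, whose data are bounded in $L^q(\nu_*^y)$ uniformly in $y$ by the estimates obtained along the way.

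\textbf{Main obstacle.} The crux is the uniform bookkeeping in Step 2: every term produced when differentiating \eqref{PT-91} in $y$ must land in $L^q(\nu_*^y)$ with a bound independent of $y$ (and of the step size $h$), and this rests decisively on the fact that Corollary \ref{cor010709} estimates the \emph{full} $W^{2,q}$-norm of $\Xi(\cdot;y)$ by $\|f(\cdot;y)\|_{L^{q'}(\nu_*^y)}$ with a $y$-independent constant; without this uniformity the bootstrap collapses. Two auxiliary points need care: the transfer between the measures $\nu_*^y$ for nearby $y$ (handled by Lemma \ref{lemP}) and the $y$-dependent zero-mean normalization in the case $c\equiv0$ (handled by the explicit density estimate in the proof of Lemma \ref{lemP}). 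Finally one must watch the derivative budget: each differentiation in $y$ costs one $y$-derivative of $f$, so the two derivatives assumed in \eqref{azl2} are exactly enough to reach $m=2$.
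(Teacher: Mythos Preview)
Your proposal is correct and follows essentially the same approach as the paper: subtract the equations at $y$ and $y_0$ to get an equation of type \eqref{PT-91} for the increment (your \eqref{plan-eq-1} is the paper's \eqref{ilor}), bound the right hand side in $L^q(\nu_*^{y_0})$ via Corollary \ref{cor010709} and Lemma \ref{lemP}, then apply \eqref{Proff}; for the derivatives, do the same with difference quotients and bootstrap. The only cosmetic difference is that the paper defines the candidate $\partial_y\Xi$ as the solution of the formally differentiated equation and shows the remainder $R_h\Xi={\cal D}_h\Xi-\partial_y\Xi$ vanishes, whereas you show the difference quotients are Cauchy; also, you make explicit the zero-mean correction needed in case $c\equiv0$, which the paper leaves implicit.
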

\begin{proof}
To simplify the notation we shall assume that parameter
$y\in\mbR$. Given a function $f:\bbR^{2N+2}\to\mathbb C$  we let
$\delta f(\cdot,y_0):=
f(\cdot,y)-f(\cdot,y_0)$. From \eqref{PT-91} we can write 
\begin{equation}\label{ilor}
\mathcal{L}_{y_0}\delta\Xi(\fa;y)+c(\fa,y_0)
\delta\Xi(\fa;y)=-\delta\mathcal{L}_{y_0}\Xi(\fa;y)-\delta c(\fa,y_0) \Xi(\fa;y)+\delta f(\fa,y_0) ,
\end{equation}
where $\delta\mathcal{L}_{y_0}$ is the differential operator obtained
from $\mathcal{L}_{y_0}$ by taking the corresponding differences of
the coefficients.
Using Lemma \ref{lemP} and Corollary \ref{cor010709} we conclude that
for each $q\in[1,+\infty)$ 
the $L^q(\nu_*^{y_0})$-norm of the right hand side tends to $0$, as
$y\to y_0$.  Equality \eqref{040709} is then a consequence of
\eqref{Proff}.





To prove the existence of the derivative $\partial_y \Xi(\fa;y)$ denote
by 
$${\cal D}_h f(\cdot,y_0):=
\frac1h[f(\cdot,y_0+h)-f(\cdot,y_0)]
$$
for a given  function $f:\bbR^{2N+2}\to\mathbb C$ and  $h\not=0$.
We show that
\begin{equation}
\label{060709}
\lim_{h\to0}\|R_h\Xi(\cdot,y_0) \|_{ W^{2,p}(\nu_*^{y_0})}=0,
\end{equation}  
where 
$$
R_h\Xi(\cdot,y_0) :={\cal D}_h \Xi(\cdot,y_0)-\partial_y\Xi(\cdot,y_0)
$$
and
$\nabla_y\Xi(\cdot,y_0) $ is the solution of 
$$
\mathcal{L}_{y_0}\partial_y\Xi(\fa;y_0)+c(\fa,y_0)
 \partial_y\Xi(\fa;y_0)=-\mathcal{L}_{y_0}'\Xi(\fa;y_0)- \partial_y c(\fa,y_0) \Xi(\fa;y_0)+\partial_y f(\fa,y_0) .
$$
Here $ \mathcal{L}_{y_0}'$ is the differential operator obtained from
$ \mathcal{L}_{y_0}$ by differentiating in $y$ its coefficients.
We have
\begin{align*}
&\mathcal{L}_{y_0} R_h\Xi(\cdot,y_0) +c(\fa,y_0)
 R_h\Xi(\cdot,y_0) =\mathcal{L}_{y_0}'\Xi(\fa;y_0)- {\cal D}_h
  \mathcal{L}_{y_0}\Xi(\fa;y_0+h) \\
&
+\partial_y c(\fa,y_0) \Xi(\fa;y_0)- {\cal D}_hc(\fa,y_0) \Xi(\fa;y_0+h) +\partial_y f(\fa,y_0) - {\cal D}_hf(\fa,y_0).
\end{align*}
Here $ {\cal D}_h\mathcal{L}_{y_0}$ is the differential operator obtained from
$ \mathcal{L}_{y_0}$ by taking the respective quotients of its coefficients.
Using again estimate \eqref{Proff} we conclude that
 $$
 \lim_{h\to0}\|R_h\Xi(\cdot,y_0)\|_{ W^{2,p}(\nu_*^{y_0})}=0\quad\mbox{ for any }y_0\in\bbR.
 $$
 The proof of the existence of the second derivative is analogous.
 
{The fact that $\sup_y \|\Xi(\cdot,y)\|_{W^{p,2}(\nu_*^y)}<+\infty$ for any $p\in[1,+\infty)$ follows directly by an application of Corollary \ref{cor010709}. One can see from \eqref{ilor} that  $\partial_y\Xi(\cdot,y)$  satisfies  equation of the form \eqref{PT-91} with the right hand side that 
belongs to  $L^p(\nu_*^y)$ for any $p\in[1,+\infty)$. Another application of Corollary \ref{cor010709} yields that  $\sup_y \|\partial_y\Xi(\cdot,y)\|_{W^{p,2}(\nu_*^y)}<+\infty$. A similar argument can be also made for  $\partial_y^2\Xi(\cdot,y)$ and estimate  \eqref{extra-3} follows.}
 \end{proof}

%
%

\subsection{$L^\infty$ estimates of the corrector}\label{lnies}

Given a differentiable function
$f:\bbR^{2N}\to\mathbb C$ and $R>0$ we define the norm
$$
\|f\|_{1,\infty}^{(R)}:=\sup_{|\fa|\le R}\left(|f(\fa)|+|\nabla_\fa f(\fa)|\right).
$$
The main purpose of this section is the proof of the following result.
\begin{Propozycja}\label{Prop}
Under the assumptions made in Section \ref{CP} for any $C_*>0$ there exists $C>0$ such that \begin{equation}\label{biala}
\sup_{y\in\mbR^2}\sum_{m=0}^2\|\nabla_y^m\Xi(\cdot;y)\|_{1,\infty}^{(R)}\leq
Ce^{C_*R^2}\quad\mbox{for all }R>0.
\end{equation}
\end{Propozycja}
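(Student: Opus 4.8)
The plan is to exploit the Feynman--Kac representation \eqref{Feynamnkac1} together with the a priori Sobolev estimates already established in Corollary \ref{cor010709} and Theorem \ref{lab}, and then to upgrade the $L^p(\nu_*^y)$ control to a pointwise (on balls) control by a Sobolev embedding argument that tracks the $R$-dependence. First I would fix $y\in\bbR^2$ and $R>0$. From the Feynman--Kac formula \eqref{Feynamnkac1}, using the lower bound $\alpha_i(y)\ge\gamma_0$ from \eqref{gamma} in the case $c(\fa,y)=-\alpha_i(y)+\mathrm{i}q(\fa;y)$ (and the spectral gap \eqref{spectralgap2} in the case $c\equiv0$), one gets for each fixed starting point $\fa$ with $|\fa|\le R$
\begin{equation}
\label{plan-FK}
|\Xi(\fa;y)|\le \int_0^{+\infty}e^{-\gamma_0 t}\,\bbE\big[|\ff(\tilde\fa^{\fa}(t;y);y)|\big]\,dt,
\end{equation}
where I write $\ff$ for the right hand side of \eqref{PT-91}, which by \eqref{azl2} and \eqref{010709} has all $L^q(\nu_*^y)$-norms uniformly bounded in $y$. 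The point is to estimate $\bbE[|\ff(\tilde\fa^{\fa}(t;y);y)|]$ when the initial datum $\fa$ is not averaged against $\nu_*^y$ but fixed in a ball of radius $R$. Since $\ff(\cdot;y)$ has at most polynomial growth and the flow $\tilde\fa^{\fa}(t;y)$ of \eqref{dyf2} has moments controlled through the Gronwall-type bound on the fundamental matrix already used in Section \ref{sec6.4} (cf.\ \eqref{080708}, \eqref{110707}), one obtains $\bbE[|\ff(\tilde\fa^{\fa}(t;y);y)|]\preceq e^{C_1(1+|\fa|^2)}\preceq e^{C_1(1+R^2)}$ uniformly in $t$ and $y$; plugging this into \eqref{plan-FK} and integrating the exponential factor in $t$ gives $\sup_{|\fa|\le R}|\Xi(\fa;y)|\preceq e^{C_1(1+R^2)}$. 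The constant $C_1$ here can be made arbitrarily small relative to the prescribed $C_*$ at the price of enlarging the multiplicative constant $C$, by the usual trick of splitting $e^{C_1|\fa|^2}$ for $|\fa|\le R$ against a Gaussian tail; this is where the freedom in the statement (``for any $C_*>0$'') is used.

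Next I would treat the $\fa$-gradient $\nabla_\fa\Xi$ and the $y$-derivatives $\nabla_y^m\Xi$, $m=1,2$. For $\nabla_\fa\Xi$ one differentiates the Feynman--Kac representation in $\fa$, picking up the Fr\'echet derivative $D_\fa\tilde\fa^{\fa}(t;y)=C(t,0)$ of the stochastic flow, whose pointwise-in-$\fa$ moments are again controlled by $e^{C_1R^2}$ through \eqref{080708}; alternatively, and more cleanly, one applies the interior elliptic a priori estimate Theorem \ref{apriori} to the equation \eqref{PT-91}, viewed on a ball $B_{2R}\subset\bbR^{2N}$, noting that the coefficients of $\mathcal L_y$ and of $c$ are polynomials of bounded degree in $\fa$ (with $y$-uniformly bounded coefficients), so the $R^{2k}$-type constant in \eqref{twier} is of polynomial order in $R$ and hence absorbed in $e^{C_*R^2}$. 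This reduces the $\|\cdot\|_{1,\infty}^{(R)}$ bound on a ball of radius $R$ to an $L^p(B_{2R})$-bound on $\Xi$, which by comparing Lebesgue and Gaussian measures on $B_{2R}$ (the Gaussian density $\nu_*^y$ is bounded below by $c\,e^{-c'R^2}$ on $B_{2R}$, with constants uniform in $y$ by \eqref{gamma}) is in turn controlled by $e^{c'R^2}\|\Xi(\cdot;y)\|_{L^p(\nu_*^y)}\preceq e^{c'R^2}$, using Corollary \ref{cor010709}. For the $y$-derivatives, Theorem \ref{lab} guarantees that $\partial_y\Xi$ and $\partial_y^2\Xi$ exist in the $W^{p,2}(\nu_*^y)$-sense and solve equations of the same structural type \eqref{PT-91} (with right hand sides built from $\partial_y$- and $\partial_y^2$-derivatives of the data and lower-order terms, all still of the admissible form), with $L^p(\nu_*^y)$-norms bounded uniformly in $y$ by \eqref{extra-3}; so the same two-step argument --- elliptic regularity on $B_{2R}$ via Theorem \ref{apriori}, then Gaussian-to-Lebesgue comparison on $B_{2R}$ --- applies verbatim to each $\nabla_y^m\Xi$.

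Finally I would assemble the three estimates, choosing the small parameter in the Gaussian-splitting step so that the total exponential rate is exactly $C_*$, and take the supremum over $y\in\bbR^2$, which is legitimate because every intermediate constant has been kept $y$-independent (this uses \eqref{gamma}, the uniform bounds \eqref{azl2}, \eqref{010709}, \eqref{extra-3}, and the $y$-uniformity in Theorem \ref{apriori} and Corollary \ref{cor010709}). The main obstacle I expect is the bookkeeping of the $R$-dependence: one must verify that the elliptic a priori constant in Theorem \ref{apriori} really grows only polynomially in $R$ for the operator $\mathcal L_y+c$ (whose coefficients have $\fa$-degree at most $2$, matching the hypothesis on $b_l,c$ there), and that the Gaussian-density lower bound on $B_{2R}$ contributes only an $e^{O(R^2)}$ factor with an $O(1)$ exponential constant that can be made subordinate to $C_*$ by the rescaling trick; once these two points are checked, the rest is routine.
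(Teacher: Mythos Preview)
Your proposal assembles the right ingredients but misses the single mechanism that delivers the \emph{arbitrary} exponent $C_*$. In the Feynman--Kac branch, once you obtain $|\Xi(\fa;y)|\preceq e^{C_1(1+|\fa|^2)}$ with $C_1$ coming from the Gronwall bound on the flow (via \eqref{080708}--\eqref{110707}), that $C_1$ is fixed by the coefficients of the SDE and cannot be lowered by any ``splitting against a Gaussian tail'': if $C_1>C_*$ you are simply stuck. Moreover, in the case $c\equiv 0$ the spectral gap \eqref{pspectral} gives decay of $P_t^y\ff$ only in $L^p(\nu_*^y)$, not pointwise in $\fa$, so your displayed inequality for $|\Xi(\fa;y)|$ with the factor $e^{-\gamma_0 t}$ is not available there without additional work.

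The paper's argument is shorter and pinpoints the lever you are missing. It never revisits the Feynman--Kac representation or an elliptic interior estimate on balls; it starts directly from the already secured uniform bound $\sup_y\|\nabla_y^m\Xi(\cdot;y)\|_{W^{2,p}(\nu_*^y)}<\infty$ of Corollary~\ref{cor010709} and Theorem~\ref{lab}. The Gaussian-to-Lebesgue comparison on $B_R$ then reads $\|F\|_{W^{2,p}(B_R)}\preceq e^{R^2/(p\sigma_*^2)}\|F\|_{W^{2,p}(\nu_*^y)}$ --- note the $1/p$ in the exponent, which appears after taking the $p$-th root and which you wrote as a $p$-independent constant $c'$ --- and the Sobolev embedding $W^{2,p}(B_R)\hookrightarrow C^1(\overline{B_R})$ contributes only a polynomial factor in $R$. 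Since $p$ is a free parameter, choosing $p>2/(C_*\sigma_*^2)$ forces the exponential rate below $C_*$, after which the polynomial prefactor is absorbed into $e^{C_*R^2}$ for large $R$. That freedom in $p$ is the ``rescaling trick'' you were gesturing at; without naming it explicitly, neither branch of your argument closes.
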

\begin{proof}
Let $C_*>0$ be arbitrary. We choose  $p\in[1,+\infty)$ to be specified further later on. Note that  (cf \eqref{gamma})
\begin{equation}\label{jazda}
\|\Xi(\cdot;y)\|_{W^{2,p}(B_R)}\preceq \|\Xi(\cdot;y)\|_{W^{2,p}(\nu_{*}^y)}e^{R^2/(p\sigma_*^2)},\quad \mbox{for all }
R\geq0,\, y\in\bbR^2.
\end{equation}
%
%
Due to the Sobolev embdedding,  see e.g. Theorem 7.10,   p. 155 of 
\cite{Gilbar}, space $W^{2,p}(B_R)$ can be embeded into $C^1(B_R)$, provided that $p>d$. In consequence there exists $C>0$ such that
\begin{equation}
\label{070709}
\|\Xi(\cdot;y)\|_{1,\infty}^{(R)}\leq C(R+1)^{2-d/p}\|\Xi(\cdot;y)\|_{W^{2,p}(B_R)} \quad\mbox { for all } y\in\bbR^2,\,R>0.
\end{equation}
From \eqref{Proff}, \eqref{070709} and \eqref{jazda} we conclude that  there exists $C>0$ such that
\begin{equation}
\label{070709a}
\|\Xi(\cdot;y)\|_{1,\infty}^{(R)}\leq C(R+1)^{2-d/p}e^{R^2/(p\sigma_*^2)} \quad\mbox { for all } y\in\bbR^2,\,R>0.
\end{equation}
Choosing  $p>2/(C_*\sigma_*^2)$, we conclude that for some $R_0$ 
$$
(R+1)^{2-d/p}e^{R^2/(p\sigma_*^2)}\le e^{C_*R^2} \quad\mbox { for all }R\ge R_0.
$$
Increasing suitably the constant $C>0$, if necessary, and recalling \eqref{010709} we conclude 
that
$$
\sup_{y\in\mbR^2}\|\Xi(\cdot;y)\|_{1,\infty}^{(R)}\leq
Ce^{C_*R^2}\quad\mbox { for all }R.
$$
The proof of the bounds on the respective norms of  $\nabla_y\Xi(\cdot;y)$ and  $\nabla_y^2\Xi(\cdot;y)$ can be done analogously,
thus
\eqref{biala} follows.
\end{proof}
%
%
%
%
%
%
%

\section{Bounds on the moments of  suprema of  some Gaussian processes}

\label{sec12.1}

Let  $I$ be an arbitrary set. We say that a field $\left(\bA(t,z)\right)_{
(t,z)\in [0,+\infty)\times I}$ is stationary in the $t$-variable  if for any $h\ge0$ the laws  of the field  and that of
$\left(\bA(t+h,z)\right)_{ (t,z)\in [0,+\infty)\times I}$ are identical.
\begin{Propozycja}\label{Gaussowskie}
Let $I$ be a compact metric space and $N$ some natural number. Assume that
$\left(\bA(t,z)\right)_{
(t,z)\in [0,+\infty)\times I}$ is  an $\mbR^{N}$-valued, Gaussian and  stationary in the $t$-variable  random field  with continuous realizations. Then for any $\gamma\in
(0,1),T>1,$ there exist $C,C'>0$ such that
\begin{equation}
\label{080709}
\bbE\left\{\sup_{t\in[0,T],z\in
I}\exp\left\{C\left|\bA\left(\frac{t}{\ep},z\right)\right|^2\right\}\right\}\leq
C'\ep^{-\gamma},\quad \ep\in(0,1].
\end{equation}
\end{Propozycja}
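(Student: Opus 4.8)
The plan is to reduce the supremum of the $\ep$-compressed field over $[0,T]$ to a supremum of the original field over a time window of length $\sim 1/\ep$, and then to combine a union bound over unit subintervals with Gaussian concentration for the supremum over one such subinterval. After subtracting the mean (which, by stationarity in $t$, is a continuous, hence bounded, function of $z$ alone, and costs only a multiplicative constant), we may assume $\bA$ is centered. By stationarity in $t$,
$$
\sup_{t\in[0,T],\,z\in I}\left|\bA\Big(\frac t\ep,z\Big)\right|=\sup_{t\in[0,T/\ep],\,z\in I}\left|\bA(t,z)\right|=:M_\ep,\qquad \ep\in(0,1].
$$
Set $n:=\lceil T/\ep\rceil$ and, for $0\le k\le n-1$, let $M_k:=\sup_{t\in[k,k+1],\,z\in I}|\bA(t,z)|$; since $[0,T/\ep]\subset[0,n]$ we have $M_\ep\le\max_{0\le k\le n-1}M_k$. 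Each $M_k$ is measurable (continuity of realizations and separability of $[k,k+1]\times I$ allow the supremum to be taken over a countable dense subset), and by stationarity in $t$ all the $M_k$ share the law of $M_0$, which is the norm of a centered Gaussian random element of the separable Banach space $C([0,1]\times I;\mbR^N)$.

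The key input is Theorem \ref{BFT}: since $M_0<+\infty$ a.s., the Gaussian measure of large balls tends to $1$, so its hypothesis holds with $R$ large and $\lambda$ small, yielding a constant $\lambda_0>0$ (depending only on the law of $\bA$ restricted to $[0,1]\times I$) with $K:=\bbE\exp\{\lambda_0 M_0^2\}<+\infty$. By Markov's inequality and stationarity, $\bbP(M_k>u)\le Ke^{-\lambda_0u^2}$ for all $u\ge0$ and all $k$, whence the union bound
$$
\bbP(M_\ep>u)\le\min\big(1,\,nKe^{-\lambda_0u^2}\big),\qquad u\ge0.
$$

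Now fix $\gamma\in(0,1)$ and put $C:=\gamma\lambda_0$, so that $0<C<\lambda_0$. Writing $e^{Cx^2}=1+\int_0^x 2Cue^{Cu^2}\,du$ for $x\ge0$, applying Tonelli's theorem, and splitting the $u$-integral at the point $u_1$ with $nKe^{-\lambda_0u_1^2}=1$, i.e. $u_1^2=\lambda_0^{-1}\log(nK)$ (note $nK\ge1$), I would estimate
\begin{align*}
\bbE e^{CM_\ep^2}&\le 1+\int_0^{u_1}2Cue^{Cu^2}\,du+nK\int_{u_1}^{\infty}2Cue^{-(\lambda_0-C)u^2}\,du\\
&=(nK)^{C/\lambda_0}+\frac{C}{\lambda_0-C}\,(nK)^{C/\lambda_0}=\frac{\lambda_0}{\lambda_0-C}\,(nK)^{C/\lambda_0}.
\end{align*}
Since $C/\lambda_0=\gamma$, the right-hand side equals $(1-\gamma)^{-1}(nK)^{\gamma}$; because $n=\lceil T/\ep\rceil\le(T+1)/\ep$ for $\ep\in(0,1]$, this is at most $C'\ep^{-\gamma}$ with $C':=(1-\gamma)^{-1}\big(K(T+1)\big)^{\gamma}$ (for $\ep$ in a compact set bounded away from $0$ the bound is trivial, $M_\ep$ being a supremum over a bounded window). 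This gives \eqref{080709}. The only genuinely delicate point is forcing the final power of $\ep$ below $1$: this is exactly why $C$ must be taken small relative to $\lambda_0$, the Gaussian decay rate of the supremum over a unit window, so that the number $n\sim1/\ep$ of subintervals enters only through $n^{\gamma}$; the remaining ingredients (measurability of the suprema, reduction to the centered case, checking the hypothesis of Theorem \ref{BFT}, the elementary integral bounds) are routine.
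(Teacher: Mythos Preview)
Your proof is correct and follows essentially the same strategy as the paper: Fernique's theorem on the unit time window, decomposition of $[0,T/\ep]$ into $\sim 1/\ep$ unit subintervals, and stationarity to make all subinterval suprema identically distributed. The only cosmetic difference is the final aggregation step --- the paper uses the moment bound $\bbE\max_i\mathcal{S}_i\le(\sum_i\bbE\mathcal{S}_i^{1/\gamma})^{\gamma}$ (with $\mathcal{S}_i=e^{CM_i^2}$) in place of your tail union bound plus layer-cake integral, but both routes are standard and deliver the same $\ep^{-\gamma}$.
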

\begin{proof}
Consider the Banach space $E:=C([0,1]\times I)$ with the standard supremum norm $\|\cdot\|_{E}$. From the Borell-Fernique-Talagrand  theorem, see  e.g. Theorem 2.6, p. 37 of \cite{daza},  there exists $\lambda_0>0$ such that for $0<\lambda<\lambda_0$ we have
\begin{equation}
\label{BFT}
\bbE\exp\left\{\lambda\left\|\bA(\cdot)\right\|^2_E\right\}<+\infty.
\end{equation}
Choose any $C\in(0,\gamma\lambda_0).$
Consider the random variables
$$
{\cal S}_i:=\exp\left\{C\sup_{(t,z)\in[i,i+1]\times
I}\left|\bA\left(t,z\right)\right|^2\right\},\quad
i=0,\dots,T(\eps):=\left[\frac{T}{\ep}\right]+1,
$$
where $[x]$ denotes the largest integer that is less than, or equal to $x\in\bbR$. Thanks to the stationarity in $t$ of the field $\bA$ the laws of all these random variables are identical.
Let $\la:=C/\ga$.  From  \eqref{BFT} we  have 
\begin{equation}
\label{100709}
\bbE
{\cal S}_0^{1/\gamma}\le \bbE\exp\left\{\lambda\left\|\bA(\cdot)\right\|^2_E\right\}<+\infty. 
\end{equation}
The left hand side of \eqref{080709} can be estimated by
from above by
\begin{align*}
&\bbE\left[\max_{i=0,\dots T(\eps)}{\cal S}_i\right]\leq\left\{\bbE\max_{i=0,\dots,
T(\eps)}{\cal S}_i^{1/\gamma}\right\}^{\gamma}\leq\left(\bbE
\sum_{i=0}^{T(\eps)}{\cal S}_i^{1/\gamma}\right)^{\gamma}\leq
(2T)^{\gamma}\ep^{-\gamma}\left(\bbE {\cal S}_0^{1/\gamma}\right)^{\gamma}.
\end{align*}
and \eqref{080709} follows.
\end{proof}

\bigskip

\begin{corollary}\label{uw1} Suppose that $\left(\fa(t,y)\right)_{(t,y)\in\bbR^{3}}$ is the field defined in \eqref{roz}. 
For any $\gamma\in(0,1)$ and $ T>0$ there exist $C,C'>0$
such that
\begin{equation}\label{gausnier}
\sum_{m=0}^2\bbE\left\{\sup_{t\in[0,T]}\sup_{y\in\mbR^d}\exp\left\{C\left|\nabla_y^m\fa\left(\frac{t}{\ep^2},y\right)\right|^2\right\}\right\}\leq
C'\ep^{-\gamma},\quad \ep\in(0,1].
\end{equation}
\end{corollary}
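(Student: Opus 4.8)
The plan is to remove the non‑compact parameter $y\in\mbR^d$ from the left‑hand side of \eqref{gausnier} by passing to a parameter ranging over a compact set, after which Proposition \ref{Gaussowskie} applies directly. The point is that $y$ enters the realizations of $\fa(t,y)$ and of its $y$‑derivatives (up to order $2$) only through the coefficients $\alpha_\jj(y),\sigma_\jj(y)$ and their first and second derivatives, which are bounded by \eqref{gamma} and the $C^2_b$‑assumption, while $\alpha_\jj$ is moreover bounded below by $\gamma_0>0$.

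First I would differentiate \eqref{roz} in $y$ (the first‑order computation is \eqref{calkia}, the second‑order one being entirely analogous) to obtain, for every $(t,y)$, a pointwise bound
\begin{equation*}
\sum_{m=0}^2\big|\nabla_y^m\fa(t,y)\big|^2\ \le\ \kappa\sum_{\jj\in Z}\sum_{\ell=0}^2\Big(\big|J^a_{\jj,\ell}(t;\alpha_\jj(y))\big|^2+\big|J^b_{\jj,\ell}(t;\alpha_\jj(y))\big|^2\Big),
\end{equation*}
where, for $\alpha\ge\gamma_0$, $J^a_{\jj,\ell}(t;\alpha):=\int_{-\infty}^t(t-s)^\ell e^{-\alpha(t-s)}\,dw_{\jj,a}(s)$, $J^b_{\jj,\ell}$ is defined analogously with $w_{\jj,b}$ in place of $w_{\jj,a}$, and $\kappa>0$ depends only on $\sigma_*,\gamma_0$ and the $C^2_b$‑norms of the $\alpha_\jj,\sigma_\jj$. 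The weights $(t-s)^\ell$ with $\ell=1,2$ come precisely from differentiating $e^{-\alpha_\jj(y)(t-s)}$ once or twice in $y$, and all the deterministic coefficients multiplying these integrals — namely $\sqrt{2\alpha_\jj}\,\sigma_\jj$, the first and second $y$‑derivatives of $\sigma_\jj,\alpha_\jj$, and powers of $\alpha_\jj^{-1}$ — are uniformly bounded.

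Next I would set $I:=\prod_{\jj\in Z}[\gamma_0,1/\gamma_0]$, a compact metric space, and let $\bA$ be the $\mbR^{6S}$‑valued field on $[0,+\infty)\times I$ whose components are the $J^a_{\jj,\ell}(t;\alpha_\jj)$ and $J^b_{\jj,\ell}(t;\alpha_\jj)$, $\jj\in Z$, $\ell=0,1,2$, where $(\alpha_\jj)_{\jj\in Z}$ denotes the running point of $I$. The field $\bA$ is Gaussian; it is stationary in $t$, each component being a stochastic integral against the increments of a Brownian motion with a kernel depending on $s$ only through $t-s$; and, for parameters in $I$, its covariance is a jointly continuous (indeed locally Lipschitz) function of the time and parameter arguments, so $\bA$ admits a modification with continuous realizations by the Kolmogorov--Chentsov criterion, Gaussianity allowing one to take an arbitrarily high moment there. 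Since $(\alpha_\jj(y))_{\jj\in Z}\in I$ for every $y$, the pointwise bound above yields
\begin{equation*}
\sum_{m=0}^2\big|\nabla_y^m\fa(t,y)\big|^2\ \le\ \kappa\sup_{z\in I}\big|\bA(t;z)\big|^2\qquad\text{for all }(t,y).
\end{equation*}

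To conclude, using $\sum_{m=0}^2\exp\{Cx_m\}\le 3\exp\{C(x_0+x_1+x_2)\}$ together with the last display, for every $\ep\in(0,1]$ and $T':=\max(T,2)>1$,
\begin{equation*}
\sum_{m=0}^2\bbE\Big\{\sup_{t\in[0,T]}\sup_{y\in\mbR^d}\exp\big\{C|\nabla_y^m\fa(t/\ep^2,y)|^2\big\}\Big\}\ \le\ 3\,\bbE\Big\{\sup_{t\in[0,T'],\,z\in I}\exp\big\{C\kappa|\bA(t/\ep^2;z)|^2\big\}\Big\}.
\end{equation*}
I would then apply Proposition \ref{Gaussowskie} to $\bA$ on the compact set $I$, with $T'$, with $\gamma/2\in(0,1)$ in place of its parameter $\gamma$, and with its small parameter set equal to $\ep^2\in(0,1]$: this provides $C_1,C_1'>0$ such that, taking $C:=C_1/\kappa$, the right‑hand side above is at most $3C_1'(\ep^2)^{-\gamma/2}=3C_1'\ep^{-\gamma}$, so \eqref{gausnier} holds with $C'=3C_1'$. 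The only genuine obstacle is the non‑compactness of the $y$‑range, handled by the reparametrization $y\rightsquigarrow(\alpha_\jj(y))_{\jj\in Z}\in I$; the remaining steps — the derivative bound extending \eqref{calkia}, the stationarity and path‑continuity of $\bA$, and the harmless rescalings $\ep\mapsto\ep^2$ and $\gamma\mapsto\gamma/2$ — are routine.
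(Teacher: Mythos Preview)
Your proof is correct and follows essentially the same strategy as the paper: replace the non-compact parameter $y$ by the values of the bounded coefficients $\alpha_\jj(y)$ ranging over a compact set, verify that the resulting field is Gaussian, stationary in $t$ and has continuous realizations, and then invoke Proposition~\ref{Gaussowskie}. The paper's version parametrizes additionally by the prefactor $z'=\sqrt{2\alpha_\jj}\,\sigma_\jj$ rather than absorbing it into the constant $\kappa$, and it leaves the $m=1,2$ cases and the $\ep\mapsto\ep^2$ rescaling implicit; your treatment of these points (the $(t-s)^\ell$ integrals, the substitution $\gamma\mapsto\gamma/2$, and the passage to $T'>1$) is more explicit but not materially different.
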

\begin{proof} Indeed let (cf \eqref{gamma} and \eqref{roz})
\begin{align}\label{duzeA}\begin{aligned}
&A_i(t,z,z')=z'\int_{-\infty}^te^{-z(t-s)}dw_{i,a}(s),\\
&B_i(t,z,z')=z'\int_{-\infty}^te^{-z(t-s)}dw_{i,b}(s),\quad
(z,z')\in I,
\end{aligned}\end{align} where
$I:=[\gamma_0,\gamma_0^{-1}]\times\left[\sqrt{2\ga_0}/\sigma_*,\sqrt{2}/(\sqrt{\ga_0}\sigma_*)\right]$, the Brownian motions $w_{i,a}(s),
w_{i,b}(s)$ are as in $\eqref{Ornstein}$ for $i=1,\dots,N$.
The conclusion of the corollary concerning the term of \eqref{gausnier} corresponding to $m=0$ follows from an application of  Proposition \ref{Gaussowskie} to the field $\bA(t,z,z')=:(A_i(t,z,z'),B_i(t,z,z'))_{i=1,\dots,N}$.
Since the functions $\alpha_i,\sigma_i$  appearing in the definitions of the respective fields $a_i(t,y)$ and $b_i(t,y)$ are of 
$C^2_b(\mbR^2)$ class of regularity  we can repeat the above argument for the terms corresponding to $m=1,2$ as well. 
\end{proof}



\section{Application  to estimates of moments of suprema related to the corrector along the tracer path}

\label{sec8}

Recall that  $\Xi^{(\ep)}(t)=\tilde\Xi(t,\bar x_\eps(t),x_\eps(t))$, where  $\tilde\Xi(t,x,y)=\Xi(\tau_x\fa(t),y)$ and $\bar x_\eps(t):=\eps^{-1}x_\eps(t)$.
The processes $\nabla_{\fa}\Xi^{(\ep)}(t)$ and $\nabla_y\Xi^{(\ep)}(t)$ are formed similarly, using $\nabla_{\fa}\Xi$ and $\nabla_{y}\Xi$ instead of $\Xi$.
\begin{lem}\label{SUPREMUM}
Under the assumptions made in Section $\ref{CP}$ for any $T,r>0$ we have
\begin{align}\label{armed}\begin{aligned}
&\lim_{\ep\to0}\ep\bbE\left\{
\sup_{t\in[0,T]}\bigg[\left|\Xi^{(\ep)}(t)\right|^r+\left|\nabla_{\fa}\Xi^{(\ep)}(t)\right|^r+\left|\nabla_y\Xi^{(\ep)}(t)\right|^r\bigg]\right\}=0.
\end{aligned}\end{align}
\end{lem}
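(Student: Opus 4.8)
The plan is to reduce the statement to the two ingredients already available: the pointwise-in-$y$ growth bound on the corrector and its $y$-derivatives from Proposition \ref{Prop}, namely $\sup_{y}\sum_{m=0}^2\|\nabla_y^m\Xi(\cdot;y)\|_{1,\infty}^{(R)}\le Ce^{C_*R^2}$ for an arbitrarily small $C_*>0$, and the Gaussian supremum moment bounds of Section \ref{sec12.1} (Proposition \ref{Gaussowskie} and Corollary \ref{uw1}), which give $\bbE\sup_{t\in[0,T]}\sup_{y}\exp\{C|\nabla_y^m\fa(t/\ep^2,y)|^2\}\preceq \ep^{-\gamma}$ for any $\gamma\in(0,1)$. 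First I would observe that, by the defining convention \eqref{tild} and \eqref{ozn}, $\Xi^{(\ep)}(t)=\Xi(\tau_{\bar x_\ep(t)}\fa(t/\ep^2,x_\ep(t)),x_\ep(t))$, and since $\tau_x$ is an orthogonal (rotation) map on each $\R^2$-block, $|\tau_{\bar x_\ep(t)}\fa(t/\ep^2,x_\ep(t))|=|\fa(t/\ep^2,x_\ep(t))|$. Hence, abbreviating $\fa_\ep(t):=\fa(t/\ep^2,x_\ep(t))$ and $R_\ep(t):=|\fa_\ep(t)|$, we get from Proposition \ref{Prop} that
\begin{equation}
\label{sup-plan-1}
|\Xi^{(\ep)}(t)|+|\nabla_{\fa}\Xi^{(\ep)}(t)|\le \|\Xi(\cdot;x_\ep(t))\|_{1,\infty}^{(R_\ep(t))}\le Ce^{C_*R_\ep(t)^2},
\end{equation}
and similarly $|\nabla_y\Xi^{(\ep)}(t)|\le C e^{C_*R_\ep(t)^2}$ (the gradient in $y$ first picks up the explicit $y$-dependence of $\Xi$, controlled by the $m=1$ term of \eqref{biala}, and then the $\fa$-gradient composed with $\partial_y\fa$, which is handled below). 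The crucial point is that $R_\ep(t)$ is dominated by the supremum of a Gaussian field: $R_\ep(t)=|\fa(t/\ep^2,x_\ep(t))|\le \sup_{s\in[0,T],\,y\in\mbR^d}|\fa(s/\ep^2,y)|=:G_\ep$, exactly as in \eqref{gaus}.

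Next I would combine \eqref{sup-plan-1} with the moment bound. Fix $r>0$ and pick $C_*>0$ small enough that $rC_*<C$ for the constant $C$ furnished by Corollary \ref{uw1} at some chosen $\gamma\in(0,1)$. Then
\begin{equation}
\label{sup-plan-2}
\ep\,\bbE\left[\sup_{t\in[0,T]}|\Xi^{(\ep)}(t)|^r\right]\le C^r\ep\,\bbE\left[e^{rC_*G_\ep^2}\right]\le C^r\ep\,\bbE\left[\sup_{s\in[0,T],y}e^{C|\fa(s/\ep^2,y)|^2}\right]\preceq \ep\cdot\ep^{-\gamma}=\ep^{1-\gamma}\xrightarrow[\ep\to0]{}0,
\end{equation}
since $1-\gamma>0$. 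The same estimate works verbatim for $|\nabla_{\fa}\Xi^{(\ep)}(t)|^r$. For $|\nabla_y\Xi^{(\ep)}(t)|^r$ one writes the chain rule: differentiating $\tilde\Xi(t,x,y)=\Xi(\tau_x\fa(t;y),y)$ in $y$ produces $(\nabla_y\Xi)(\tau_x\fa,y)$ plus $(\nabla_\fa\Xi)(\tau_x\fa,y)\cdot\tau_x\partial_y\fa(t;y)$; the first term is bounded by $Ce^{C_*R_\ep(t)^2}$ via the $m=1$ part of \eqref{biala}, and the second by $Ce^{C_*R_\ep(t)^2}\,|\partial_y\fa(t/\ep^2,x_\ep(t))|$. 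Using the elementary bound $xe^{C_*x^2}\le e^{(C_*+1)x^2}$ (or just $ab\le \tfrac12(a^2+b^2)$ after raising to the power $r$) and enlarging the Gaussian supremum to include $\nabla_y\fa$ as well, i.e. replacing $G_\ep$ by $\sup_{s\in[0,T],y}(|\fa(s/\ep^2,y)|+|\nabla_y\fa(s/\ep^2,y)|)$, the $m=0,1$ terms of \eqref{gausnier} in Corollary \ref{uw1} again yield a bound of order $\ep^{-\gamma}$ on the relevant exponential moment, provided the constants are chosen with $rC_*$ (together with the extra factor from the $xe^{C_*x^2}$ step) strictly below the admissible $C$; choosing $C_*$ small enough and $\gamma$ close enough to $0$, we obtain $\ep\,\bbE[\sup_{[0,T]}|\nabla_y\Xi^{(\ep)}(t)|^r]\preceq \ep^{1-\gamma}\to0$. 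Summing the three contributions gives \eqref{armed}.

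I do not expect any serious obstacle here; the lemma is essentially a bookkeeping combination of Proposition \ref{Prop} and Section \ref{sec12.1}. The only point requiring a little care is the order of quantifiers in choosing constants: one must fix $r$, then choose $C_*$ (and, for the $y$-derivative term, absorb the polynomial-times-exponential into a slightly larger exponent) so that the resulting exponential moment falls within the range $(0,\gamma\lambda_0)$ of Corollary \ref{uw1}/Proposition \ref{Gaussowskie}, and only then is $\gamma\in(0,1)$ selected so that $\ep^{1-\gamma}\to0$. A secondary technical point is making sure the measurability/continuity hypotheses of Proposition \ref{Gaussowskie} apply to the fields $\left(\fa(t,y)\right)$ and $\left(\nabla_y\fa(t,y)\right)$ — this is exactly the content of Corollary \ref{uw1}, which we invoke directly, and the compact-parameter-space requirement there is met because the coefficients $\alpha_i,\sigma_i$ (and their first two derivatives) take values in a compact set, as in \eqref{duzeA}.
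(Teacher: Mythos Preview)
Your approach is essentially the same as the paper's: bound the corrector pointwise by $Ce^{C_*|\fa|^2}$ via Proposition \ref{Prop}, use the isometry $|\tau_x\fa|=|\fa|$, dominate by $\sup_{t,y}|\fa(t/\ep^2,y)|$, and invoke Corollary \ref{uw1} to get $\ep\cdot\ep^{-\gamma}\to0$.

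One remark on notation: you slightly overwork the $\nabla_y\Xi^{(\ep)}$ term. In the paper's convention (stated just before the lemma), $\nabla_y\Xi^{(\ep)}(t)$ is the process obtained by first taking the partial derivative $\nabla_y\Xi(\fa,y)$ of the function and \emph{then} applying \eqref{tild}--\eqref{ozn}; it is not the full $y$-derivative of $\tilde\Xi(t,x,y)$. Hence no chain-rule term $(\nabla_\fa\Xi)\cdot\partial_y\fa$ arises, and the argument for $\nabla_y\Xi^{(\ep)}$ is literally identical to that for $\Xi^{(\ep)}$, using the $m=1$ case of \eqref{biala}. Your detour through the chain rule and the $m=1$ part of \eqref{gausnier} is harmless but unnecessary.
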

\begin{proof}  We conduct the proof for the process $\Xi^{(\ep)}(t)$. For the other processes appearing in \eqref{armed} the argument  is similar. From  Proposition
\ref{Prop} we know that for any  $r>0$ and constant $C_*$  there exists constant $C>0$ such that for all
$\ep\in(0,1),\ t\geq0$
\begin{align}\label{cytowane}\begin{aligned}
&\left|\Xi^{(\ep)}\left(t\right)\right|^r\leq
C\exp\left\{C_*\left|\tau_{\bar x_\ep(t)}\fa\left(\frac{t}{\ep^2};x_\ep(t)\right)\right|^2\right\}\leq
C\sup_{y\in\mbR^d}\exp\left\{C_*\left|\fa\left(\frac{t}{\ep^2};y\right)\right|^2\right\}.
\end{aligned}\end{align} 
Thanks to Corollary \ref{uw1} we can choose $C_*$ in such a way that  for some $\gamma\in(0,1)$ and $C'>0$ the supremum of the right hand side over $T\in[0,T]$ of \eqref{cytowane} can be estimated by $C'\eps^{-\ga}.$ Thus,
$$
\lim_{\ep\to0}\ep\bbE\left[
\sup_{t\in[0,T]}\left|\Xi^{(\ep)}(t)\right|^r\right]=0.
$$
\end{proof}


\begin{lem}\label{ar}
For any $ t,r\geq0$ we have
\begin{equation}\label{armed2}
\limsup_{\ep\to0}\bbE\left\{
\left|\Xi^{(\ep)}(t)\right|^r+\left|\nabla_\fa\Xi^{(\ep)}(t)\right|^r+\left|\Xi_y^{(\ep)}(t)\right|^r\right\}<+\infty.
\end{equation}
\end{lem}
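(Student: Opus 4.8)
The plan is to bound each of the three random variables in \eqref{armed2} by a single deterministic function of $\sup_{y}|\fa(t/\ep^2;y)|$ whose law does not change with $\ep$, and then invoke the exponential integrability already available for that supremum.

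First I would record the pointwise form of Proposition \ref{Prop}. Fixing $C_*>0$ (to be pinned down below in terms of $r$) and applying \eqref{biala} with $R=|\fa|$ gives a constant $C=C(C_*)$ with
$$
|\Xi(\fa;y)|+|\nabla_\fa\Xi(\fa;y)|+|\nabla_y\Xi(\fa;y)|\le C e^{C_*|\fa|^2},\qquad (\fa,y)\in\bbR^{2N+2},
$$
since the norm $\|\nabla_y^m\Xi(\cdot;y)\|_{1,\infty}^{(R)}$ bounds the supremum over $|\fa|\le R$ of both the function and its $\fa$-gradient. Next, from \eqref{pz} the transformation $\tau_x$ acts on each pair $(a_\jj,b_\jj)$ by a planar rotation, so $|\tau_x\fa|=|\fa|$ for every $x$; hence, by the definitions \eqref{tild} and \eqref{ozn} of $\Xi^{(\ep)}(t)$, $\nabla_\fa\Xi^{(\ep)}(t)$ and $\Xi_y^{(\ep)}(t)$ (each obtained by substituting the corresponding function into $\Xi(\tau_{\bar x_\ep(t)}\fa(t/\ep^2;x_\ep(t)),x_\ep(t))$), we get for all $t\ge0$ and $\ep\in(0,1)$
$$
|\Xi^{(\ep)}(t)|^r+|\nabla_\fa\Xi^{(\ep)}(t)|^r+|\Xi_y^{(\ep)}(t)|^r\le 3C^r\exp\Big\{rC_*\big|\fa(t/\ep^2;x_\ep(t))\big|^2\Big\}\le 3C^r\sup_{y\in\mbR^d}\exp\Big\{rC_*\big|\fa(t/\ep^2;y)\big|^2\Big\},
$$
where the last step only uses that $x_\ep(t)\in\mbR^d$.

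The key point is that the field $\big(\fa(s;\cdot)\big)_{s\ge0}$ is stationary in the time variable $s$: this follows from the representation \eqref{roz} after the change of variables $s\mapsto s+h$ in the Wiener integrals, together with the independence of the increments of the driving Brownian motions. Consequently, for each fixed $t$, the random variable $\sup_{y}\exp\{rC_*|\fa(t/\ep^2;y)|^2\}$ has one and the same law for all $\ep>0$, namely that of $\sup_{y}\exp\{rC_*|\fa(0;y)|^2\}$. Finally I would choose $C_*$ small enough that $rC_*$ lies below the exponential integrability threshold furnished by Corollary \ref{uw1} (equivalently Proposition \ref{Gaussowskie}, which applies because $\alpha_\jj(y),\sigma_\jj(y)$ range over compact sets, so $\fa(t;\cdot)$ is dominated by a Gaussian field over a compact parameter set with continuous realizations); then $\bbE\sup_{y}\exp\{rC_*|\fa(0;y)|^2\}<+\infty$. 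Combining this with the displayed bound yields
$$
\limsup_{\ep\to0}\bbE\Big\{|\Xi^{(\ep)}(t)|^r+|\nabla_\fa\Xi^{(\ep)}(t)|^r+|\Xi_y^{(\ep)}(t)|^r\Big\}\le 3C^r\,\bbE\sup_{y\in\mbR^d}\exp\Big\{rC_*|\fa(0;y)|^2\Big\}<+\infty.
$$

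There is no substantive obstacle here; the lemma is much softer than Lemma \ref{SUPREMUM}. The only point requiring a little care is the bookkeeping matching the constant $C_*$ from Proposition \ref{Prop} against $r$ and against the Gaussian integrability constant — exactly the matching already performed in the proof of Lemma \ref{SUPREMUM}. The simplification here is that, with no supremum over $t\in[0,T]$ and no prefactor $\ep$, time stationarity alone closes the argument and the factors $\ep^{-\gamma}$ appearing in Corollary \ref{uw1} never enter.
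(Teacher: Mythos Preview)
Your proof is correct and follows essentially the same route as the paper: bound $\Xi^{(\ep)}(t)$ (and its derivatives) pointwise via Proposition \ref{Prop}, use the rotation invariance $|\tau_x\fa|=|\fa|$, take the supremum over $y$, reduce to $t=0$ by time stationarity of the Ornstein--Uhlenbeck field, and conclude by Gaussian exponential integrability over a compact parameter set. The only cosmetic difference is that the paper verifies the last step directly via Adler's tail bound for the auxiliary field $A_i(z)$ on $[\gamma_0,1/\gamma_0]$, whereas you appeal to Corollary \ref{uw1}/Proposition \ref{Gaussowskie} (i.e.\ Borell--Fernique--Talagrand); both yield the same conclusion.
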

\begin{proof}
Again, we present the argument only for $\Xi^{(\ep)}(t)$. 
Using estimate \eqref{cytowane} we can see that the conclusion of the lemma follows, provided we can show that for some $C_*>0$
\begin{equation}
\label{010710}
\limsup_{\ep\to0}\bbE\left\{\sup_{y}\exp\left\{C_*\left|\fa\left(\frac{t}{\ep^2};y\right)\right|^2\right\}\right\}<+\infty.
\end{equation}
From time stationarity of the Ornstein-Uhlenbeck processes we conclude that
\begin{align}
&\bbE\left\{\sup_{y}\exp\left\{C_*\left|\fa\left(\frac{t}{\ep^2};y\right)\right|^2\right\}\right\}=\bbE\left\{\exp\left\{C_*\sup_{y}\left|\fa\left(0;y\right)\right|^2\right\}\right\}.\nonumber
\end{align}
Recall that from \eqref{roz} we have
$$
a_i(0;y)=\sqrt{2\alpha_i(y)}\sigma_i(y)\int_{-\infty}^0e^{\alpha_i(y)s}dw_{i,a}(s),\quad
i=1,\dots,N,\ y\in\mbR^2.
$$
Therefore (cf \eqref{gamma})
$$
\sup_y|a_i(0;y)|\leq
\frac1\sigma_*\sup_{z\in[\gamma_0,\gamma_0^{-1}]}|A_i(z)|,
$$
where
$$
A_i(z):=\sqrt{2z}\int_{-\infty}^0e^{z s}dw_{i,a}(s),\quad z\in[\gamma_0,\gamma_0^{-1}].
$$
Field $A_i(\cdot)$ is Gaussian with covariance function
$$
R(z,z')=\frac{2\sqrt{zz'}}{z+z'},\quad z,z'\in[\gamma_0,\gamma_0^{-1}].
$$
Using results of \cite{Adler}, (see Theorem 5.2, p. 120) we know that there exists constants
 $C,K,\ \lambda_0>0$ such that
$$
\bbP\left(\sup_{z\in[\gamma_0,1/\gamma_0]}|A_i(z)|\geq\lambda\right)\leq
C\lambda^K\exp\left\{-\frac{\lambda^2}{2R_*}\right\},\quad
i=1,\dots,N,\ \lambda>\lambda_0,
$$
where
$
R_*:=\sup\left\{R(z,z'),z,z'\in[\gamma_0,1/\gamma_0]\right\}.
$
Therefore for a sufficiently small $C_*>0$ formula \eqref{010710} holds. 
\end{proof}

\commentout{

\textcolor{red}{
\section{Proof of formula (\ref{bel})}
{\em to be commented}}

\textcolor{red}{We only prove the formula in question under an
  additional assumption that $\ff\in {\cal C}_0$ (see \eqref{C0}). The general
  result can be easily obtained by an approximation. Let
  $u(t,\fa):=P_t^y\ff(\fa)$. It satisfies the Kolmogorov equation
\begin{equation}
\label{bela}
\left\{
\begin{array}{ll}
\partial_t u(t,\fa)={\cal L}_yu(t,\fa),& t>0,\\
u(0,\fa)=\ff(\fa).
\end{array}
\right.
\end{equation}
Consider the proces
$$
Y_s:=u(t-s, \tilde
\fa^{\frak a}(s,y)) Z_s,
$$
with
$$
Z_s:=\int_0^s\Sigma^{-1}D_{b_j}\tilde
\fa^{\frak a}(r,y)\cdot dw(r).
$$
Using It\^o formula we obtain
\begin{align*}
&dY_s=\left\{Z_s \left[-(\partial_t u)(t-s, \tilde
\fa^{\frak a}(s,y))+{\cal L}_y u(t-s, \tilde
\fa^{\frak a}(s,y))\right]+(\nabla_\fa u)(t-s, \tilde
\fa^{\frak a}(s,y))\cdot D_{b_j}\tilde
\fa^{\frak a}(r,y) \right\}ds\\
&
+\left\{Z_s\Sigma(\nabla_\fa u)(t-s, \tilde
\fa^{\frak a}(s,y))+u(t-s, \tilde
\fa^{\frak a}(s,y))\Sigma^{-1}D_{b_j}\tilde
\fa^{\frak a}(s,y)\right\}\cdot dw(s).
\end{align*}
From the assumptions made it follows that $u(t,\cdot)\in {\cal C}_0$
for all $t\ge0$, therefore 
$$
{\frak M}_s:=\int_0^s\left\{Z_r\Sigma(\nabla_\fa u)(t-r, \tilde
\fa^{\frak a}(r,y))+u(t-r, \tilde
\fa^{\frak a}(r,y))\Sigma^{-1}D_{b_j}\tilde
\fa^{\frak a}(r,y)\right\}\cdot dw(r),\quad s\in[0,t]
$$
is a square integrable, continuous trajectory martingale.}

\textcolor{red}{
Integrating from $0$ to $t$ and performing the expectation and using
identity \eqref{bela} we obtain
\begin{align*}
\bbE Y_t=\bbE Y_0+\int_0^t\bbE\left\{(\nabla_\fa u)(t-s, \tilde
\fa^{\frak a}(s,y))\cdot D_{b_j}\tilde
\fa^{\frak a}(r,y) \right\}ds,
\end{align*}
or equivalently
\begin{align*}
&\bbE\left[\ff( \tilde
\fa^{\frak a}(t,y))\int_0^t\Sigma^{-1}D_{b_j}\tilde
\fa^{\frak a}(s,y)\cdot dw(s)\right]=\int_0^t\bbE\left\{\partial_{b_j} \left[u(t-s, \tilde
\fa^{\frak a}(s,y))\right] \right\}ds\\
&
=\int_0^t \partial_{b_j}\bbE\left[u(t-s, \tilde
\fa^{\frak a}(s,y))\right] ds=\int_0^t \partial_{b_j}\left[P_s^yu(t-s,
  \fa)\right] ds\\
&
=\int_0^t \partial_{b_j}\left[P_s^yP_{t-s}^y\ff(
  \fa)\right] ds=\int_0^t \partial_{b_j}\left[P_{t}^y\ff(
  \fa)\right] ds=t\partial_{b_j}P_{t}^y\ff(
  \fa)
\end{align*}
and formula \eqref{bel} follows.}

}

\end{document}